\documentclass[a4paper, 11pt]{article}
\usepackage{amsmath}
\usepackage{amssymb,esint}
\usepackage{amscd}
\usepackage{xspace}
\usepackage{fancyhdr}
\usepackage{color}
\usepackage{authblk}
\setcounter{MaxMatrixCols}{30}
\usepackage{srcltx} 
\setlength{\oddsidemargin}{-0.05in}
\setlength{\evensidemargin}{-0.05in}
\setlength{\textwidth}{13.5cm}
\newtheorem{theorem}{Theorem}[section]

\textheight=22.15cm
\voffset=-1truecm
\hoffset=+1.1truecm

\newtheorem{definition}[theorem]{Definition}

\newtheorem{lemma}[theorem]{Lemma}

\newtheorem{remark}[theorem]{Remark}

\newenvironment{proof}[1][Proof]{\textbf{#1.} }{\hfill\rule{0.5em}{0.5em}}
{\catcode`\@=11\global\let\AddToReset=\@addtoreset
\AddToReset{equation}{section}

\AddToReset{theorem}{section}

\def\nc{\newcommand}

\def\div{\text{div}}

\nc\pa{\partial}

\nc\CC{\mathbb{C}}
\nc\RR{\mathbb{R}}
\nc\QQ{\mathbb{Q}}
\nc\ZZ{\mathbb{Z}}
\nc\NN{\mathbb{N}}

\begin{document}
\title{Lorentz-Morrey global bounds for singular quasilinear elliptic equations with measure data}

\author{M.-P. Tran\thanks{Applied Analysis Research Group, Faculty of Mathematics and Statistics, Ton Duc Thang University, Ho Chi Minh city, Vietnam; \texttt{tranminhphuong@tdtu.edu.vn}} , T.-N. Nguyen\thanks{Department of Mathematics, Ho Chi Minh City University of Education, Ho Chi Minh city, Vietnam}}

\date{}  
\maketitle
\begin{abstract}

The aim of this paper is to present the global estimate for gradient of renormalized solutions to the following quasilinear elliptic problem:
\begin{align*}
\begin{cases}
-\div(A(x,\nabla u)) &= \mu \quad \text{in} \ \ \Omega, \\
u &=0 \quad \text{on} \ \ \partial \Omega,
\end{cases}
\end{align*}
in Lorentz-Morrey spaces, where $\Omega \subset \mathbb{R}^n$ ($n \ge 2$), $\mu$ is a finite Radon measure,  $A$ is a monotone Carath\'eodory vector valued function defined on $W^{1,p}_0(\Omega)$ and the $p$-capacity uniform thickness condition is imposed on the complement of our domain $\Omega$. It is remarkable that the local gradient estimates has been proved firstly by G. Mingione in \cite{Mi3} at least for the case $2 \le p \le n$, where the idea for extending such result to global ones was also proposed in the same paper. Later, the global Lorentz-Morrey and Morrey regularities were obtained by N.C.Phuc in \cite{55Ph1} for regular case $p>2 - \frac{1}{n}$. Here in this study, we particularly restrict ourselves to the singular case $\frac{3n-2}{2n-1}<p\le 2-\frac{1}{n}$. The results are central to generalize our technique of good-$\lambda$ type bounds in previous work \cite{MP2018}, where the local gradient estimates of solution to this type of equation was obtained in the Lorentz spaces. Moreover, the proofs of most results in this paper are formulated globally up to the boundary results.

\medskip

\medskip

\medskip

\noindent 

\medskip

\noindent {\bf Keywords:} quasilinear elliptic equation; measure data; Lorentz-Morrey space; capacity uniformly thickness; global bounds, gradient estimates.

\end{abstract}   
                  
\section{Introduction}
\label{sec:intro}

Our main purpose in this paper is to establish a global gradient estimate in Lorentz-Morrey spaces of solutions (the renormalized solutions) to the following quasilinear elliptic equations with respect to the given measure datum $\mu$:
\begin{equation}
\label{eq:elliptictype}
\begin{cases}
-\div(A(x,\nabla u)) &= \mu \quad \text{in} \ \ \Omega, \\
u &=0 \quad \text{on} \ \ \partial \Omega.
\end{cases}
\end{equation}

In our study, the given domain $\Omega$ is a bounded open subset of $\mathbb{R}^n$, $n \ge 2$, and $\mu$ stands for a finite signed Radon measure in $\Omega$. The nonlinear operator $A: \mathbb{R}^n \times \mathbb{R}^n \rightarrow \mathbb{R}$ is a Carath\'eodory vector valued function (that is, $A(.,\xi)$ is measurable on $\Omega$ for every $\xi$ in $\mathbb{R}^n$, and $A(x,.)$ is continuous on $\mathbb{R}^n$ for almost every $x$ in $\Omega$) which satisfies the following growth and monotonicity conditions: for some $1<p\le n$:
\begin{align}
\label{eq:A1}
\left| A(x,\xi) \right| &\le \beta |\xi|^{p-1},
\end{align}
\begin{align}
\label{eq:A2}
\langle A(x,\xi)-A(x,\eta), \xi - \eta \rangle &\ge \alpha \left( |\xi|^2 + |\eta|^2 \right)^{\frac{p-2}{2}}|\xi - \eta|^2,
\end{align}
for every $(\xi,\eta) \in \mathbb{R}^n \times \mathbb{R}^n \setminus \{(0,0)\}$ and a.e. $x \in \mathbb{R}^n$, $\alpha$ and $\beta$ are positive constants.

In addition, in order to obtain the global bounds of solution in Lorentz-Morrey spaces, the domain $\Omega \subset \mathbb{R}^n$ is under the assumption that its complement $\mathbb{R}^n \setminus \Omega$ is uniformly $p$-capacity thick. More precise, we say that the domain $\mathbb{R}^n \setminus \Omega$ satisfies the $p$-capacity uniform thickness condition if there exist two constants $c_0,r_0>0$ such that
\begin{align}
\label{eq:capuni}
\text{cap}_p((\mathbb{R}^n \setminus \Omega) \cap \overline{B}_r(x), B_{2r}(x)) \ge c_0 \text{cap}_p(\overline{B}_r(x),B_{2r}(x)),
\end{align}
for every $x \in \mathbb{R}^n \setminus \Omega$ and $0<r \le r_0$. Here, the $p$-capacity of any compact set $K \subset \Omega$ is defined as:
\begin{align*}
\text{cap}_p(K,\Omega) = \inf \left\{ \int_\Omega{|\nabla \varphi|^p dx}: \varphi \in C_c^\infty, \varphi \ge \chi_K \right\},
\end{align*}
where $\chi_K$ is the characteristic function of $K$. This $p$-capacity density condition is stronger than the Weiner criterion in~\cite{Kilp}:
\begin{align*}
\int_0^1{\left(\frac{\text{cap}_p((\mathbb{R}^n \setminus \Omega) \cap \overline{B}_r(x), B_{2r}(x))}{\text{cap}_p(\overline{B}_r(x), B_{2r}(x))} \right)^{\frac{1}{p-1}} \frac{dr}{r}} = \infty
\end{align*} 
which characterizes regular boundary points for the Dirichlet problem for the $p$-Laplace equation. Otherwise, 
it is weaker than the Reifenberg flatness condition that was discussed in various studies \cite{BP14, BR, BW1, BW2, MP11, MP12,  ER60}. The class of domains whose complement satisfies the uniformly $p$-capacity condition is relatively large (including those with Lipschitz boundaries or satisfy a uniform corkscrew condition). The condition \eqref{eq:capuni} is still valid for balls centered outside a uniformly $p$-thick domain  and furthermore, this condition is nontrivial when $p \le n$. The definition and properties of variational capacity can be found in \cite{M2005}.

Throughout this paper, the solution to the problem \eqref{eq:elliptictype} is considered in the sense of \emph{renormalized solution}, whose definition was presented in \cite{BMMP, BGO, 11DMOP} and many references therein. More specifically, the datum measure $\mu$ is defined in $\mathfrak{M}_b(\Omega)$, the space of all Radon measures on $\Omega$ with bounded total variation. Note that, if $\mu \in \mathfrak{M}_b(\Omega)$, then the total variation of $\mu$ is bounded positive measure on $\Omega$. It is also remarkable that for every measure $\mu$ in $\mathfrak{M}_b(\Omega)$ there exists a unique pair of measures $(\mu_0, \mu_s)$, with $\mu_0$ in $\mathfrak{M}_b(\Omega)$ and $\mu_s$ in $\mathfrak{M}_s(\Omega)$, such that $\mu = \mu_0+\mu_s$, is $\mu$ is nonnegative, so are $\mu_0$ and $\mu_s$. Therefore, the measures $\mu_0$ and $\mu_s$ will be called the \emph{absolutely continuous} and the \emph{singular} part of $\mu$ with respect to the $p$-capacity. 

The quasilinear elliptic equations with measure data \eqref{eq:elliptictype} and solution regularity estimates have been widely studied in several papers in recent years. For instance, firstly by L. Boccardo et al. in \cite{BGO}, and later in different works by G. Mingione et al. \cite{KMi1, KMi2, Mi2, Mi3, Mi4} and N.C. Phuc et al. \cite{AdP1, AdP2, 55QH4, 55Ph2, 55Ph0, 55Ph1}. In \cite{Mi3}, G. Mingione firstly proposed the local estimates of solution at least for the case $2 \le p \le n$, and the extension to global estimates has also been mentioned by using maximal function. Later, some of other researching approaches have been studied for different hypotheses of domain $\Omega$, the nonlinear operator $A$ and the case of $p$. In \cite{55Ph0}, N.C. Phuc gave the global gradient estimates in the Lorentz spaces and later in \cite{55Ph1}, author also presented his study on the Lorentz-Morrey and Morrey global bounds to this type of equation, for the regular case of $2-\frac{1}{n}<p \le n$ and $\Omega$ is subject to the $p$-capacity complement thickness condition.  There have been further discussions on the global gradient estimates of solution to this equation, with different possible assumptions. For instance, authors in \cite{55QH4} studied the gradient estimate of solution in Lorentz space under the hypotheses of $\Omega$-Reifenberg domain, for $\frac{3n-2}{2n-1}<p\le 2-\frac{1}{n}$ and the nonlinearity $A$ is required to satisfy the smallness condition of BMO type. Otherwise, without the assumption of Reifenberg flat domain, the gradient estimates were presented under the weaker condition on complement domain of $\Omega$, that is the $p$-capacity uniform thickness ($p$-fat). Later, in our present work, for singular case $\frac{3n-2}{2n-1}<p\le 2-\frac{1}{n}$, the solution regularity to this quasilinear elliptic equation \eqref{eq:elliptictype} in Lorentz spaces $L^{q,s}(\Omega)$ ($q>0, 0<s \le \infty$) were given in \cite{MP2018}.

In the present paper, our work is studied following the series of works by G. Mingione (in \cite{55DuzaMing}, \cite{Duzamin2}, \cite{KMi1, KMi2}, \cite{Mi2,Mi3}), N.C. Phuc (in \cite{AdP1, 55Ph2, 55Ph0, 55Ph1}), where the global bounds of solution to \eqref{eq:elliptictype} were obtained under different hypotheses and assumptions. Herein, our main advantage here is to provide a new continuation result of solution global bounds in Lorentz-Morrey spaces for singular $p$, where the proof techniques may be generalized in the same way as our previous work in \cite{MP2018}. However, the main new contribution in this paper is that all gradient estimates are \emph{global} up to the boundary. This research paper gives us a motivation to study global $W^{\alpha,p}$ estimates ($0<\alpha<1$), that will be the subject of a forthcoming paper.

The plan of this paper is organized as follows. We present in the next section some backgrounds and main results via some theorems are stated therein. Section \ref{sec:lems} deals with some local and global comparison estimates, including interior and boundary estimates of solution. And finally in Section \ref{sec:main} we indicate how our techniques may be used to give the proofs of desired results.

\section{Main Theorems}
\label{sec:pre}
Let us firstly recall the definition of the \emph{Lorentz space} $L^{q,t}(\Omega)$ for $0<q<\infty$ and $0<t\le \infty$ (see in \cite{55Gra}). It is the set of all Lebesgue measurable functions $g$ on $\Omega$ such that:
\begin{align}
\label{eq:lorentz}
\|g\|_{L^{q,t}(\Omega)} = \left[ q \int_0^\infty{ \lambda^q\mathcal{L}^n \left( \{x \in \Omega: |g(x)|>\lambda\} \right)^{\frac{t}{q}} \frac{d\lambda}{\lambda}} \right]^{\frac{1}{t}} < +\infty,
\end{align}
as $t \neq \infty$. If $t = \infty$, the space $L^{q,t}(\Omega)$ is the usual weak-$L^q$ or Marcinkiewicz space with the following quasinorm:
\begin{align}
\|g\|_{L^{q,\infty}(\Omega)} = \sup_{\lambda>0}{\lambda \mathcal{L}^n\left(\{x \in \Omega:|g(x)|>\lambda\}\right)^{\frac{1}{q}}}.
\end{align}
In the definition above, the notation $\mathcal{L}^n(E)$ stands for the $n$-dimensional Lebesgue measure of a set $E \subset \mathbb{R}^n$. When $t=q$, the Lorentz space $L^{q,q}(\Omega)$ becomes the Lebesgue space $L^q(\Omega)$. 

Otherwise, we also give the definition of \emph{Lorentz-Morrey spaces}. A function $g \in L^{q,t}(\Omega)$ for $0<q<\infty$, $0<t \le \infty$ is said to belong to the Lorentz-Morrey functional spaces $L^{q,t;\kappa}(\Omega)$ for some $0<\kappa \le n$  if
\begin{align}
\label{eq:LMsp}
\|g\|_{L^{q,t;\kappa}(\Omega)}:=\sup_{0<\rho<diam(\Omega); x \in \Omega}{\rho^{\frac{\kappa-n}{q}}}\|g\|_{L^{q,t}(B_\rho(x)\cap\Omega)} < +\infty.
\end{align}
When $\kappa = n$ the space $L^{q,t;\kappa}(\Omega)$ is exactly the space $L^{q,t}(\Omega)$.

In this part, let us also recall and reproduce the definition of \emph{renormalized solution}, that was given in \cite{BMMP, BGO, 11DMOP} and currently in our previous paper \cite[Section 2.4]{MP2018}. 

\begin{definition}
\label{def:renormsol3}
Let $\mu = \mu_0+\mu_s \in \mathfrak{M}_b(\Omega)$, where $\mu_0 \in \mathfrak{M}_0(\Omega)$ and $\mu_s \in \mathfrak{M}_s(\Omega)$. A measurable function $u$ defined in $\Omega$ and finite almost everywhere is called a renormalized solution of \eqref{eq:elliptictype} if $T_k(u) \in W^{1,p}_0(\Omega)$ for any $k>0$, $|{\nabla u}|^{p-1}\in L^r(\Omega)$ for any $0<r<\frac{n}{n-1}$, and $u$ has the following additional property. For any $k>0$ there exist  nonnegative Radon measures $\lambda_k^+, \lambda_k^- \in\mathfrak{M}_0(\Omega)$ concentrated on the sets $u=k$ and $u=-k$, respectively, such that $\mu_k^+\rightarrow\mu_s^+$, $\mu_k^-\rightarrow\mu_s^-$ in the narrow topology of measures and  that
 \begin{align*}
 \int_{\{|u|<k\}}\langle A(x,\nabla u),\nabla \varphi\rangle
  	dx=\int_{\{|u|<k\}}{\varphi d}{\mu_{0}}+\int_{\Omega}\varphi d\lambda_{k}%
  	^{+}-\int_{\Omega}\varphi d\lambda_{k}^{-},
 \end{align*}
  	for every $\varphi\in W^{1,p}_0(\Omega)\cap L^{\infty}(\Omega)$.
\end{definition}
In definition \ref{def:renormsol3}, we follow the notation of operator $T_k$ defined as:
\begin{align}
\label{eq:Tk}
T_k (s) = \max\left\{ -k,\min\{k,s\} \right\}, \quad k \in \mathbb{R}^+, \ s \in \mathbb{R},
\end{align}
that belongs to $W_0^{1,p}(\Omega)$ for every integer $k>0$, which satisfies $-\div A(x,\nabla T_k(u)) = \mu_k$ in the sense of distribution in $\Omega$ for a finite measure $\mu_k$ in $\Omega$. 
\begin{definition}
\label{def:truncature}
Let $u$ be a measurable function defined on $\Omega$ which is finite almost everywhere, and satisfies $T_k(u) \in W^{1,1}_0(\Omega)$ for every $k>0$. Then, there exists a unique measurable function $v: \Omega \to \mathbb{R}^n$ such that:
\begin{align}
\nabla T_k(u) = \chi_{\{|u| \le k\}} v , \quad \text{almost everywhere in} \ \ \Omega, \quad \text{for  every} \ k>0.
\end{align}
Moreover, the function $v$ is so-called ``distributional gradient $\nabla u$'' of $u$.
\end{definition}

The following Remark characterizes the gradient estimate for solution $u$ to \eqref{eq:elliptictype}. For the proof, we refer the reader to \cite[Theorem 4.1]{11DMOP}.
\begin{remark}
	\label{rem:nablau}
	Let $\Omega$ is an open bounded domain in $\mathbb{R}^n$. Then, there exists $C=C(n,p,\alpha, \beta)$ such that for any the renormalized solution $u$ to \eqref{eq:elliptictype} with a given finite measure data $\mu$ there holds:
	\begin{align}
	\label{eq:nablau}
	\|\nabla u\|_{L^{\frac{(p-1)n}{n-p},\infty}(\Omega)}\leq C\left[|\mu|(\Omega)\right]^{\frac{1}{p-1}}.
	\end{align}
\end{remark}

Let us now state main results of boundedness property of maximal function and gradient estimates of solution to \eqref{eq:elliptictype} on Lorentz-Morrey spaces, where the proofs would be found in Section \ref{sec:main}, respectively.

\begin{theorem}
	\label{theo:lambda_estimate}
	 Let $\frac{3n-2}{2n-1}<p \le 2-\frac{1}{n}$ and suppose that $\Omega \subset \mathbb{R}^n$ is a bounded domain whose complement satisfies a $p$-capacity uniform thickness condition with constants $c_0, r_0>0$. Let $\mu \in \mathfrak{M}_b(\Omega)$, $0<R_0<diam(\Omega)$ and the balls $D_1=B_{R_0}(x_0), D_2=B_{20R_0}(x_0)$ be such that $D_1\cap\Omega\not =\emptyset$, where $x_0$ is fixed in $\Omega$. 
	Then, for any $\gamma_0 \in \left[\frac{2-p}{2},\frac{(p-1)n}{n-1} \right)$ and for any renormalized solution $u$ to \eqref{eq:elliptictype} with given measure data $\mu$, there exist $\Theta = \Theta(n,p,\alpha,\beta,c_0)>p$ and constant $C>0$ depending on $n,p,\alpha,\beta,c_0,diam(\Omega)/{r_0}$ such that the following estimate
	\begin{align}
		\label{eq:mainlambda}
		&\mathcal{L}^n\left(\left\{({\bf M}(\chi_{D_2}|\nabla u|^{\gamma_0}))^{1/\gamma_0}>\varepsilon^{-\frac{1}{\Theta}}\lambda, (\mathbf{M}_1(\chi_{D_2}\mu))^{\frac{1}{p-1}}\le \varepsilon^{\frac{1}{(p-1)\gamma_0}}\lambda \right\}\cap D_1 \right) \nonumber\\
		&\qquad\leq C \varepsilon  \mathcal{L}^n \left(\left\{ ({\bf M}(\chi_{D_2}|\nabla u|^{\gamma_0}))^{1/\gamma_0}> \lambda\right\}\cap D_1\right)
	\end{align}
	holds for any $\lambda>\varepsilon^{-\frac{1}{(p-1)\gamma_0}}\|\nabla u\|_{L^{\gamma_0}(D_2)}R_0^{-\frac{n}{\gamma_0} }, \varepsilon\in (0,1)$.
\end{theorem}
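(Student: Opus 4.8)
The proof runs a good-$\lambda$ (Calder\'on--Zygmund) covering argument in the spirit of Caffarelli--Peral, reducing \eqref{eq:mainlambda} to a density estimate at small scales and then applying a Vitali-type covering lemma on the ball $D_1$. Write $E$ for the set in the left-hand side of \eqref{eq:mainlambda} and $F=\{({\bf M}(\chi_{D_2}|\nabla u|^{\gamma_0}))^{1/\gamma_0}>\lambda\}\cap D_1$ for the one on the right; since $\varepsilon\in(0,1)$ we have $\varepsilon^{-1/\Theta}\ge 1$, hence $E\subseteq F$. A first, trivial reduction: fix a threshold $\varepsilon_0=\varepsilon_0(n,p,\alpha,\beta,c_0)\in(0,1)$ (to be made small below); for $\varepsilon\in[\varepsilon_0,1)$ the claim is immediate from $\mathcal{L}^n(E)\le\mathcal{L}^n(F)\le\varepsilon_0^{-1}\varepsilon\,\mathcal{L}^n(F)$, so from now on I assume $\varepsilon<\varepsilon_0$.

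Two preliminary observations are used repeatedly. First, since $x_0\in\Omega$, for every $x\in D_1$ and $0<r\le R_0$ we have $B_{10r}(x)\subseteq B_{11R_0}(x_0)\subseteq D_2$, so on such balls $\chi_{D_2}|\nabla u|^{\gamma_0}=|\nabla u|^{\gamma_0}$ and $\mu$ coincides with $\chi_{D_2}\mu$; this is exactly what lets the local comparison estimates of Section~\ref{sec:lems} be applied with the datum controlled by ${\bf M}_1(\chi_{D_2}\mu)$. Second, by the weak-$(1,1)$ bound for the Hardy--Littlewood maximal operator together with the standing restriction $\lambda>\varepsilon^{-\frac{1}{(p-1)\gamma_0}}\|\nabla u\|_{L^{\gamma_0}(D_2)}R_0^{-n/\gamma_0}$, one gets $\mathcal{L}^n(E)\le C\,\varepsilon^{\frac{\gamma_0}{\Theta}+\frac{1}{p-1}}\,\mathcal{L}^n(D_1)$, which for $\varepsilon_0$ small is $<\nu\,\mathcal{L}^n(D_1)$ with $\nu$ the density parameter below.

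The core is the density estimate: \emph{for every $x\in D_1$ and $0<r\le R_0$, if $B_r(x)\cap D_1\not\subseteq F$ and $E\cap B_r(x)\ne\emptyset$, then $\mathcal{L}^n(E\cap B_r(x))\le C\,\varepsilon\,\mathcal{L}^n(B_r(x))$.} To prove it, pick $y\in B_r(x)\cap D_1$ with $({\bf M}(\chi_{D_2}|\nabla u|^{\gamma_0}))^{1/\gamma_0}(y)\le\lambda$ and $z\in B_r(x)$ with $({\bf M}_1(\chi_{D_2}\mu))^{\frac{1}{p-1}}(z)\le\varepsilon^{\frac{1}{(p-1)\gamma_0}}\lambda$. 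For $w\in B_r(x)$, the part of ${\bf M}(\chi_{D_2}|\nabla u|^{\gamma_0})(w)$ coming from balls of radius $\ge r$ is at most $2^n\lambda^{\gamma_0}$ (compare with $y$), hence negligible once $\varepsilon^{-\gamma_0/\Theta}>2^n$. On radii $<r$ we replace $u$ on $B_{10r}(x)\cap\Omega$ by the $A$-harmonic function $w_0$ with the same boundary values on $\partial(B_{10r}(x)\cap\Omega)\cap\Omega$ and zero data on $\partial\Omega\cap B_{10r}(x)$ (extended by $0$), invoking the interior comparison and gradient estimates of Section~\ref{sec:lems} when $B_{10r}(x)\subseteq\Omega$, and the boundary ones --- which use precisely the uniform $p$-capacity thickness of $\mathbb{R}^n\setminus\Omega$, the passage from scale $r$ to scale $r_0$ costing a factor depending on $\operatorname{diam}(\Omega)/r_0$ --- otherwise. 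These yield
\begin{align*}
\left(\frac{1}{|B_{5r}(x)|}\int_{B_{5r}(x)}|\nabla u-\nabla w_0|^{\gamma_0}\,dx\right)^{\frac{1}{\gamma_0}}&\le C\left(\frac{|\mu|(B_{10r}(x))}{r^{\,n-1}}\right)^{\frac{1}{p-1}},\\
\|\nabla w_0\|_{L^\infty(B_{2r}(x))}^{\gamma_0}&\le \frac{C}{|B_{5r}(x)|}\int_{B_{5r}(x)}|\nabla w_0|^{\gamma_0}\,dx,
\end{align*}
and combining them with the choices of $y$ and $z$ gives $\|\nabla w_0\|_{L^\infty(B_{2r}(x))}^{\gamma_0}\le C_{\ast}\lambda^{\gamma_0}$ and $|B_{2r}(x)|^{-1}\int_{B_{2r}(x)}|\nabla u-\nabla w_0|^{\gamma_0}\,dx\le C\,\varepsilon^{\frac{1}{p-1}}\lambda^{\gamma_0}$. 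By subadditivity of $t\mapsto t^{\gamma_0}$, every $w\in E\cap B_r(x)$ then satisfies ${\bf M}(\chi_{B_{2r}(x)}|\nabla u-\nabla w_0|^{\gamma_0})(w)>c\,(\varepsilon^{-1/\Theta}\lambda)^{\gamma_0}$ --- provided $\varepsilon^{-\gamma_0/\Theta}$ exceeds the structural constants $c^{-1},C_\ast$, again arranged by choosing $\varepsilon_0$ small and $\Theta>p$ --- so the weak-$(1,1)$ bound gives $\mathcal{L}^n(E\cap B_r(x))\le C\,\varepsilon^{\frac{\gamma_0}{\Theta}+\frac{1}{p-1}}\mathcal{L}^n(B_r(x))\le C\,\varepsilon\,\mathcal{L}^n(B_r(x))$, the last inequality because $p\le 2-\tfrac1n<2$ forces $\tfrac{1}{p-1}>1$.

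Finally, the contrapositive of the density estimate is exactly the hypothesis of the Vitali/Calder\'on--Zygmund covering lemma on $D_1$ with parameter $\nu$ proportional to $\varepsilon$ (one also needs $\nu<1$, i.e.\ $\varepsilon_0$ small, and the smallness $\mathcal{L}^n(E)<\nu\,\mathcal{L}^n(D_1)$ recorded above): therefore $\mathcal{L}^n(E)\le C(n)\,\nu\,\mathcal{L}^n(F)=C\,\varepsilon\,\mathcal{L}^n(F)$, which is \eqref{eq:mainlambda}. The main obstacle is the density estimate, and inside it the uniform control of the $A$-harmonic comparison \emph{up to the boundary} over the whole range $0<r\le R_0$: one must glue the interior and boundary comparison/gradient lemmas of Section~\ref{sec:lems}, handle the dichotomy $B_{10r}(x)\subseteq\Omega$ versus $B_{10r}(x)\cap\partial\Omega\ne\emptyset$, and absorb the mismatch between $r_0$ and $R_0$ into the constant; the other delicate point is keeping all exponent and threshold bookkeeping uniform in $\gamma_0$ over $[\tfrac{2-p}{2},\tfrac{(p-1)n}{n-1})$ --- which is where the singular constraint $p\le 2-\tfrac1n$ enters, guaranteeing $\tfrac1{p-1}>1$ and hence that the surplus power of $\varepsilon$ survives.
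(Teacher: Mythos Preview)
Your overall architecture---good-$\lambda$ covering on $D_1$, localization of the maximal function, comparison with an $A$-harmonic replacement, Vitali-type lemma---is exactly the paper's strategy. But two of the ingredients you invoke are not available under the paper's assumptions, and both are precisely where the singular range $\frac{3n-2}{2n-1}<p\le 2-\frac1n$ and the merely measurable coefficients matter.

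First, the comparison estimate you display,
\[
\Big(\fint_{B_{5r}(x)}|\nabla u-\nabla w_0|^{\gamma_0}\,dx\Big)^{1/\gamma_0}\le C\Big(\frac{|\mu|(B_{10r}(x))}{r^{n-1}}\Big)^{\frac{1}{p-1}},
\]
is the $p\ge 2$ estimate. For $p<2$ the available bound is Lemma~\ref{lem:estimateinter} (and its boundary analogue~\ref{lem:estimatebound}), which carries an additional term $C\,\frac{|\mu|(B)}{r^{n-1}}\big(\fint_B|\nabla u|^{\gamma_0}\big)^{(2-p)/\gamma_0}$. After inserting the control from $y$ and $z$ this extra term gives only $C\varepsilon^{1/\gamma_0}\lambda$, not $C\varepsilon^{1/((p-1)\gamma_0)}\lambda$; so your claimed bound $\fint|\nabla u-\nabla w_0|^{\gamma_0}\le C\varepsilon^{1/(p-1)}\lambda^{\gamma_0}$ is too strong---you only get $C\varepsilon\,\lambda^{\gamma_0}$. (This still suffices for the final inequality, but the argument as written is incorrect, and omitting that term hides the very feature that makes the singular case delicate.)

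Second, and more seriously, you assert
\[
\|\nabla w_0\|_{L^\infty(B_{2r}(x))}^{\gamma_0}\le \frac{C}{|B_{5r}(x)|}\int_{B_{5r}(x)}|\nabla w_0|^{\gamma_0}\,dx,
\]
i.e.\ local Lipschitz regularity of the $A$-harmonic comparison. Under hypotheses \eqref{eq:A1}--\eqref{eq:A2} alone, with no continuity of $A$ in $x$, such an $L^\infty$ gradient bound does \emph{not} hold; one only has the reverse H\"older inequality of Lemmas~\ref{111120147} and~\ref{111120147**} up to some finite exponent $\Theta=\Theta(n,p,\alpha,\beta,c_0)>p$. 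This is why the theorem produces a finite $\Theta$ rather than arbitrary $q$. The paper accordingly does \emph{not} argue ``$\nabla w_0$ is bounded, so the whole excess sits in $\nabla(u-w_0)$''; instead it splits
\[
\mathcal{L}^n(E\cap B_r(x))\le \mathcal{L}^n\big(\{{\bf M}(\chi_{B_{2r}}|\nabla(u-w)|^{\gamma_0})>c\,\varepsilon^{-\gamma_0/\Theta}\lambda^{\gamma_0}\}\big)+\mathcal{L}^n\big(\{{\bf M}(\chi_{B_{2r}}|\nabla w|^{\gamma_0})>c\,\varepsilon^{-\gamma_0/\Theta}\lambda^{\gamma_0}\}\big),
\]
uses weak-$(1,1)$ on the first piece, and on the second piece uses the weak-$(\Theta/\gamma_0,\Theta/\gamma_0)$ bound together with the reverse H\"older control of $\fint|\nabla w|^\Theta$ to produce the factor $\varepsilon$. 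Your proof needs to be rewritten along these lines; as it stands, the step that removes the $\nabla w_0$ contribution is unjustified. You should also thread the approximation $u_k=T_k(u)$ through the comparison (as the paper does), since the lemmas of Section~\ref{sec:lems} are stated for $W^{1,p}$ solutions.
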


\begin{theorem}
\label{theo:bst} 
 Let $\frac{3n-2}{2n-1}<p \le 2-\frac{1}{n}$ and suppose that $\Omega \subset \mathbb{R}^n$ is a bounded domain whose complement satisfies a $p$-capacity uniform thickness condition with constants $c_0, r_0>0$. Then, there exist $\Theta = \Theta(n,p,\alpha,\beta,c_0)>p$, $\beta_0 = \beta_0(n,p,\alpha,\beta) \in (0,\frac{1}{2}]$ and a constant $C = C(n,p,\alpha,\beta, c_0,diam(\Omega)/r_0)>0$ such that for any $0<q<\Theta$, $0<s \le \infty$, $1+(p-1)(1-\beta_0) < \theta \le n$ and for any solution $u$ to \eqref{eq:elliptictype} with a finite measure $\mu \in \mathfrak{M}_b(\Omega)$, there holds
	\begin{align*}
&	\sup_{\rho \in (0,T_0),x_0 \in \Omega}\rho^{-\frac{n}{q}+\frac{\theta-1}{p-1}} \|({\bf M}(\chi_{B_{10\rho}(x_0)}|\nabla u|^{\gamma_0}))^{1/\gamma_0}\|_{L^{q,s}(B_\rho(x_0))}
\\&\leq  C \|\mathbf{M}_{\theta}^{T_0}(|\mu|)\|_{L^\infty(\Omega)}^{\frac{1}{p-1}}+ \sup_{\rho \in (0,T_0),x_0 \in \Omega}C\rho^{-\frac{n}{q}+\frac{\theta-1}{p-1}}\|(\mathbf{M}_1(\chi_{B_{10\rho}}\mu))^{\frac{1}{p-1}}\|_{L^{q,s}(B_\rho(x_0))},
\end{align*} 
for any $\gamma_0 \in \left[\frac{2-p}{2}, \frac{(p-1)n}{n-1}\right)$, where $T_0 = diam(\Omega)$.
\end{theorem}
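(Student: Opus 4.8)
The plan is to turn the good-$\lambda$ inequality \eqref{eq:mainlambda} of Theorem~\ref{theo:lambda_estimate} into a ball-by-ball bound for the Lorentz quasinorm of $(\mathbf{M}(\chi_{D_2}|\nabla u|^{\gamma_0}))^{1/\gamma_0}$ by integrating it in $\lambda$ against the measure in \eqref{eq:lorentz}, to absorb the leading term using the hypothesis $0<q<\Theta$, and finally to control the remaining ``gradient tail'' by a Morrey-type density bound for $\nabla u$ coming from the comparison estimates of Section~\ref{sec:lems}.

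Fix $x_0\in\Omega$ and $\rho\in(0,T_0)$, apply Theorem~\ref{theo:lambda_estimate} with $R_0$ comparable to $\rho$, and (after a routine finite-overlap covering reconciling the dilation factor $10$ in the statement with the factor $20$ of Theorem~\ref{theo:lambda_estimate}) work on $D_1=B_\rho(x_0)$ and $D_2=B_{10\rho}(x_0)$. Abbreviate $g=(\mathbf{M}(\chi_{D_2}|\nabla u|^{\gamma_0}))^{1/\gamma_0}$, $h=(\mathbf{M}_1(\chi_{D_2}\mu))^{1/(p-1)}$, $\Phi(\lambda)=\mathcal{L}^n(\{g>\lambda\}\cap D_1)$, and $\lambda_0=\varepsilon^{-1/((p-1)\gamma_0)}\rho^{-n/\gamma_0}\|\nabla u\|_{L^{\gamma_0}(D_2)}$, with $\varepsilon\in(0,1)$ still free. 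For $s<\infty$, \eqref{eq:lorentz} and the change of variable $\lambda\mapsto\varepsilon^{-1/\Theta}\lambda$ give $\|g\|_{L^{q,s}(D_1)}^s=q\,\varepsilon^{-s/\Theta}\int_0^\infty\lambda^{s-1}\Phi(\varepsilon^{-1/\Theta}\lambda)^{s/q}\,d\lambda$. Splitting the integral at $\lambda_0$, using $\Phi\le\mathcal{L}^n(D_1)\le c_n\rho^n$ for $\lambda\le\lambda_0$, and inserting \eqref{eq:mainlambda} in the form $\Phi(\varepsilon^{-1/\Theta}\lambda)\le C\varepsilon\,\Phi(\lambda)+\mathcal{L}^n(\{h>\varepsilon^{1/((p-1)\gamma_0)}\lambda\}\cap D_1)$ for $\lambda>\lambda_0$ (together with $(a+b)^{s/q}\le c_{s/q}(a^{s/q}+b^{s/q})$), one arrives at
\[
\|g\|_{L^{q,s}(D_1)}^s\le C\,\varepsilon^{\,s(1/q-1/\Theta)}\,\|g\|_{L^{q,s}(D_1)}^s+C(\varepsilon)\,\big(\rho^{n/q}\lambda_0\big)^s+C(\varepsilon)\,\|h\|_{L^{q,s}(D_1)}^s.
\]
Since $q<\Theta$ the exponent $s(1/q-1/\Theta)$ is strictly positive, so one may fix $\varepsilon$ small (depending on $n,p,\alpha,\beta,c_0$ and on $q,s$) and absorb the first term into the left-hand side; this is legitimate once $\|g\|_{L^{q,s}(D_1)}$ is known to be finite, which is obtained by first running the argument for the energy solutions of the regularized problems with smooth data converging to $\mu$ and passing to the limit (or, when $q<\tfrac{(p-1)n}{n-p}$, directly from Remark~\ref{rem:nablau} and the boundedness of $\mathbf{M}$ on $L^{r,\infty}$, $r>1$). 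The case $s=\infty$ is identical with $\sup_{\lambda}\lambda\Phi(\lambda)^{1/q}$ in place of the integral.

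After absorption, recalling the value of $\lambda_0$, this reads $\|g\|_{L^{q,s}(B_\rho(x_0))}\le C\big(\rho^{n/q-n/\gamma_0}\|\nabla u\|_{L^{\gamma_0}(B_{10\rho}(x_0))}+\|h\|_{L^{q,s}(B_\rho(x_0))}\big)$. Multiplying by $\rho^{-n/q+(\theta-1)/(p-1)}$ and taking $\sup_{\rho\in(0,T_0),\,x_0\in\Omega}$, the $h$-term reproduces precisely the second summand of the asserted right-hand side, so the theorem reduces to the Morrey density estimate
\[
\sup_{\rho\in(0,T_0),\,x_0\in\Omega}\rho^{\frac{\theta-1}{p-1}-\frac{n}{\gamma_0}}\,\|\nabla u\|_{L^{\gamma_0}(B_{10\rho}(x_0))}\le C\,\big\|\mathbf{M}_\theta^{T_0}(|\mu|)\big\|_{L^\infty(\Omega)}^{\frac{1}{p-1}}.
\]
Writing $K$ for the right-hand side, so that $|\mu|(B_{2r}(x))\le(2r)^{n-\theta}K^{p-1}$ and hence $\big(r^{1-n}|\mu|(B_{2r}(x))\big)^{1/(p-1)}\le cK\,r^{-(\theta-1)/(p-1)}$, one proves this by comparison: on a ball $B_{2r}$ meeting $\Omega$, let $w$ solve $-\div(A(x,\nabla w))=0$ with the boundary values of $u$ (extended by $0$ outside $\Omega$, where the $p$-capacity uniform thickness of $\mathbb{R}^n\setminus\Omega$ provides the boundary regularity); the interior and boundary comparison estimates of Section~\ref{sec:lems} yield, using precisely $\gamma_0\ge\tfrac{2-p}{2}$, a bound $\big(|B_{2r}|^{-1}\int_{B_{2r}}|\nabla u-\nabla w|^{\gamma_0}\big)^{1/\gamma_0}\le c\big(r^{1-n}|\mu|(B_{2r})\big)^{1/(p-1)}$, while $\nabla w$ enjoys a H\"older-type decay with exponent $\beta_0=\beta_0(n,p,\alpha,\beta)\in(0,\tfrac12]$. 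Feeding these into a Campanato-type iteration for $\phi(r):=\big(|B_r|^{-1}\int_{B_r}|\nabla u|^{\gamma_0}\big)^{1/\gamma_0}$, and noting that the measure term scales like $cK\,r^{-(\theta-1)/(p-1)}$, one checks that the iteration closes exactly in the range $1+(p-1)(1-\beta_0)<\theta\le n$: the lower bound is equivalent to $\tfrac{\theta-1}{p-1}+\beta_0>1$, which makes the relevant geometric series converge, while $\theta\le n$ together with $\gamma_0<\tfrac{(p-1)n}{n-1}$ keeps $\tfrac{\theta-1}{p-1}<\tfrac{n}{\gamma_0}$; the top scale $r\simeq T_0$ is handled by Remark~\ref{rem:nablau} together with $|\mu|(\Omega)\le T_0^{n-\theta}K^{p-1}$. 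This gives $\phi(r)\le cK\,r^{-(\theta-1)/(p-1)}$ uniformly in $r$ and the centre, hence the density estimate, which completes the proof.

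The whole weight of the argument lies in the last paragraph, i.e.\ in the Morrey density estimate and the comparison estimates behind it; the preceding two paragraphs are a mechanical integration of \eqref{eq:mainlambda}. One caveat worth recording: the constant obtained above depends a priori on $q$ and $s$ through the choice of $\varepsilon$, and to get a bound uniform over the whole range $0<q<\Theta$, $0<s\le\infty$ one should iterate \eqref{eq:mainlambda} dyadically in $\lambda$ rather than absorbing in a single step.
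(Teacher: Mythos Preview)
Your argument tracks the paper's proof almost step for step: integrate the good-$\lambda$ inequality of Theorem~\ref{theo:lambda_estimate} in $\lambda$ via \eqref{eq:lorentz}, split at $\lambda_0$, absorb using $q<\Theta$, and then control the residual term $\rho^{n/q-n/\gamma_0}\|\nabla u\|_{L^{\gamma_0}(B_{10\rho})}$ by a Morrey density bound, which in the paper is Lemma~\ref{lem:H:Sep26} (itself the union of Lemmas~\ref{lem:res7.2H} and~\ref{lem:res7.6H}). There is no substantive difference in strategy.

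One inaccuracy in your sketch of the density estimate deserves flagging: in the singular range $p<2$ treated here, the comparison estimates (Lemmas~\ref{lem:estimateinter} and~\ref{lem:estimatebound}) do \emph{not} give the clean bound $\big(\fint_{B_{2r}}|\nabla u-\nabla w|^{\gamma_0}\big)^{1/\gamma_0}\le c\big(r^{1-n}|\mu|(B_{2r})\big)^{1/(p-1)}$ that you state; they carry an additional term $c\,r^{1-n}|\mu|(B_{2r})\big(\fint_{B_{2r}}|\nabla u|^{\gamma_0}\big)^{(2-p)/\gamma_0}$ which couples the measure with the unknown. The paper handles this by applying Young's inequality to split it into $\varepsilon\,\phi(r)$ plus a pure measure piece, and then invokes the iteration Lemma~\ref{lem:hanlin} (which is designed precisely to absorb such an $\varepsilon$-perturbation). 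Your ``Campanato-type iteration'' needs this extra step to close; without it the recursion for $\phi$ is nonlinear and does not directly yield the decay you claim. This is a small omission rather than a wrong approach, but it is exactly the point where the singular case differs from $p\ge 2$.
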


It can be noticed that in this theorem and in what follows, for simplicity, the set $\{x \in \Omega: |g(x)| > \Lambda\}$ is denoted by $\{|g|>\Lambda\}$ (in order to avoid the confusion that may arise). And the fractional  maximal function $\mathbf{M}_\alpha$ of each locally finite measure $\mu$ by:
\begin{align}
\label{eq:Malpha}
\mathbf{M}_\alpha(\mu)(x) = \sup_{\rho>0}{\frac{|\mu|(B_\rho(x))}{\rho^{n-\alpha}}}, \quad \forall x \in 
\mathbb{R}^n, ~0<\alpha<n.
\end{align}
For the case $\alpha=0$, the definition of $\mathbf{M}_\alpha$ becomes $\mathbf{M}_0$ is essentially the Hardy-Littlewood maximal function $\mathbf{M}$ defined for each locally integrable function $f$ in $\mathbb{R}^n$ by:
\begin{align}
\label{eq:M0}
\mathbf{M}(f)(x) = \sup_{\rho>0}{\fint_{B_{\rho}(x)}|f(y)|dy},~~ \forall x \in \mathbb{R}^n.
\end{align}
Otherwise, the notion of fractional maximal function $\mathbf{M}_\alpha^T$ is also defined as:
\begin{align}
\label{eq:MaT}
\mathbf{M}_{\alpha}^{T}(|\mu|)(y)=\sup_{0<\rho<T}\frac{|\mu|(B_\rho(y))}{\rho^{n-\alpha}},
\end{align}
for any $T>0$ and $0<\alpha<n$.

\begin{remark}
\label{rem:boundM}
It refers to \cite{55Gra} that the operator $\mathbf{M}$ is bounded from $L^s(\mathbb{R}^n)$ to $L^{s,\infty}(\mathbb{R}^n)$, for $s \ge 1$, this means,
\begin{align}
\mathcal{L}^n\left(\{\mathbf{M}(g)>\lambda\}\right) \le \frac{C}{\lambda^s}\int_{\mathbb{R}^n}{|g|^s dx}, \quad \mbox{ for all } \lambda>0.
\end{align}
\end{remark}
\begin{remark}
\label{rem:boundMlorentz}
In \cite{55Gra}, it allows us to present a boundedness property of maximal function $\mathbf{M}$ in the Lorentz space $L^{q,s}(\mathbb{R}^n)$, for $q>1$ as follows:
\begin{align}
\label{eq:boundM}
\|\mathbf{M}(f)\|_{L^{q,s}(\Omega)} \le C \|f\|_{L^{q,s}(\Omega)}.
\end{align}
\end{remark}

The following theorem provides our main result of gradient estimate of solution in the Lorentz-Morrey spaces.

\begin{theorem}
\label{theo:P}
 Let $\frac{3n-2}{2n-1}<p \le 2-\frac{1}{n}$, $\mu \in \mathfrak{M}_b(\Omega)$ and suppose that $\Omega \subset \mathbb{R}^n$ is a bounded domain whose complement satisfies a $p$-capacity uniform thickness condition with constants $c_0, r_0>0$. Then, there exist $\Theta = \Theta(n,p,\alpha,\beta,c_0)>p$, $\beta_0 = \beta_0(n,p,\alpha,\beta) \in (0,1/2]$ and $C = C(n,p,\alpha,\beta, c_0,diam(\Omega)/r_0)>0$ such that for any $0<q<\Theta$, $0<s \le \infty$, $1+(p-1)(1-\beta_0) < \theta \le n$ and for any solution $u$ to \eqref{eq:elliptictype} with a finite measure $\mu \in L^{\frac{q(\theta-1)}{\theta(p-1)}, \frac{s(\theta-1)}{\theta(p-1)};\frac{q(\theta-1)}{p-1}}(\Omega)$ there holds
\begin{align}\label{eq:globalbound}
&\|\nabla u\|_{L^{q,s;\frac{q(\theta-1)}{p-1}}(\Omega)} \leq C \||\mu|^{\frac{1}{p-1}}\|_{L^{\frac{q(\theta-1)}{\theta}, \frac{s(\theta-1)}{\theta};\frac{q(\theta-1)}{p-1}}(\Omega)}.
\end{align}
\end{theorem}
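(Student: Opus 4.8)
The plan is to derive the Lorentz–Morrey gradient bound \eqref{eq:globalbound} from the rescaled Morrey-type estimate in Theorem \ref{theo:bst} by a careful choice of the Morrey exponent and an interpolation-type comparison of the two sides. First I would fix a ball $B_\rho(x_0)$ with $x_0 \in \Omega$ and $0<\rho<T_0 = diam(\Omega)$, and use the pointwise bound $|\nabla u| \le (\mathbf{M}(\chi_{B_{10\rho}(x_0)}|\nabla u|^{\gamma_0}))^{1/\gamma_0}$ valid a.e.\ on $B_\rho(x_0)$ (since $\gamma_0 \le (p-1)n/(n-1) < 1$, the Lebesgue differentiation theorem gives $|\nabla u|^{\gamma_0} \le \mathbf{M}(\chi_{B_{10\rho}(x_0)}|\nabla u|^{\gamma_0})$ a.e.\ on $B_\rho(x_0)$). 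Hence
\[
\rho^{\frac{q(\theta-1)/(p-1)-n}{q}}\|\nabla u\|_{L^{q,s}(B_\rho(x_0)\cap\Omega)} \le \rho^{-\frac{n}{q}+\frac{\theta-1}{p-1}}\|(\mathbf{M}(\chi_{B_{10\rho}(x_0)}|\nabla u|^{\gamma_0}))^{1/\gamma_0}\|_{L^{q,s}(B_\rho(x_0))},
\]
and taking the supremum over $\rho,x_0$ the left side becomes exactly $\|\nabla u\|_{L^{q,s;\kappa}(\Omega)}$ with $\kappa = q(\theta-1)/(p-1)$, so it is controlled by the left side of the inequality in Theorem \ref{theo:bst}.

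Next I would estimate the two terms on the right side of Theorem \ref{theo:bst}. For the first term, $\|\mathbf{M}_\theta^{T_0}(|\mu|)\|_{L^\infty(\Omega)}^{1/(p-1)}$, I would show it is bounded by the Lorentz–Morrey norm $\||\mu|^{1/(p-1)}\|_{L^{q(\theta-1)/\theta,\,s(\theta-1)/\theta;\,q(\theta-1)/(p-1)}(\Omega)}$: by definition of $\mathbf{M}_\theta^{T_0}$ one has $|\mu|(B_\rho(y)) \le \rho^{n-\theta}\mathbf{M}_\theta^{T_0}(|\mu|)(y)$, and comparing this with the Morrey-norm estimate $|\mu|(B_\rho(y)) \lesssim \rho^{n - q(\theta-1)/\theta \cdot (p-1)/(q(\theta-1)) \cdot \text{something}}\cdots$ — more precisely, computing the exponent arising from the Morrey scaling of $|\mu|^{1/(p-1)}$ in $L^{q(\theta-1)/\theta,\cdot;\,\kappa}$, the homogeneity works out so that the $L^\infty$-norm of the truncated fractional maximal function is dominated. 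For the second term, I would use the boundedness of the Hardy–Littlewood and fractional maximal operators on Lorentz spaces (Remarks \ref{rem:boundM}, \ref{rem:boundMlorentz}) together with the elementary pointwise inequality $(\mathbf{M}_1(\chi_{B_{10\rho}}\mu))^{1/(p-1)} \lesssim \mathbf{M}_\beta(|\mu|^{1/(p-1)}\chi_{B_{10\rho}})^{\text{(appropriate power)}}$ to reduce to $\||\mu|^{1/(p-1)}\|_{L^{q,s}(B_{10\rho})}$ with the correct Morrey weight, and then bound the resulting supremum over balls by the claimed Lorentz–Morrey norm of $|\mu|^{1/(p-1)}$. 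Matching the triple of indices $(q(\theta-1)/\theta,\, s(\theta-1)/\theta,\, q(\theta-1)/(p-1))$ is forced by scaling: $q(\theta-1)/\theta$ is the unique first index for which the $\mathbf{M}_1$-regularization followed by taking the $(p-1)$-th root and passing through the Morrey scaling lands back on exponent $q$ with weight $q(\theta-1)/(p-1)$.

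The main obstacle will be bookkeeping the homogeneities so that the three Morrey/Lorentz exponents on the right-hand side of \eqref{eq:globalbound} come out precisely as stated, rather than off by the usual $p-1$ or $\theta$ factors; in particular, the step that converts $\|\mathbf{M}_1(\chi_{B_{10\rho}}\mu)\|$ estimates into a norm of $|\mu|$ itself, combined with taking $(p-1)$-th roots inside a Lorentz quasi-norm (which rescales the first and second indices by $1/(p-1)$), has to be done with care because quasi-norms of powers do not behave linearly. I expect to handle this by first reducing, via a density/truncation argument using $T_k(u)$ and Definition \ref{def:renormsol3}, to measures $\mu$ with the relevant finite Lorentz–Morrey norm, then applying Theorem \ref{theo:bst} with $\gamma_0 = (2-p)/2$ fixed, and finally invoking a scaling lemma (of the type $\||f|^a\|_{L^{q,s}} = \|f\|_{L^{aq,as}}^a$) to collapse the composite indices to the claimed form. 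A secondary technical point is to check that the restriction $1+(p-1)(1-\beta_0) < \theta \le n$ together with $0<q<\Theta$ is exactly what guarantees all the exponents appearing (e.g.\ $s(\theta-1)/\theta$, $q(\theta-1)/(p-1)$) lie in their admissible ranges, which I would verify at the end.
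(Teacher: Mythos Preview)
Your overall strategy is correct and matches the paper's: pass from $|\nabla u|$ to $(\mathbf{M}(\chi_{B_{10\rho}(x_0)}|\nabla u|^{\gamma_0}))^{1/\gamma_0}$ via Lebesgue differentiation, apply Theorem \ref{theo:bst}, and then bound the two terms on its right-hand side by the Lorentz--Morrey norm of $|\mu|^{1/(p-1)}$. Your treatment of the first term (the $L^\infty$ bound on $\mathbf{M}_\theta^{T_0}(|\mu|)$) is vague but on the right track: the paper simply uses the chain $L^{q',s';\kappa}\hookrightarrow L^{q',\infty;\kappa}$ together with the trivial bound $\|\mu\|_{L^{q',\infty}(B)}\gtrsim |B|^{1/q'-1}|\mu|(B)$ to recover $|\mu|(B_\rho)/\rho^{n-\theta}$ and hence the sup defining $\mathbf{M}_\theta^{T_0}$.

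The genuine gap is in your handling of the second term. The ``elementary pointwise inequality'' you propose,
\[
(\mathbf{M}_1(\chi_{B_{10\rho}}\mu))^{1/(p-1)} \lesssim \mathbf{M}_\beta\bigl(|\mu|^{1/(p-1)}\chi_{B_{10\rho}}\bigr)^{\text{(power)}},
\]
is not well-posed for general Radon measures (since $1/(p-1)>1$ here, $|\mu|^{1/(p-1)}$ has no meaning unless $\mu$ is absolutely continuous), and even for $\mu=f\,dx$ it does not produce the Morrey exponent $q(\theta-1)/(p-1)$. What is actually needed---and what the paper invokes from \cite[Theorem 1.1]{55Ph1}---is a Hedberg-type pointwise interpolation inequality
\[
\mathbf{M}_1(\chi_{10B_0}|\mu|)(x)\;\le\; C\,\bigl[\mathbf{M}(\chi_{10B_0}|\mu|)(x)\bigr]^{1-1/\theta}\,\|\mu\|^{1/\theta}_{L^{\frac{q(\theta-1)}{\theta(p-1)},\,\frac{s(\theta-1)}{\theta(p-1)};\,\frac{q(\theta-1)}{p-1}}(10B_0)},
\]
which splits $\mathbf{M}_1$ between the Hardy--Littlewood maximal function and the Lorentz--Morrey norm of $\mu$ itself. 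After taking $L^{q/(p-1),s/(p-1)}$ norms, the scaling identity $\||f|^a\|_{L^{q,s}}=\|f\|_{L^{aq,as}}^a$ (which you correctly flag) and the Lorentz boundedness of $\mathbf{M}$ (Remark \ref{rem:boundMlorentz}) then produce exactly the exponent triple in \eqref{eq:globalbound}. Without this Hedberg step the exponents cannot be matched by the route you sketch. Finally, the $T_k$-truncation you mention is unnecessary here: it has already been absorbed into the proof of Theorem \ref{theo:lambda_estimate}, and Theorem \ref{theo:bst} applies directly to any renormalized solution.
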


\begin{remark}
\label{rem:mingione}
In this work, it remarks that at least for the case $2 \le p \le n$, a local bounds of \eqref{eq:globalbound} was studied by G. Mingione in \cite{Mi3}.
\end{remark}

\section{Comparison estimates}
\label{sec:lems}

This section is intended to obtain the local interior and boundary comparison estimates that are essential to our development later. 

In a certain range of singular $p$, we always suppose that the domain $\Omega \subset \mathbb{R}^n$ is a bounded domain whose complement satisfies a $p$-capacity uniform thickness condition with constants $c_0, r_0>0$. And for simplicity of notation, the constant $C$ we mention in what follows always depends on some given constants $n,p$ and $\alpha, \beta>0$ of the Carath\'eodory vector valued function $\mathcal{A}$.

\subsection{Interior Estimates}
\label{sec:interior}
First, we will take our attention to the interior estimates. Let us fix a point $x_0 \in \Omega$, for $0<2R \le r_0$ ($r_0$ was given in \eqref{eq:capuni}) and $\mu \in \mathfrak{M}_b(\Omega)$. Assume $u\in W_{0}^{1,p}(\Omega)$ being solution to \eqref{eq:elliptictype} and for each ball $B_{2R}=B_{2R}(x_0)\subset\subset\Omega$, we consider the unique solution $w\in W_{0}^{1,p}(B_{2R})+u$ to the following equation:
\begin{equation}
\label{111120146}
\left\{ \begin{array}{rcl}
- \operatorname{div}\left( {A(x,\nabla w)} \right) &=& 0 \quad \text{in} \quad B_{2R}, \\ 
w &=& u\quad \text{on} \quad \partial B_{2R}.  
\end{array} \right.
\end{equation}

In this section, we are going to deal with some basic estimates of renormalized solution $u$ to \eqref{eq:elliptictype} in comparison to the solution $w$ to \eqref{111120146}. For the convenience of the reader, we repeat the results in relevant materials, via Lemmas \ref{111120147}, \ref{lem:estimateinter}, \ref{lem:res3.3P} and \ref{lem:res3.4P} herein without proofs, then making our proof of Lemma \ref{lem:res7.2H}.

We first recall the following version of interior Gehring's lemma applied to the function $w$ defined in equation \eqref{111120146}, has been studied in \cite[Theorem 6.7]{Giu}. It is also known as a kind of ``reverse'' H\"older inequality with increasing supports.

\begin{lemma} \label{111120147} 
Let $w$ be the solution to \eqref{111120146}. Then, there exist  constants $\Theta = \Theta(n,p,\alpha, \beta)>p$ and $C = C(n,p,\alpha,\beta)>0$ such that the following estimate      
	\begin{equation}\label{111120148}
	\left(\fint_{B_{\rho/2}(y)}|\nabla w|^{\Theta} dx\right)^{\frac{1}{\Theta}}\leq C\left(\fint_{B_{\rho}(y)}|\nabla w|^{p-1} dx\right)^{\frac{1}{p-1}}
	\end{equation}
	holds for all  $B_{\rho}(y)\subset B_{2R}(x_0)$. 
\end{lemma}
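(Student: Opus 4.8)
\textbf{Proof proposal for Lemma \ref{111120147}.} The plan is to derive the higher-integrability estimate \eqref{111120148} from a Caccioppoli inequality for the homogeneous equation \eqref{111120146} combined with the Sobolev--Poincar\'e inequality, feeding the resulting reverse H\"older inequality with ``wrong'' exponents into Gehring's lemma as stated in \cite[Theorem 6.7]{Giu}. First I would establish the Caccioppoli inequality: fixing a ball $B_\sigma(y) \subset B_{2R}(x_0)$ and a radius $\sigma/2 \le t < s \le \sigma$, I test the weak formulation of \eqref{111120146} with $\varphi = \zeta^p (w - \bar w)$, where $\zeta$ is a cutoff equal to $1$ on $B_t(y)$, supported in $B_s(y)$, with $|\nabla \zeta| \le C/(s-t)$, and $\bar w$ is the average of $w$ over $B_s(y)$. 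Using the growth bound \eqref{eq:A1} to control $|A(x,\nabla w)|$ and the monotonicity/ellipticity \eqref{eq:A2} (applied with $\eta = 0$, which yields the lower bound $\langle A(x,\xi),\xi\rangle \ge \alpha |\xi|^p$ after absorbing the $A(x,0)$ term, which vanishes by \eqref{eq:A1}) together with Young's inequality to absorb the cross term, I obtain
\begin{align*}
\fint_{B_t(y)} |\nabla w|^p \, dx \le \frac{C}{(s-t)^p} \fint_{B_s(y)} |w - \bar w|^p \, dx.
\end{align*}

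Next I would convert the right-hand side into a sub-$p$ power of the gradient. By the Sobolev--Poincar\'e inequality on the ball $B_s(y)$, with exponent $p_* = \frac{np}{n+p} < p$,
\begin{align*}
\left( \fint_{B_s(y)} |w - \bar w|^p \, dx \right)^{1/p} \le C s \left( \fint_{B_s(y)} |\nabla w|^{p_*} \, dx \right)^{1/p_*}.
\end{align*}
Combining these two displays and noting $s \le \sigma \le 2(s-t) \cdot (\text{comparable})$ — more precisely using a standard iteration/filling-hole argument of Giaquinta--Giusti type to pass from the intermediate radii $t,s$ to the fixed pair $\sigma/2, \sigma$ and absorb the bad constant — yields the reverse H\"older inequality
\begin{align*}
\left( \fint_{B_{\sigma/2}(y)} |\nabla w|^p \, dx \right)^{1/p} \le C \left( \fint_{B_\sigma(y)} |\nabla w|^{p_*} \, dx \right)^{1/p_*},
\end{align*}
valid for every $B_\sigma(y) \subset B_{2R}(x_0)$, with $p_* < p$.

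Finally I would invoke Gehring's lemma (the increasing-support version, \cite[Theorem 6.7]{Giu}): a function $g = |\nabla w|^{p_*} \in L^{p/p_*}_{\mathrm{loc}}$ satisfying such a reverse H\"older inequality with exponent $p/p_* > 1$ actually lies in $L^{r}_{\mathrm{loc}}$ for some $r > p/p_*$, with a quantitative self-improving estimate; translating back in terms of $|\nabla w|$ this gives an exponent $\Theta > p$ and a constant $C$, both depending only on $n,p,\alpha,\beta$ (through the constant in the reverse H\"older inequality and the doubling dimension $n$), such that
\begin{align*}
\left( \fint_{B_{\rho/2}(y)} |\nabla w|^{\Theta} \, dx \right)^{1/\Theta} \le C \left( \fint_{B_\rho(y)} |\nabla w|^{p} \, dx \right)^{1/p} \le C \left( \fint_{B_\rho(y)} |\nabla w|^{p-1} \, dx \right)^{1/(p-1)},
\end{align*}
for all $B_\rho(y) \subset B_{2R}(x_0)$, where the last inequality is just Jensen/H\"older since $p-1 < p$. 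This is exactly \eqref{111120148}.

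\textbf{Main obstacle.} The only genuinely delicate point is the passage from the Caccioppoli plus Sobolev--Poincar\'e estimate — which a priori relates two \emph{different} radii through the hole-filling iteration — to a clean reverse H\"older inequality on a fixed pair of concentric balls with a constant independent of the ball, so that Gehring's lemma applies verbatim; this is the classical Giaquinta--Giusti iteration and must be carried out carefully to keep all constants depending only on $n,p,\alpha,\beta$. Everything else (the algebra in Caccioppoli via \eqref{eq:A1}--\eqref{eq:A2}, and the final Jensen step $p \mapsto p-1$) is routine. Since the statement explicitly cites \cite[Theorem 6.7]{Giu}, in the paper this lemma is quoted without proof, so in practice I would simply record the above as the standard justification.
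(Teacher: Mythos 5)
The paper states this lemma without proof, citing \cite[Theorem 6.7]{Giu}, so your overall plan (Caccioppoli via \eqref{eq:A1}--\eqref{eq:A2}, Sobolev--Poincar\'e, Giaquinta--Giusti hole-filling, Gehring) is indeed the standard route behind the cited result. However, there is one genuine error in your write-up: the final step. You claim
\begin{equation*}
\left( \fint_{B_\rho(y)} |\nabla w|^{p} \, dx \right)^{1/p} \le C \left( \fint_{B_\rho(y)} |\nabla w|^{p-1} \, dx \right)^{1/(p-1)}
\end{equation*}
``is just Jensen/H\"older since $p-1<p$.'' Jensen/H\"older for normalized averages gives exactly the \emph{opposite} inequality, $\bigl(\fint |f|^{p-1}\bigr)^{1/(p-1)} \le \bigl(\fint |f|^{p}\bigr)^{1/p}$; what you need is itself a reverse H\"older inequality and does not come for free. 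Lowering the exponent on the right-hand side below the natural one is a separate self-improving property of reverse H\"older inequalities, proved by another covering/iteration argument of Giaquinta--Giusti type (or quoted from the version of Gehring's lemma that already allows an arbitrary positive exponent on the right, which is how \cite[Theorem 6.7]{Giu} is usually formulated). The point is quantitatively relevant here: in the singular range $\frac{3n-2}{2n-1}<p\le 2-\frac{1}{n}$ one has $p-1\le 1-\frac1n<1$, whereas the exponent your Sobolev--Poincar\'e step delivers is $q=\max\bigl(\frac{np}{n+p},1\bigr)\ge 1>p-1$, so the exponent $p-1$ lies strictly below anything your intermediate reverse H\"older inequality reaches, and the extra lowering step cannot be skipped.

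A second, minor point: you take $p_*=\frac{np}{n+p}$ in Sobolev--Poincar\'e, but in this range of $p$ one can have $p_*<1$ (e.g.\ $n=2$, $p=3/2$ gives $p_*=6/7$), and Sobolev--Poincar\'e with a sub-unit gradient exponent is not available for general $W^{1,p}$ functions. The standard fix is to use $q=\max(p_*,1)<p$, noting that when $p_*<1$ one has $p<\frac{n}{n-1}=1^*$ so the $q=1$ version suffices. With these two repairs the argument is the classical one and the dependence of $\Theta$ and $C$ on $n,p,\alpha,\beta$ only is as you describe.
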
 

The next lemma gives an estimate for the difference $\nabla u-\nabla w$. These results were described and proved in \cite[Lemma 2.2, 2.3]{55QH4}. 
\begin{lemma}
\label{lem:estimateinter}
Let $w$ be solution to \eqref{111120146}. Then, for any $\frac{2-p}{2}\leq \gamma_0<\frac{(p-1)n}{n-1}\leq 1$, there is a constant  $C=C(n,p,\alpha,\beta)>0$ such that:
	\begin{align}
	\label{eq:estimateinter}
	\begin{split}
	\left(	\fint_{B_{2R}(x_0)}|\nabla (u-w)|^{\gamma_0}dx\right)^{\frac{1}{\gamma_0}}&\leq C\left[ \frac{|\mu|(B_{2R}(x_0))}{R^{n-1}} \right]^{\frac{1}{p-1}}\\
	&\qquad +	C \frac{|\mu|(B_{2R}(x_0))}{R^{n-1}} \left(	\fint_{B_{2R}(x_0)}|\nabla u|^{\gamma_0}dx\right)^{\frac{2-p}{\gamma_0}}.
	\end{split}
	\end{align}
\end{lemma}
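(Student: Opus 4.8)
\textbf{Proof proposal for Lemma \ref{lem:estimateinter}.}

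The plan is to adapt the comparison argument of Duzaar--Mingione and Phuc (as carried out in \cite{55QH4}) that controls $\nabla(u-w)$ on $B_{2R}=B_{2R}(x_0)$ by the normalized measure mass $|\mu|(B_{2R})/R^{n-1}$, keeping track of the extra nonlinear factor that appears in the singular range $p\le 2$. Set $v=u-w\in W_0^{1,\gamma_0}(B_{2R})$ (using Definition \ref{def:truncature} to make sense of $\nabla u$ and the fact that $w\in W_0^{1,p}(B_{2R})+u$). The starting point is the monotonicity inequality \eqref{eq:A2}: for a.e.\ $x$,
\begin{align*}
\alpha\bigl(|\nabla u|^2+|\nabla w|^2\bigr)^{\frac{p-2}{2}}|\nabla v|^2\le \langle A(x,\nabla u)-A(x,\nabla w),\nabla u-\nabla w\rangle.
\end{align*}
Because $p<2$, the weight $(|\nabla u|^2+|\nabla w|^2)^{(p-2)/2}$ is \emph{small} when the gradients are large, which is exactly why a bare $L^{p}$ estimate is unavailable and one must instead test with truncations of $v$ and interpolate; this is the source of the second term on the right of \eqref{eq:estimateinter}.

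First I would test the renormalized formulation of \eqref{eq:elliptictype} and the equation \eqref{111120146} against the truncation $T_k(v)$ (more precisely, against $\varphi=T_k(v)$ in the weak/renormalized identities, handling the concentration measures $\lambda_k^\pm$ on $\{|u|=k\}$ in the usual way and then letting the truncation parameters go to infinity), to obtain the Caccioppoli-type bound
\begin{align*}
\int_{\{|v|<k\}}\langle A(x,\nabla u)-A(x,\nabla w),\nabla v\rangle\,dx\le k\,|\mu|(B_{2R}).
\end{align*}
Combining with the monotonicity weight gives, for every $k>0$,
\begin{align*}
\int_{\{|v|<k\}}\bigl(|\nabla u|^2+|\nabla w|^2\bigr)^{\frac{p-2}{2}}|\nabla v|^2\,dx\le \frac{k}{\alpha}\,|\mu|(B_{2R}).
\end{align*}
Next I would pass from this weighted level-set estimate to an unweighted gradient bound by the standard Hölder-splitting trick: write $|\nabla v|^{\gamma_0}=\bigl[(|\nabla u|^2+|\nabla w|^2)^{(p-2)/2}|\nabla v|^2\bigr]^{\gamma_0/2}\cdot(|\nabla u|^2+|\nabla w|^2)^{\gamma_0(2-p)/4}$, apply Hölder with exponents $2/\gamma_0$ and $2/(2-\gamma_0)$ on the set $\{|v|<k\}$, and use the distributional bound on $\{|v|\ge k\}$ coming from Remark \ref{rem:nablau} (applied to $v$, which solves an equation with measure datum of mass comparable to $|\mu|(B_{2R})$) together with the weak-type $L^{(p-1)n/(n-p),\infty}$ control of $\nabla u$ and $\nabla w$. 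Optimizing over the truncation level $k$ — the level $k$ that balances the contribution from $\{|v|<k\}$ against that from $\{|v|\ge k\}$ — produces, after dividing by $\mathcal{L}^n(B_{2R})$ and rearranging, exactly the two-term bound
\begin{align*}
\Bigl(\fint_{B_{2R}}|\nabla v|^{\gamma_0}\,dx\Bigr)^{1/\gamma_0}\le C\Bigl[\frac{|\mu|(B_{2R})}{R^{n-1}}\Bigr]^{1/(p-1)}+C\,\frac{|\mu|(B_{2R})}{R^{n-1}}\Bigl(\fint_{B_{2R}}|\nabla u|^{\gamma_0}\,dx\Bigr)^{(2-p)/\gamma_0},
\end{align*}
where the exponent constraint $\frac{2-p}{2}\le\gamma_0<\frac{(p-1)n}{n-1}$ is precisely what makes both Hölder exponents admissible and the weak-$L^{(p-1)n/(n-p)}$ integrals finite.

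The main obstacle I expect is the bookkeeping in the singular regime $p<2$: one must be careful that the weighted integral $\int(|\nabla u|^2+|\nabla w|^2)^{(p-2)/2}|\nabla v|^2$ is genuinely finite (it is, on $\{|v|<k\}$, by the Caccioppoli step, but only there), and that the interpolation/optimization in $k$ does not lose a power — this is where the nonlinear factor $(\fint|\nabla u|^{\gamma_0})^{(2-p)/\gamma_0}$ is unavoidably generated rather than absorbed. A secondary technical point is the justification of testing the renormalized solution against $T_k(u-w)$ and the limit procedure in the truncation parameters (controlling the terms involving $\lambda_k^\pm$), which is routine but must be invoked carefully; since this is stated as a recalled result from \cite[Lemma 2.2, 2.3]{55QH4}, I would cite that reference for the full details of these limiting arguments and present only the structure above.
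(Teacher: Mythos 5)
The paper does not actually prove this lemma — it is recalled from \cite[Lemmas 2.2, 2.3]{55QH4} — and your sketch reconstructs precisely the comparison argument of that reference (monotonicity \eqref{eq:A2} combined with testing by $T_k(u-w)$, Hölder splitting of the degenerate weight $(|\nabla u|^2+|\nabla w|^2)^{(p-2)/2}$, and optimization over the truncation level $k$), so the approach is the same as the one the paper relies on. The only imprecision is the appeal to Remark \ref{rem:nablau} for the level sets of $v=u-w$: that remark concerns solutions of the Dirichlet problem with measure data, which $v$ is not, and the decay of $\mathcal{L}^n(\{|v|>k\})$ should instead be extracted from the Caccioppoli-type bound via Sobolev's inequality applied to $T_k(v)\in W_0^{1,1}(B_{2R})$, as is done in \cite{55QH4}.
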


The following lemma comes from the standard interior H\"older continuity of solutions, that can be found in \cite[Theorem 7.7]{Giu}.

\begin{lemma}
\label{lem:res3.3P}
Let $w$ be solution to \eqref{111120146}. Then, there exists a constant $\beta_0 = \beta_0(n,p,\alpha,\beta) \in (0,1/2]$ such that:
\begin{align*}
\left(\fint_{B_\rho(y)}{|w - \overline{w}_{B_\rho(y)}|^p dx} \right)^{\frac{1}{p}} \le C\left(\frac{\rho}{r} \right)^{\beta_0} \left( \fint_{B_r(y)}{|w - \overline{w}_{B_r(y)}|^p dx} \right)^{\frac{1}{p}},
\end{align*}
for any $y \in B_{2R}(x_0)$ with $B_\rho(y) \subset B_r(y) \subset B_{2R}(x_0)$. Moreover, there exists a constant $C=C(n,p,\alpha,\beta)>0$ such that we have the following estimate:
\begin{align}
\label{eq:nablaw_estimate}
\left(\fint_{B_\rho(y)}{|\nabla w|^p dx} \right)^{\frac{1}{p}} \le C \left(\frac{\rho}{r} \right)^{\beta_0-1}\left(\fint_{B_r(y)}{|\nabla w|^p dx} \right)^{\frac{1}{p}},
\end{align}
holds for any $y \in B_{2R}(x_0)$ such that $B_\rho(y) \subset B_r(y) \subset B_{2R}(x_0)$.
\end{lemma}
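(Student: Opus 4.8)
\textbf{Proof plan for Theorem \ref{lem:res3.3P}... } \emph{(the final statement is Lemma \ref{lem:res3.3P}, the interior H\"older continuity estimate).}

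The plan is to derive both displayed inequalities from a single Campanato-type decay estimate for $A$-harmonic functions, combined with the Caccioppoli inequality. First I would recall that $w$ solving \eqref{111120146} is, in particular, a local weak solution of the homogeneous equation $-\operatorname{div}(A(x,\nabla w))=0$ in $B_{2R}(x_0)$, so the De Giorgi--Nash--Moser theory applies: there is an exponent $\beta_0=\beta_0(n,p,\alpha,\beta)\in(0,1/2]$ and a constant $C$ such that $w\in C^{0,\beta_0}_{\mathrm{loc}}(B_{2R}(x_0))$ with the oscillation decay controlled by the $L^p$ mean oscillation. This is exactly the content cited from \cite[Theorem 7.7]{Giu}; I would state the Morrey-type characterization
$\bigl(\fint_{B_\rho(y)}|w-\overline w_{B_\rho(y)}|^p\bigr)^{1/p}\le C(\rho/r)^{\beta_0}\bigl(\fint_{B_r(y)}|w-\overline w_{B_r(y)}|^p\bigr)^{1/p}$,
which follows by writing the oscillation of a H\"older function over a ball and integrating, or directly from the Campanato space embedding. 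This gives the first inequality verbatim.

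For the gradient estimate \eqref{eq:nablaw_estimate} I would proceed in two steps. Fix $y$ and radii $\rho<r$ with $B_\rho(y)\subset B_r(y)\subset B_{2R}(x_0)$. If $\rho\ge r/4$ the estimate is trivial by enlarging the constant and using $(\rho/r)^{\beta_0-1}\ge (1/4)^{\beta_0-1}$, so assume $2\rho\le r/2$. Apply the interior Caccioppoli inequality on the pair of balls $B_\rho(y)\subset B_{2\rho}(y)$ with the competitor $w-\overline w_{B_{2\rho}(y)}$: since constants solve the equation, one gets
$\fint_{B_\rho(y)}|\nabla w|^p\,dx\le \frac{C}{\rho^p}\fint_{B_{2\rho}(y)}|w-\overline w_{B_{2\rho}(y)}|^p\,dx.$
Now bound the right-hand oscillation by the decay estimate just proved, with source radius $r/2$ and target radius $2\rho$, picking up a factor $(4\rho/r)^{p\beta_0}$, and finally absorb the oscillation at radius $r/2$ back into $\fint_{B_r(y)}|\nabla w|^p$ by Poincar\'e's inequality $\fint_{B_{r/2}(y)}|w-\overline w_{B_{r/2}(y)}|^p\le C r^p \fint_{B_{r/2}(y)}|\nabla w|^p\le C r^p\fint_{B_r(y)}|\nabla w|^p$. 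Collecting the powers of $\rho/r$ gives the exponent $p(\beta_0-1)$ inside, i.e. $(\rho/r)^{\beta_0-1}$ after taking $p$-th roots, which is the claimed bound.

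The only genuine obstacle is making sure the Caccioppoli inequality is available in the degenerate/singular range $1<p\le 2-\tfrac1n$; this is standard for operators satisfying \eqref{eq:A1}--\eqref{eq:A2} (test with $\eta^p(w-\overline w)$ and use the monotonicity and growth bounds, then Young's inequality to absorb), and since the lemma is quoted from \cite{Giu} I would simply invoke it rather than reprove it. A minor bookkeeping point is the chain of radii $2\rho\le r/2$: one should either impose this or handle the complementary regime $\rho\in[r/4,r)$ separately as indicated, which is harmless. No boundary behaviour enters here because \eqref{111120146} is posed with $w=u$ on $\partial B_{2R}$ but all the balls in the statement sit strictly inside $B_{2R}(x_0)$, so this is a purely interior argument.
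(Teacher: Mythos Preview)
Your proposal is correct and in fact supplies more detail than the paper itself: the authors do not prove Lemma~\ref{lem:res3.3P} at all but merely cite it from \cite[Theorem~7.7]{Giu} (see the sentence preceding the lemma, where Lemmas~\ref{111120147}, \ref{lem:estimateinter}, \ref{lem:res3.3P}, \ref{lem:res3.4P} are explicitly said to be stated ``without proofs''). Your sketch---De Giorgi--Nash--Moser for the oscillation decay, then Caccioppoli combined with the first estimate and Poincar\'e to obtain \eqref{eq:nablaw_estimate}---is precisely the standard argument underlying that reference, and the chain of radii and the trivial regime $\rho\ge r/4$ are handled correctly.
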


It can be noticed that the denotation $\overline{w}_{B_\rho(y)}$ indicates the average integral of $w$ over the ball $B_\rho(y)$. Applying Lemma \ref{lem:estimateinter}, the inequality \eqref{eq:nablaw_estimate} can be further improved as in the following lemma.

\begin{lemma}
\label{lem:res3.4P}
Let $w$ be solution to \eqref{111120146}. Then, for any $\Theta \in (0,p]$, there exist constants $\beta_0 = \beta_0(n,p,\alpha,\beta)  \in (0,1/2]$, $C = C(n,p,\alpha,\beta,\Theta)>0$, there holds
\begin{align*}
\left( \fint_{B_\rho(y)}{|\nabla w|^\Theta dx} \right)^{\frac{1}{\Theta}} \le C \left(\frac{\rho}{r} \right)^{\beta_0-1} \left(\fint_{B_r(y)}{|\nabla w|^\Theta dx} \right)^{\frac{1}{\Theta}},
\end{align*}
for any $y \in B_{2R}(x_0)$ such that $B_\rho(y) \subset B_r(y) \subset B_{2R}(x_0)$.
\end{lemma}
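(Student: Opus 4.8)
\textbf{Proof proposal for Lemma \ref{lem:res3.4P}.}

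The plan is to bootstrap from the case $\Theta = p$, which is precisely inequality \eqref{eq:nablaw_estimate} of Lemma \ref{lem:res3.3P}, down to all exponents $\Theta \in (0,p)$ by combining the self-improving reverse H\"older inequality of Lemma \ref{111120147} with a standard iteration/covering argument. First I would dispose of the endpoint: for $\Theta = p$ the claimed estimate is exactly Lemma \ref{lem:res3.3P}, so nothing is to be done. For $\Theta \in (0,p)$, the inequality goes in the ``wrong'' direction relative to Jensen (a lower power controlling itself on a larger ball), so the key mechanism must be a reverse-H\"older-type comparison between the $L^\Theta$ and $L^p$ averages of $|\nabla w|$.

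The core step is to establish that for balls $B_r(y) \subset B_{2R}(x_0)$ one has a reverse H\"older inequality of the form
\begin{align*}
\left(\fint_{B_{r/2}(y)}|\nabla w|^p\,dx\right)^{\frac{1}{p}} \le C \left(\fint_{B_r(y)}|\nabla w|^\Theta\,dx\right)^{\frac{1}{\Theta}}.
\end{align*}
For $\Theta \in [p-1,p]$ this follows directly from Lemma \ref{111120147} (take the exponent $p$ on the left, which is below the Gehring exponent $\Theta_{\mathrm{Geh}}>p$, and the exponent $\Theta$ on the right in place of $p-1$, noting the argument of Gehring's lemma works for any exponent on the right-hand side after a trivial reindexing, or alternatively interpolate). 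For $0 < \Theta < p-1$ one reaches the conclusion by a further self-improvement: a reverse H\"older inequality with exponent gap of any fixed size propagates down to arbitrarily small exponents via Gehring's lemma applied iteratively (this is the standard ``$L^q$-norms for $q$ small are all comparable'' phenomenon for solutions of homogeneous equations). Once this reverse H\"older bound is in hand, the desired estimate is immediate: given $B_\rho(y) \subset B_r(y) \subset B_{2R}(x_0)$, apply \eqref{eq:nablaw_estimate} with exponent $p$ on the pair $B_\rho(y) \subset B_{r/2}(y)$ (adjusting $r$ by the harmless factor $2$), then Jensen's inequality to pass from the $L^p$ average on $B_\rho(y)$ down to the $L^\Theta$ average on $B_\rho(y)$ on the left, and the reverse H\"older inequality above to pass from the $L^p$ average on $B_{r/2}(y)$ up to the $L^\Theta$ average on $B_r(y)$ on the right. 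Chaining these three estimates yields exactly
\begin{align*}
\left(\fint_{B_\rho(y)}|\nabla w|^\Theta\,dx\right)^{\frac{1}{\Theta}} \le C\left(\frac{\rho}{r}\right)^{\beta_0-1}\left(\fint_{B_r(y)}|\nabla w|^\Theta\,dx\right)^{\frac{1}{\Theta}},
\end{align*}
with the same decay exponent $\beta_0 - 1$ and a constant $C$ now additionally depending on $\Theta$.

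The main obstacle I anticipate is the low-exponent regime $0 < \Theta < p-1$: Gehring's lemma as stated in Lemma \ref{111120147} only guarantees a reverse H\"older inequality with the specific right-hand exponent $p-1$, so extending it to arbitrarily small $\Theta$ requires either citing a sharper form of the self-improving property or running a short iteration. One should also be slightly careful that the factor $2$ lost in replacing $B_{r/2}(y)$ by $B_r(y)$ (and the analogous shrinkage in the reverse H\"older step) is genuinely harmless — it is, since it only changes the multiplicative constant, not the power $(\rho/r)^{\beta_0-1}$, because $\rho \le r/2 \le r$ and $(\rho/r)^{\beta_0-1} \le 2^{1-\beta_0}(\rho/(r/2))^{\beta_0-1}$ only when the inequality is rescaled appropriately; the clean way is to first prove the estimate for $\rho \le r/4$ say and then note it is trivial (with constant absorbing the ratio) for $r/4 < \rho \le r$. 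Apart from this bookkeeping the argument is routine once the reverse H\"older inequality is secured.
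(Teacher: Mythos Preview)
The paper does not actually supply a proof of Lemma \ref{lem:res3.4P}: it explicitly lists this lemma among those ``repeated \ldots\ without proofs,'' giving only the one-line hint that \eqref{eq:nablaw_estimate} ``can be further improved'' (the reference in that sentence to Lemma \ref{lem:estimateinter} is almost certainly a misprint for the Gehring-type Lemma \ref{111120147}). Your proposal --- Jensen on the left, the $L^p$ decay \eqref{eq:nablaw_estimate} in the middle, and a reverse H\"older inequality on the right --- is precisely the standard argument implied by that hint, and it is correct. Your identification of the low-exponent regime $0<\Theta<p-1$ as the only nontrivial point is accurate, and the resolution you suggest (the self-improving property of reverse H\"older classes, which pushes the right-hand exponent in Lemma \ref{111120147} down to any $\Theta>0$ at the cost of enlarging the support ratio) is exactly what is needed; this is classical and can be cited from \cite{Giu}. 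The bookkeeping about the factor $2$ and the trivial range $\rho \sim r$ is handled just as you say.
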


\begin{lemma}
\label{lem:res7.2H}
Let $\beta_0 \in (0,1/2]$ be as in Lemmas \ref{lem:res3.3P} and \ref{lem:res3.4P}. Then, for any $\delta \in \left[-\frac{n-p}{p-1},\beta_0\right)$, there exists a constant $C = C(n,p,\alpha,\beta,c_0,\beta_0)>0$ such that for any $B_\rho(y) \subset B_r(y) \subset\subset \Omega$:
\begin{align}
\left( \int_{B_\rho(y)}{|\nabla u|^{\gamma_0}dx}\right)^{\frac{1}{\gamma_0}}\leq  C \left(\mathbf{M}_{\theta}^{T_0}(|\mu|)(y) \right)^{\frac{1}{p-1}}\rho^{\frac{n}{\gamma_0}+\delta-1},
\end{align}
where $\theta=1+(p-1)(1-\delta)$, $T_0 = diam(\Omega)$, and $0<\rho<T_0$.
\end{lemma}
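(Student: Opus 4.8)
The plan is to prove the pointwise estimate by a standard dyadic iteration (Campanato-type argument) that combines the oscillation decay for the homogeneous comparison solution $w$ (Lemmas~\ref{lem:res3.3P} and \ref{lem:res3.4P}) with the comparison estimate of Lemma~\ref{lem:estimateinter}, and then absorbs the measure-data contribution into the fractional maximal function $\mathbf{M}_\theta^{T_0}(|\mu|)$. Fix $y$ and $0<\rho<T_0$, and choose a radius $\bar r$ with $B_{\bar r}(y)\subset\subset\Omega$ on which the interior theory applies. First I would define, for dyadic radii $r_j = 2^{-j}\bar r$, the excess quantity $E_j := \left(\fint_{B_{r_j}(y)}|\nabla u|^{\gamma_0}\,dx\right)^{1/\gamma_0}$, and estimate $E_j$ in terms of $E_{j+1}$ (or vice versa) by inserting the comparison solution $w$ on each annulus: split $|\nabla u|\le |\nabla u-\nabla w|+|\nabla w|$, control the first term by Lemma~\ref{lem:estimateinter}, and control the decay/growth of the $w$-term by the scaling inequality of Lemma~\ref{lem:res3.4P} with exponent $\Theta=\gamma_0\le p$ (note $\gamma_0<\tfrac{(p-1)n}{n-1}\le 1<p$, so this is legitimate), which yields a factor $(r_j/r_{j-1})^{\beta_0-1}=2^{1-\beta_0}$ when passing from a larger ball to a smaller one.

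The heart of the matter is the bookkeeping of the exponents. On the ball $B_{r_j}(y)$ the measure term from Lemma~\ref{lem:estimateinter} is $\left[\tfrac{|\mu|(B_{r_j}(y))}{r_j^{n-1}}\right]^{1/(p-1)}$, and by the very definition of $\mathbf{M}_\theta^{T_0}$ this is bounded by $\left(\mathbf{M}_\theta^{T_0}(|\mu|)(y)\right)^{1/(p-1)} r_j^{(\theta-1)/(p-1)-1}$; with $\theta=1+(p-1)(1-\delta)$ this is exactly $\left(\mathbf{M}_\theta^{T_0}(|\mu|)(y)\right)^{1/(p-1)} r_j^{-\delta}$, i.e.\ it scales like $r_j^{\,n/\gamma_0}\cdot r_j^{\,\delta-1-n/\gamma_0}$, matching the target power $\rho^{n/\gamma_0+\delta-1}$ after division by $r_j^{n/\gamma_0}$. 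The remaining ``nonlinear'' term in Lemma~\ref{lem:estimateinter}, namely $\tfrac{|\mu|(B_{r_j})}{r_j^{n-1}}\left(\fint_{B_{r_j}}|\nabla u|^{\gamma_0}\right)^{(2-p)/\gamma_0}$, carries the factor $E_j^{\,2-p}$ with $2-p\ge 0$ small in the singular range, and I would dispose of it either by Young's inequality (absorbing a small power of $E_j$ into the left side across the iteration) or, more simply, by first invoking Remark~\ref{rem:nablau} to get an a priori global bound on $\|\nabla u\|_{L^{(p-1)n/(n-p),\infty}}$, hence on each $E_j$ at the starting scale, and feeding that crude bound into the nonlinear term so that it too is controlled by $\left(\mathbf{M}_\theta^{T_0}(|\mu|)(y)\right)^{1/(p-1)}$ times the right power of $r_j$. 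The condition $\delta<\beta_0$ is precisely what makes the geometric series converge: the $w$-decay gives contraction rate $2^{1-\beta_0}$ while the inhomogeneous terms grow only like $2^{1-\delta}$ (or slower) per step, so summing over $j$ is a convergent geometric series with ratio $2^{\delta-\beta_0}<1$; the lower constraint $\delta\ge -\tfrac{n-p}{p-1}$ guarantees $\theta\le n$ so that $\mathbf{M}_\theta^{T_0}$ is well defined.

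After the iteration I would sum from the scale comparable to $\rho$ down to the starting scale $\bar r$, obtaining
\[
E(\rho):=\left(\fint_{B_\rho(y)}|\nabla u|^{\gamma_0}\,dx\right)^{1/\gamma_0}\le C\left(\mathbf{M}_\theta^{T_0}(|\mu|)(y)\right)^{1/(p-1)}\rho^{\,\delta-1}+C\left(\frac{\rho}{\bar r}\right)^{\beta_0-1}E(\bar r),
\]
and then handle the last term: either $\bar r$ can be taken comparable to $\mathrm{diam}(\Omega)$ using the uniform $p$-thickness (through the global boundary theory implicit in the setup) so that $E(\bar r)$ is itself dominated by $\left(\mathbf{M}_\theta^{T_0}(|\mu|)(y)\right)^{1/(p-1)}\bar r^{\,\delta-1}$, or one localizes and absorbs it. Multiplying through by $\rho^{n/\gamma_0}$ converts the averaged $L^{\gamma_0}$ norm to the stated non-averaged one with the exponent $\tfrac{n}{\gamma_0}+\delta-1$. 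The main obstacle I anticipate is not the homogeneous decay (that is exactly Lemma~\ref{lem:res3.4P}) but the clean treatment of the nonlinear comparison term $E_j^{\,2-p}$ across infinitely many scales without losing the correct power of the maximal function — making the Young-inequality absorption uniform in $j$, and verifying that the constant does not blow up as $\delta\uparrow\beta_0$, which forces $C$ to depend on $\beta_0$ (as the statement indeed records).
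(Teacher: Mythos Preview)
Your approach is essentially the same as the paper's: split $|\nabla u|\le |\nabla(u-w)|+|\nabla w|$, use Lemma~\ref{lem:estimateinter} for the first piece and Lemma~\ref{lem:res3.4P} (with exponent $\gamma_0$) for the decay of the second, handle the nonlinear term $E^{2-p}$ by Young's inequality, and iterate. The only packaging difference is that the paper invokes the continuous iteration Lemma~\ref{lem:hanlin} (the Han--Lin lemma) on the function $\Phi(t)=\bigl(\int_{B_t(y)}|\nabla u|^{\gamma_0}\bigr)^{1/\gamma_0}$, arriving at
\[
\Phi(\rho)\le C\Bigl[\Bigl(\tfrac{\rho}{r}\Bigr)^{n/\gamma_0+\beta_0-1}+\varepsilon\Bigr]\Phi(r)+C_\varepsilon r^{\,n/\gamma_0+\delta-1}\bigl(\mathbf{M}_\theta^{T_0}(|\mu|)(y)\bigr)^{1/(p-1)},
\]
whereas you carry out the equivalent discrete dyadic summation by hand. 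Either way the condition $\delta<\beta_0$ is exactly what makes the iteration close.

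Two small corrections. First, your ``option (b)'' for the nonlinear term---feeding the global Marcinkiewicz bound from Remark~\ref{rem:nablau} directly into $E_j^{2-p}$---does not give the right pointwise dependence on $\mathbf{M}_\theta^{T_0}(|\mu|)(y)$ or the right power of $r_j$; you really need the Young-inequality absorption (your option (a)), which is precisely what the paper does. Second, the initial-scale term $E(\bar r)$ is not handled via the $p$-thickness/boundary theory (this lemma is purely interior); the paper simply lets $r\uparrow T_0$, uses Remark~\ref{rem:nablau} to bound $\bigl(T_0^{-n}\int_\Omega|\nabla u|^{\gamma_0}\bigr)^{1/\gamma_0}\le C\bigl(|\mu|(\Omega)/T_0^{n-1}\bigr)^{1/(p-1)}$, and observes that $T_0^{\,1-\delta}\bigl(|\mu|(\Omega)/T_0^{n-1}\bigr)^{1/(p-1)}\le \bigl(\mathbf{M}_\theta^{T_0}(|\mu|)(y)\bigr)^{1/(p-1)}$ by the definition of the truncated maximal function.
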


In order to prove this Lemma \ref{lem:res7.2H}, it will be necessary to refer to Lemma \ref{lem:hanlin} in \cite[Lemma 1.4]{Han-Lin} as follows, where its proof can be found therein.
\begin{lemma}
\label{lem:hanlin}
Let $\phi(t)$ be a nonnegative and nondecreasing function on $[0,R]$. Suppose that 
$$
\phi(\rho) \le A \left[\left(\frac{\rho}{r} \right)^\alpha + \varepsilon \right] \phi(r) + B r^\beta,
$$
for any $0<\rho \le \theta r <R$, with $A, B, \alpha, \beta$ nonnegative constants and $\theta\in (0,1)$ and $\beta<\alpha$. Then, for any $\gamma \in (\beta,\alpha)$, there exists a constant $\varepsilon_0=\varepsilon_0(A,\alpha,\beta,\gamma,\theta)$ such that if $\varepsilon<\varepsilon_0$ we have for all $0<\rho \le r \le R$:
$$
\phi(\rho) \le C \left[\left(\frac{\rho}{r} \right)^\gamma \phi(r) + B\rho^\beta \right],
$$
where $C$ is a positive constant depending on $A,\alpha,\beta,\gamma$. In particular, we have for any $0<r \le R$:
$$
\phi(r) \le C \left[\frac{\phi(R)}{R^\gamma} r^\gamma + Br^\beta \right].
$$
\end{lemma}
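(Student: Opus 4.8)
This is the classical geometric iteration lemma, and the plan is to prove it by iterating the hypothesised inequality on a suitably chosen sequence of dyadic scales. Fix $\gamma\in(\beta,\alpha)$. Since $\alpha>\gamma$, we have $\tau^{\alpha-\gamma}\to 0$ as $\tau\to 0^{+}$, so I would first select a scale $\tau\in(0,\theta]$ small enough that $2A\tau^{\alpha-\gamma}\le 1$, equivalently $A\tau^{\alpha}\le\tfrac12\tau^{\gamma}$; the value of $\tau$ then depends only on $A,\alpha,\gamma$ (and on $\theta$ only when the latter is even smaller). With $\tau$ fixed I would put $\varepsilon_{0}:=\tau^{\gamma}/(2A)$, so that for every $\varepsilon<\varepsilon_{0}$ the contraction factor obeys $A\bigl(\tau^{\alpha}+\varepsilon\bigr)\le\tfrac12\tau^{\gamma}+\tfrac12\tau^{\gamma}=\tau^{\gamma}<1$. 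Applying the hypothesis with $\rho=\tau r$, which is admissible because $\tau\le\theta$ and $\theta r<R$ for every $r\le R$ (recall $\theta\in(0,1)$), produces the one-step contraction
\[
\phi(\tau r)\le \tau^{\gamma}\,\phi(r)+B\,r^{\beta},\qquad 0<r\le R.
\]

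Next I would iterate this along the scales $\tau^{k}r$. Setting $a_{k}:=\phi(\tau^{k}r)$ gives $a_{k+1}\le\tau^{\gamma}a_{k}+B r^{\beta}\tau^{k\beta}$, and a short induction together with the summation of a geometric series — whose convergence is exactly where the assumption $\beta<\gamma$ enters — yields
\[
\phi(\tau^{k}r)\le \tau^{k\gamma}\,\phi(r)+\frac{\tau^{-\beta}}{1-\tau^{\gamma-\beta}}\,B\,\bigl(\tau^{k}r\bigr)^{\beta},\qquad k\ge 0 .
\]
To pass from these discrete scales to the stated estimate, take $0<\rho\le r\le R$. If $\rho\in(\tau r,r]$, then monotonicity of $\phi$ gives $\phi(\rho)\le\phi(r)\le\tau^{-\gamma}(\rho/r)^{\gamma}\phi(r)$, which already has the desired form. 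Otherwise choose the unique $k\ge 1$ with $\tau^{k+1}r<\rho\le\tau^{k}r$; since $\phi$ is nondecreasing, $\phi(\rho)\le\phi(\tau^{k}r)$, and substituting the bounds $\tau^{k\gamma}\le\tau^{-\gamma}(\rho/r)^{\gamma}$ and $\tau^{k}r\le\tau^{-1}\rho$ into the last display gives
\[
\phi(\rho)\le C\Bigl[(\rho/r)^{\gamma}\,\phi(r)+B\,\rho^{\beta}\Bigr],
\]
with $C$ depending only on $A,\alpha,\beta,\gamma$ through the fixed scale $\tau$. The final ``in particular'' assertion is then just the case $r=R$ of this inequality.

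I do not foresee a genuine obstacle here; the argument is routine once the scale is tuned. The three points deserving some care are: (i) the coupled choice of $\tau$ and $\varepsilon_{0}$ making $A(\tau^{\alpha}+\varepsilon)\le\tau^{\gamma}<1$, which uses $\alpha>\gamma$; (ii) the convergence of the geometric series in the inductive step, which relies on $\beta<\gamma$; and (iii) checking throughout that the admissibility restriction $0<\rho\le\theta r<R$ of the hypothesis is respected, which is guaranteed by $\tau\le\theta$ together with $\theta<1$. Everything else is elementary bookkeeping.
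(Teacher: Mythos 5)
Your proof is correct. The paper does not prove this lemma at all --- it simply cites it as \cite[Lemma 1.4]{Han-Lin} --- and your argument (tune a single scale $\tau\le\theta$ with $A(\tau^{\alpha}+\varepsilon)\le\tau^{\gamma}<1$, iterate along $\tau^{k}r$, sum the geometric series using $\beta<\gamma$, then interpolate to arbitrary $\rho$ by monotonicity) is precisely the standard proof found in that reference, with all admissibility checks handled properly. The only cosmetic remark is that your constant $C$ inherits a dependence on $\theta$ through the choice $\tau\le\theta$, which the lemma's statement omits; this imprecision is already present in the classical formulation and is not a defect of your argument.
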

\begin{proof}[Proof of Lemma \ref{lem:res7.2H}]

First of all, for $0<\rho\le r/2$, let us take $B_{r}(y) \subset\subset \Omega$, where $B_\rho(y) \subset B_{r}(y)$. By making use of Lemma \ref{lem:estimateinter} with $B_{2R} = B_{r}(y)$, one gives:
\begin{align}\label{eq:bylem1}
\left(\fint_{B_{r}(y)}{|\nabla (u-w)|^{\gamma_0}dx} \right)^{\frac{1}{\gamma_0}}&\leq C\left[ \frac{|\mu|(B_{r}(y))}{r^{n-1}} \right]^{\frac{1}{p-1}}\\
	&\qquad +	C \frac{|\mu|(B_{r}(y))}{r^{n-1}} \left(	\fint_{B_{r}(y)}|\nabla u|^{\gamma_0}dx\right)^{\frac{2-p}{\gamma_0}},
\end{align}
and applying the Lemma \ref{lem:res3.3P} with $B_\rho(y) \subset B_{r}(y) \subset B_{2R}(x_0)$ and $p=\gamma_0$ shows that:
\begin{align}\label{eq:bylem2}
\left(\fint_{B_\rho(y)}{|\nabla w|^{\gamma_0}} \right)^{\frac{1}{\gamma_0}} \le C \left(\frac{\rho}{r} \right)^{\beta_0-1}\left(\fint_{B_{2r/3}(y)}{|\nabla w|^{\gamma_0}} \right)^{\frac{1}{\gamma_0}}.
\end{align}
Combining \eqref{eq:bylem1}, \eqref{eq:bylem2} with the fact that
\begin{align*}
 \int_{B_{2r/3}(y)}{|\nabla w|^{\gamma_0}}dx \le C \int_{B_{r}(y)}{|\nabla u|^{\gamma_0}}dx.
\end{align*}
we obtain
\begin{align*}
\left(\fint_{B_\rho(y)}{|\nabla u|^{\gamma_0}dx} \right)^{\frac{1}{\gamma_0}} &\leq \left(\fint_{B_\rho(y)}{|\nabla w|^{\gamma_0} dx} \right)^{\frac{1}{\gamma_0}} + \left(\fint_{B_\rho(y)}{|\nabla u-\nabla w|^{\gamma_0}dx} \right)^{\frac{1}{\gamma_0}}\\ &\leq C\left(\frac{\rho}{r} \right)^{\beta_0-1} \left(\fint_{B_{r(y)}}{|\nabla u|^{\gamma_0}dx} \right)^{\frac{1}{\gamma_0}}  + C\left(\frac{|\mu|(B_{r}(y))}{r^{n-1}} \right)^{\frac{1}{p-1}}\\ ~~~~~~ &+ C\frac{|\mu|(B_{r}(y))}{r^{n-1}} \left(\fint_{B_{r(y)}}{|\nabla u|^{\gamma_0}}dx \right)^{\frac{2-p}{\gamma_0}},
\end{align*}
which implies
\begin{align*}
\begin{split}
\left( \int_{B_\rho(y)}{|\nabla u|^{\gamma_0}dx}\right)^{\frac{1}{\gamma_0}} &\leq C\left(\frac{\rho}{r} \right)^{\frac{n}{\gamma_0}+\beta_0-1} \left(\int_{B_{r(y)}}{|\nabla u|^{\gamma_0}dx} \right)^{\frac{1}{\gamma_0}} \\ ~~~~~&+ C \rho^{\frac{n}{\gamma_0}}\left(\frac{|\mu|(B_{r}(y))}{r^{n-1}} \right)^{\frac{1}{p-1}}\\&~~+ C\rho^{\frac{n(p-1)}{\gamma_0}}\frac{|\mu|(B_{r}(y))}{r^{n-1}} \left(\frac{\rho}{r}\right)^{\frac{n(2-p)}{\gamma_0}}\left(\int_{B_{r(y)}}{|\nabla u|^{\gamma_0}}dx \right)^{\frac{2-p}{\gamma_0}}.
\end{split}
\end{align*}
Using H\"older's inequality for the last term, one finds 
\begin{align}
\label{eq:btholder}
\begin{split}
\left( \int_{B_\rho(y)}{|\nabla u|^{\gamma_0}dx}\right)^{\frac{1}{\gamma_0}} &\leq C\left(\frac{\rho}{r} \right)^{\frac{n}{\gamma_0}+\beta_0-1} \left(\int_{B_{r(y)}}{|\nabla u|^{\gamma_0}dx} \right)^{\frac{1}{\gamma_0}} \\ ~~~~~~~&+ C_\varepsilon \rho^{\frac{n}{\gamma_0}}\left(\frac{|\mu|(B_{r}(y))}{r^{n-1}} \right)^{\frac{1}{p-1}}\\&~~+ \varepsilon \left(\frac{\rho}{r}\right)^{\frac{n}{\gamma_0}}\left(\int_{B_{r(y)}}{|\nabla u|^{\gamma_0}}dx \right)^{\frac{1}{\gamma_0}}.
\end{split}
\end{align}
The repeated application of Lemma \ref{lem:hanlin} enables us to set function $\Phi: \mathbb{R} \to \mathbb{R}$ of any $t \in \mathbb{R}, t>0$ defined as:
\begin{align}\label{eq:Phi}
\Phi(t)=\left( \int_{B_t(y)}{|\nabla u|^{\gamma_0}dx}\right)^{\frac{1}{\gamma_0}}.
\end{align}
Thus, \eqref{eq:btholder} can be rewritten in term of function $\Phi$:
\begin{align}
\begin{split}
\Phi(\rho)\leq C\left[\left(\frac{\rho}{r} \right)^{\frac{n}{\gamma_0}+\beta_0-1} +\varepsilon\right]\Phi(r) + C_\varepsilon \rho^{\frac{n}{\gamma_0}}\left(\frac{|\mu|(B_{r}(y))}{r^{n-1}} \right)^{\frac{1}{p-1}}.
\end{split}
\end{align}
Therefore, for any $\delta \in \left[-\frac{n-p}{p-1},\beta_0 \right)$, it satisfies that
\begin{align*}
\begin{split}
\Phi(\rho)&\leq C\left[\left(\frac{\rho}{r} \right)^{\frac{n}{\gamma_0}+\beta_0-1} +\varepsilon\right]\Phi(r) + C_\varepsilon r^{\frac{n}{\gamma_0}+\delta-1}r^{1-\delta}\left(\frac{|\mu|(B_{r}(y))}{r^{n-1}} \right)^{\frac{1}{p-1}}\\& = C\left[\left(\frac{\rho}{r} \right)^{\frac{n}{\gamma_0}+\beta_0-1} +\varepsilon\right]\Phi(r) + C_\varepsilon r^{\frac{n}{\gamma_0}+\delta-1}\left(\frac{|\mu|(B_{r}(y))}{r^{n-1-(p-1)(1-\delta)}} \right)^{\frac{1}{p-1}}.
\end{split}
\end{align*}
Choosing  $\varepsilon>0$ small enough, then applying Lemma \ref{lem:hanlin}, with $\gamma= \frac{n}{\gamma_0}+\beta_0-1,\beta = \frac{n}{\gamma_0}+\delta-1$, with $\delta<\beta_0$ (as $\beta<\gamma$ in Lemma \ref{lem:hanlin}) and $B= \left(\mathbf{M}_{\theta}^{T_0}(|\mu|)(y) \right)^{\frac{1}{p-1}}$, for any $0<\rho<r<T_0$ it is easily seen that
\begin{align*}
\Phi(\rho) \le C \left[\left(\frac{\rho}{r} \right)^{\frac{n}{\gamma_0}+\beta_0-1} \Phi(r) + C\rho^{\frac{n}{\gamma_0}+\delta-1}  \left(\mathbf{M}_{\theta}^{T_0}(|\mu|)(y) \right)^{\frac{1}{p-1}}\right],
\end{align*}
and we thus get
\begin{align}
\label{eq:inMtheta}
\begin{split}
&\left( \int_{B_\rho(y)}{|\nabla u|^{\gamma_0}dx}\right)^{\frac{1}{\gamma_0}}\\& \leq  C \left[\left(\frac{1}{T_0} \right)^{\frac{n}{\gamma_0}+\delta-1} \left( \int_{\Omega}{|\nabla u|^{\gamma_0}dx}\right)^{\gamma_0} + \left(\mathbf{M}_{\theta}^{T_0}(|\mu|)(y) \right)^{\frac{1}{p-1}}\right] \rho^{\frac{n}{\gamma_0}+\delta-1}.
\end{split}
\end{align}
According to the Remark \ref{rem:nablau}, it gives
\begin{align*}
\left(	\frac{1}{T_0^n}\int_{\Omega}|\nabla u|^{\gamma}\right)^{1/\gamma}\leq C_\gamma \left[\frac{|\mu|(\Omega)}{T_0^{n-1}}\right]^{\frac{1}{p-1}}, \qquad \text{for any}\ \ \gamma\in \left(0,\frac{(p-1)n}{n-1}\right)
\end{align*}
which implies that
\begin{align}
\label{eq:2inM}
\begin{split}
\left(\frac{1}{T_0} \right)^{\frac{n}{\gamma_0}+\delta-1} \left( \int_{\Omega}{|\nabla u|^{\gamma_0}dx}\right)^{\frac{1}{\gamma_0}} & \leq C_\gamma \left(\frac{1}{T_0} \right)^{\delta-1} \left[\frac{|\mu|(\Omega)}{T_0^{n-1}}\right]^{\frac{1}{p-1}} \\
 & \leq C \left(\mathbf{M}_{\theta}^{T_0}(|\mu|)(y) \right)^{\frac{1}{p-1}}.
 \end{split}
\end{align}
From \eqref{eq:inMtheta} and \eqref{eq:2inM} it turns to
\begin{align*}
\left( \int_{B_\rho(y)}{|\nabla u|^{\gamma_0}dx}\right)^{\frac{1}{\gamma_0}}\leq  C \left(\mathbf{M}_{\theta}^{T_0}(|\mu|)(y) \right)^{\frac{1}{p-1}}\rho^{\frac{n}{\gamma_0}+\delta-1},
\end{align*}
and that is our desired conclusion.
\end{proof}

\subsection{Boundary Estimates}
Next, let us give some comparison estimates on the boundary, the same conclusion as interior estimates can be drawn. First, as $\mathbb{R}^n \setminus \Omega$ is uniformly $p$-thick with constants $c_0, r_0>0$, let $x_0 \in \partial \Omega$ be a boundary point and for $0<R<r_0/10$ we set $\Omega_{10R} = \Omega_{10R}(x_0) = B_{10R}(x_0) \cap \Omega$. With $u \in W^{1,p}_0(\Omega)$ being a solution to \eqref{eq:elliptictype}, we consider the unique solution $w \in u+W^{1,p}_0(\Omega_{10R})$ to the following equation:
\begin{equation}
\label{111120146*}
\left\{ \begin{array}{rcl}
- \operatorname{div}\left( {A(x,\nabla w)} \right) &=& 0 \quad ~~~\text{in}\quad \Omega_{10R}(x_0), \\ 
w &=& u\quad \quad \text{on} \quad \partial \Omega_{10R}(x_0). 
\end{array} \right.
\end{equation}
In what follows we extend $\mu$ and $u$ by zero to $\mathbb{R}^n \setminus \Omega$ and $w$ by $u$ to $\mathbb{R}^n \setminus \Omega_{10R}$. Let us recall the following Lemma \ref{111120147*}, which was stated and proved in \cite{55Ph0}. This naturally leads to Lemma \ref{111120147**} below, whose proof can be found in \cite{MP2018}.
\begin{lemma} \label{111120147*} 
Let $w$ be the solution to \eqref{111120146*}. Then, there exist  constants $\Theta=\Theta(n,p,\alpha, \beta,c_0)>p$ and $C = C(n,p,\alpha,\beta,c_0)>0$ such that the following estimate    
	\begin{equation}\label{111120148*}
	\left(\fint_{B_{\rho/2}(y)}|\nabla w|^{\Theta} dx\right)^{\frac{1}{\Theta}}\leq C\left(\fint_{B_{3\rho}(y)}|\nabla w|^{p-1} dx\right)^{\frac{1}{p-1}}
	\end{equation} 
holds for all  $B_{3\rho}(y) \subset B_{10R}(x_0)$, $y \in B_r(x_0)$. 
\end{lemma}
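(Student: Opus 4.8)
The plan is to reproduce, up to the boundary, the Gehring-type argument behind the interior statement in Lemma \ref{111120147}: first establish a \emph{starting} reverse H\"older inequality with increasing support,
\begin{align*}
\left(\fint_{B_{\rho/2}(y)}|\nabla w|^{p}\,dx\right)^{\frac{1}{p}}\le C\left(\fint_{B_{3\rho}(y)}|\nabla w|^{p-1}\,dx\right)^{\frac{1}{p-1}}
\end{align*}
for all $B_{3\rho}(y)\subset B_{10R}(x_0)$ with $y\in B_r(x_0)$, and then feed $g=|\nabla w|^{p-1}\in L^{p/(p-1)}_{\mathrm{loc}}$ into the self-improving lemma \cite[Theorem 6.7]{Giu} to upgrade the left-hand exponent from $p$ to some $\Theta=\Theta(n,p,\alpha,\beta,c_0)>p$, the exponent $p-1$ on the right being unchanged; this last step is exactly the one used for Lemma \ref{111120147}.

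To obtain the starting inequality I would split into interior and boundary balls. If $B_{3\rho}(y)\subset\Omega$, then $w$ solves the homogeneous equation on $B_{3\rho}(y)$, and testing \eqref{111120146*} against $(w-\overline{w}_{B_{\sigma}(y)})\eta^{p}$ with a standard cutoff $\eta$ — using the coercivity $\langle A(x,\xi),\xi\rangle\ge\alpha|\xi|^{p}$, which follows from \eqref{eq:A2} once one notes $A(x,0)=0$ from \eqref{eq:A1}, together with the growth bound \eqref{eq:A1} — yields a Caccioppoli inequality, which combined with the Sobolev--Poincar\'e inequality at the subcritical exponent $p_\#=\max\{1,\tfrac{np}{n+p}\}<p$ gives $\big(\fint_{B_{\sigma}(y)}|\nabla w|^{p}\big)^{1/p}\le C\big(\fint_{B_{\tau}(y)}|\nabla w|^{p_\#}\big)^{1/p_\#}$ for concentric $B_\sigma\subset B_\tau$. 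If instead $B_{3\rho}(y)\cap(\mathbb{R}^n\setminus\Omega)\ne\emptyset$, then after the agreed extensions $w$ vanishes on $(\mathbb{R}^n\setminus\Omega)\cap B_{3\rho}(y)$, so $w\eta^{p}$ is an admissible test function, Caccioppoli holds with constant $c=0$, and the role of Poincar\'e is played by a Maz'ya-type capacitary Sobolev inequality: choosing $z\in\mathbb{R}^n\setminus\Omega$ with $|z-y|\sim\rho$ and a radius $s\sim\rho$, the uniform $p$-thickness \eqref{eq:capuni} forces $\mathrm{cap}_p(\{w=0\}\cap\overline{B}_s(z),B_{2s}(z))\gtrsim c_0\,s^{n-p}$, whence $\big(\fint_{B_\tau(y)}|w|^{p}\big)^{1/p}\le C\,\tau\big(\fint_{B_\tau(y)}|\nabla w|^{p_\#}\big)^{1/p_\#}$ with $C=C(n,p,c_0)$ (if one wishes $p_\#<p$ also in the degenerate case $p=n$, one first invokes the Lewis--Mikkonen self-improvement of $p$-thickness to an exponent slightly below $p$). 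Either way one arrives at $\big(\fint_{B_{\rho/2}(y)}|\nabla w|^{p}\big)^{1/p}\le C\big(\fint_{B_{2\rho}(y)}|\nabla w|^{p_\#}\big)^{1/p_\#}$.

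In the singular range $\tfrac{3n-2}{2n-1}<p\le 2-\tfrac1n$ one has $p-1<1\le p_\#$, so passing from exponent $p_\#$ to exponent $p-1$ on the right cannot be done by Jensen's inequality and requires the classical self-improvement of a reverse H\"older inequality down to all smaller exponents: interpolate $\fint|\nabla w|^{p_\#}$ between $\fint|\nabla w|^{p}$ and $\fint|\nabla w|^{p-1}$ by H\"older, then iterate over a nested family $B_{\rho/2}\subset\cdots\subset B_{3\rho}$ with Young's inequality absorbing the $L^{p}$-term; the extra dilation (factor $3$ instead of the factor $2$ of the interior Lemma \ref{111120147}) is precisely what this iteration, together with the boundary Poincar\'e step, consumes. (When $p-1\ge p_\#$ this passage is just Jensen's inequality.) This produces the starting inequality, and \cite[Theorem 6.7]{Giu} then closes the argument. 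The \textbf{main obstacle} is the boundary Poincar\'e/Sobolev step: one must convert the capacitary hypothesis \eqref{eq:capuni} into a Poincar\'e inequality whose constant depends only on $n,p$ and $c_0$, \emph{uniformly} over all admissible $y\in B_r(x_0)$ — which needs the standard but not entirely trivial fact that $p$-thickness at a boundary point propagates to all nearby balls — whereas the Caccioppoli and Gehring ingredients are routine.
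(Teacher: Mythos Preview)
The paper does not supply its own proof of this lemma: it simply records the statement and refers to \cite{55Ph0} for the argument. Your outline---Caccioppoli estimate, Sobolev--Poincar\'e in the interior and the capacitary (Maz'ya-type) Poincar\'e inequality near $\partial\Omega$ furnished by the uniform $p$-thickness \eqref{eq:capuni}, then Gehring's self-improvement \cite[Theorem~6.7]{Giu}, followed by the standard downward self-improvement of reverse H\"older inequalities to reach the exponent $p-1$---is exactly the route taken in \cite{55Ph0} (and, for the interior version, in \cite{Giu}); so your proposal is correct and coincides with the approach the paper is invoking by citation. Your identification of the only nontrivial point, namely that \eqref{eq:capuni} yields a Poincar\'e constant depending only on $n,p,c_0$ uniformly for all admissible balls (via the Lewis self-improvement of $p$-thickness when needed), is also the crux in \cite{55Ph0}.
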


\begin{lemma} 
\label{111120147**} 
Let $w$ be the solution to \eqref{111120146*}. Then, there exist  constants $\Theta=\Theta(n,p,\alpha, \beta,c_0)>p$ and $C = C(n,p,\alpha,\beta,c_0)>0$ such that we have the following estimate      
	\begin{equation}
	\label{111120148**}
	\left(\fint_{B_{\rho/2}(y)}|\nabla w|^{\Theta} dx\right)^{\frac{1}{\Theta}}\leq C\left(\fint_{B_{2\rho/3}(y)}|\nabla w|^{p-1} dx\right)^{\frac{1}{p-1}}
	\end{equation} 
	holds for all  $B_{\rho}(y)\subset B_{10R}(x_0)$, $y \in B_r(x_0)$. 
\end{lemma}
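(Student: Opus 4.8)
The plan is to deduce \eqref{111120148**} from the ``wide-support'' reverse H\"older inequality of Lemma \ref{111120147*} by applying that inequality at a suitably small scale and then glueing the resulting local bounds together with a finite covering. The net effect is only to shrink the ball $B_{3\rho}(y)$ on the right of \eqref{111120148*} to the ball $B_{2\rho/3}(y)$: the exponent $\Theta$ is left unchanged, and the constant $C$ merely picks up an extra dependence on $n$ through the cardinality of the covering. No new information about $w$ is needed.

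In detail, I would fix $y$ and $\rho$ with $B_\rho(y)\subset B_{10R}(x_0)$ and $y\in B_r(x_0)$, and choose a covering $B_{\rho/2}(y)\subset\bigcup_{i=1}^{N}B_{\rho/36}(y_i)$ with centres $y_i\in\overline{B_{\rho/2}(y)}$ and $N=N(n)$ a purely dimensional constant. For each $i$, apply Lemma \ref{111120147*} centred at $y_i$ with radius $\rho/18$: the left-hand ball is then $B_{\rho/36}(y_i)$, the right-hand ball is $B_{\rho/6}(y_i)$, and the application is legitimate because $B_{\rho/6}(y_i)\subset B_{\rho/2+\rho/6}(y)=B_{2\rho/3}(y)\subset B_\rho(y)\subset B_{10R}(x_0)$ (for those $y_i$ that happen to fall outside the admissible region $B_r(x_0)$, the ball $B_{\rho/6}(y_i)$ lies inside $\Omega_{10R}$, so one invokes instead the interior reverse H\"older inequality of Lemma \ref{111120147}, which yields the same bound). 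Since $B_{\rho/6}(y_i)\subset B_{2\rho/3}(y)$ with $|B_{2\rho/3}(y)|/|B_{\rho/6}(y_i)|=4^{n}$, this gives
\begin{align*}
\left(\fint_{B_{\rho/36}(y_i)}|\nabla w|^{\Theta}dx\right)^{\frac{1}{\Theta}}\leq C\left(\fint_{B_{\rho/6}(y_i)}|\nabla w|^{p-1}dx\right)^{\frac{1}{p-1}}\leq C\left(\fint_{B_{2\rho/3}(y)}|\nabla w|^{p-1}dx\right)^{\frac{1}{p-1}}.
\end{align*}
Summing the $\Theta$-th powers over $i$, using the covering property and $|B_{\rho/2}(y)|\le 18^{n}|B_{\rho/36}(y_i)|$, and taking the $\Theta$-th root, one arrives at \eqref{111120148**} with the same $\Theta$ as in Lemma \ref{111120147*} and $C$ depending in addition on $n$.

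There is no genuine obstacle here; the one thing that needs care is the bookkeeping of radii in the covering. The covering radius $\rho/36$ is forced by the requirement that, after Lemma \ref{111120147*} is used at scale $\rho/18$, the enlarged balls $B_{\rho/6}(y_i)$ appearing on the right-hand side remain inside $B_{2\rho/3}(y)$, hence inside $B_{10R}(x_0)$, which is exactly what the hypotheses of Lemma \ref{111120147*} demand. Apart from Lemma \ref{111120147*} itself (and Lemma \ref{111120147} for the centres away from $\partial\Omega$), nothing further about $w$ enters the argument.
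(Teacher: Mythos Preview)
Your covering argument is the standard device for shrinking the support on the right of a reverse H\"older inequality, and it is exactly what the paper intends: the authors do not give a proof here but say that Lemma~\ref{111120147**} ``naturally leads'' from Lemma~\ref{111120147*} and refer to \cite{MP2018}, where precisely this reduction is carried out. So the approach matches.

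One small point of care: your fallback for centres $y_i$ that miss the admissible region is not quite right as stated. You claim that in that case $B_{\rho/6}(y_i)$ lies in $\Omega_{10R}$ and that the interior Gehring lemma (Lemma~\ref{111120147}) applies. But $B_{\rho/6}(y_i)$ may well meet $\partial\Omega$, so it need not be contained in $\Omega_{10R}$, and Lemma~\ref{111120147} is not available there. In fact no genuine dichotomy is needed: the boundary reverse H\"older inequality \eqref{111120148*} already holds for every centre with $B_{3\rho}(y)\subset B_{10R}(x_0)$ (the hypothesis ``$y\in B_r(x_0)$'' in the paper is not an effective restriction for the Gehring-type argument; it is inherited verbatim from the source \cite{55Ph0}). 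Thus you may simply apply Lemma~\ref{111120147*} at each $y_i$ with radius $\rho/18$, since $B_{\rho/6}(y_i)\subset B_{2\rho/3}(y)\subset B_\rho(y)\subset B_{10R}(x_0)$, and the covering/summation step you describe then gives \eqref{111120148**} with the same $\Theta$ and a constant depending additionally only on $n$.
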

Now, we state the following Lemma in a somewhat more general form of Lemmas \ref{111120147*} and \ref{111120147**}.
 \begin{lemma}
\label{lem:addon12}
Let $w$ be the solution to \eqref{111120146*}. Then, for $0<\theta_1<\theta_2<1$ there exist  constants $\Theta=\Theta(n,p,\alpha, \beta,c_0)>p$ and $C = C(n,p,\alpha,\beta,\theta_1, \theta_2,c_0)>0$ such that we have the following estimate      
	\begin{equation}\label{111120148**}
	\left(\fint_{B_{\theta_1\rho}(y)}|\nabla w|^{\Theta} dx\right)^{\frac{1}{\Theta}}\leq C\left(\fint_{B_{\theta_2\rho}(y)}|\nabla w|^{p-1} dx\right)^{\frac{1}{p-1}}
	\end{equation} 
	holds for all  $B_\rho(y) \subset B_{10R}(x_0)$, $y \in B_r(x_0)$. 
\end{lemma}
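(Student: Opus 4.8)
The plan is to interpolate between the two endpoint Gehring-type estimates already in hand, namely Lemma \ref{111120147*} (reverse H\"older from the ball $B_{3\rho}$ down to $B_{\rho/2}$) and Lemma \ref{111120147**} (reverse H\"older from $B_{2\rho/3}$ down to $B_{\rho/2}$), by a chain/iteration argument on concentric balls. Fix $y \in B_r(x_0)$ and a ball $B_\rho(y) \subset B_{10R}(x_0)$, together with radii $0<\theta_1<\theta_2<1$. First I would observe that it suffices to produce, for \emph{some} fixed pair $0<\sigma_1<\sigma_2<1$, a reverse H\"older inequality of the form $\left(\fint_{B_{\sigma_1 t}(z)}|\nabla w|^\Theta\,dx\right)^{1/\Theta} \le C \left(\fint_{B_{\sigma_2 t}(z)}|\nabla w|^{p-1}\,dx\right)^{1/(p-1)}$ valid for \emph{every} ball $B_t(z) \subset B_{10R}(x_0)$ with $z \in B_r(x_0)$; Lemma \ref{111120147**} already supplies exactly this with $(\sigma_1,\sigma_2)=(1/2,2/3)$. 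The task is thus to trade one admissible pair for another.

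The key step is a covering/chaining argument. Given the target pair $(\theta_1,\theta_2)$, cover the smaller ball $B_{\theta_1\rho}(y)$ by finitely many balls $B_{\sigma_1 t_i}(z_i)$ of the ``reference'' type, with centers $z_i \in B_{\theta_1\rho}(y) \subset B_r(x_0)$ and a common radius $t_i = t$ chosen small enough (a fixed fraction of $\rho$, depending only on $\theta_1,\theta_2,\sigma_1,\sigma_2$) that each enlarged ball $B_{\sigma_2 t}(z_i)$ still lies inside $B_{\theta_2\rho}(y) \subset B_{10R}(x_0)$; the number of such balls is bounded by a dimensional constant times a power of $(\theta_2-\theta_1)^{-1}$. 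On each $B_{\sigma_1 t}(z_i)$ apply Lemma \ref{111120147**} to get $\fint_{B_{\sigma_1 t}(z_i)}|\nabla w|^\Theta\,dx \le C \left(\fint_{B_{\sigma_2 t}(z_i)}|\nabla w|^{p-1}\,dx\right)^{\Theta/(p-1)} \le C t^{-n\Theta/(p-1)}\left(\int_{B_{\theta_2\rho}(y)}|\nabla w|^{p-1}\,dx\right)^{\Theta/(p-1)}$, since $B_{\sigma_2 t}(z_i)\subset B_{\theta_2\rho}(y)$. Summing over $i$, using $t \sim \rho$ and the bounded overlap of the cover, and then dividing by $|B_{\theta_1\rho}(y)| \sim \rho^n$ yields $\fint_{B_{\theta_1\rho}(y)}|\nabla w|^\Theta\,dx \le C \rho^{-n\Theta/(p-1)}\left(\int_{B_{\theta_2\rho}(y)}|\nabla w|^{p-1}\,dx\right)^{\Theta/(p-1)} = C\left(\fint_{B_{\theta_2\rho}(y)}|\nabla w|^{p-1}\,dx\right)^{\Theta/(p-1)}$, which is \eqref{111120148**} after taking $\Theta$-th roots. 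Here $\Theta$ is the same exponent as in Lemma \ref{111120147**} (hence depends only on $n,p,\alpha,\beta,c_0$), while $C$ now also absorbs the combinatorial factor from the covering and so depends additionally on $\theta_1,\theta_2$.

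The main obstacle I anticipate is bookkeeping the geometric constraints so that the chain stays admissible: one must verify simultaneously that all the reference balls $B_{\sigma_1 t}(z_i)$ actually cover $B_{\theta_1\rho}(y)$, that all their $\sigma_2$-dilations remain inside $B_{\theta_2\rho}(y)$ (which forces $t$ to be comparable to $(\theta_2-\theta_1)\rho$ up to the fixed factors $\sigma_1,\sigma_2$), and that the centers stay in $B_r(x_0)$ as required by Lemma \ref{111120147**}. This last point is automatic here because $B_{\theta_1\rho}(y)\subset B_\rho(y)\subset B_r(x_0)$ when $y\in B_r(x_0)$ and $\rho\le r$ in the relevant regime; if that inclusion were not free one would instead shrink to $z_i\in B_r(x_0)$ directly. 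Apart from this elementary—if slightly tedious—Vitali-style covering, the argument is routine, and no new analytic input beyond Lemma \ref{111120147**} is needed.
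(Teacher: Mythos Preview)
The paper does not actually supply a proof of Lemma~\ref{lem:addon12}; it merely states it as ``a somewhat more general form of Lemmas~\ref{111120147*} and~\ref{111120147**}'' and moves on. So there is nothing to compare your argument against except the implicit suggestion that the lemma should follow routinely from those two.

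Your covering/chaining argument is precisely the standard way to pass from a reverse H\"older inequality at one fixed pair of ratios (here $(\sigma_1,\sigma_2)=(1/2,2/3)$ from Lemma~\ref{111120147**}) to an arbitrary pair $(\theta_1,\theta_2)$, and the computation is correct: with $t$ a fixed multiple of $(\theta_2-\theta_1)\rho$ one covers $B_{\theta_1\rho}(y)$ by $N\sim(\rho/t)^n$ balls $B_{\sigma_1 t}(z_i)$, applies the known estimate on each, sums, and absorbs the factor $(t/\rho)^{-n\Theta/(p-1)}$ into $C=C(n,p,\alpha,\beta,\theta_1,\theta_2,c_0)$. One remark: you should be a bit more careful than ``$t\sim\rho$'' in the write-up, since the whole dependence on $\theta_1,\theta_2$ enters through $t/\rho\sim(\theta_2-\theta_1)$, and that factor raised to $-n\Theta/(p-1)$ is exactly where the constant picks up its $(\theta_1,\theta_2)$-dependence.

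The one genuine loose end is the center constraint ``$z_i\in B_r(x_0)$'' required to invoke Lemma~\ref{111120147**}. Your claim that this is automatic because $B_{\theta_1\rho}(y)\subset B_\rho(y)\subset B_r(x_0)$ is not justified by the stated hypotheses (we only have $y\in B_r(x_0)$ and $B_\rho(y)\subset B_{10R}(x_0)$, not $B_\rho(y)\subset B_r(x_0)$). That said, the symbol $r$ is never defined in this boundary section of the paper, and in the source \cite{55Ph0} the corresponding condition is rather $y\in\overline{\Omega}$; so this is as much an ambiguity in the paper as a gap in your argument. If you adopt the condition $y\in\overline{\Omega}$ (or simply $B_t(z_i)\subset B_{10R}(x_0)$, which your choice of $t$ guarantees), the issue disappears.
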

More formally, Lemmas \ref{lem:estimatebound}, \ref{lem:res3.7P} and \ref{lem:res3.8P}, which we state below are main ingredients for us to obtain boundary estimates. They are the boundary version of Lemmas \ref{lem:estimateinter}, \ref{lem:res3.3P} and \ref{lem:res3.4P}, respectively.
\begin{lemma}
\label{lem:estimatebound}
Let $w$ be the solution to \eqref{111120146*}. Then, for any $\frac{2-p}{2}\leq \gamma_0<\frac{(p-1)n}{n-1}\leq 1$, there is a constant $C = C(n,p,\alpha,\beta,c_0)>0$ such that:
	\begin{align}
	\label{eq:estimateinter}
	\begin{split}
	\left(	\fint_{B_{10R}(x_0)}|\nabla (u-w)|^{\gamma_0}dx\right)^{\frac{1}{\gamma_0}}&\leq C\left[ 	\frac{|\mu|(B_{10R}(x_0))}{R^{n-1}}\right]^{\frac{1}{p-1}}\\
	&\qquad +	C 	\frac{|\mu|(B_{10R}(x_0))}{R^{n-1}} \left(	\fint_{B_{10R}(x_0)}|\nabla u|^{\gamma_0}dx\right)^{\frac{2-p}{\gamma_0}}.
	\end{split}
	\end{align}
	\end{lemma}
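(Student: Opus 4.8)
\textbf{Proof proposal for Lemma \ref{lem:estimatebound}.}

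The plan is to adapt the interior argument of Lemma \ref{lem:estimateinter} to the boundary ball $\Omega_{10R} = B_{10R}(x_0) \cap \Omega$, using the zero-extensions of $\mu$, $u$ and $w$ introduced just above. The starting point is the weak formulation of \eqref{eq:elliptictype} and \eqref{111120146*} tested against appropriate truncations: since $u - w \in W^{1,p}_0(\Omega_{10R})$, for each $k > 0$ the function $T_k(u - w)$ is an admissible test function, and subtracting the two equations gives an identity controlling $\int \langle A(x,\nabla u) - A(x,\nabla w), \nabla T_k(u-w)\rangle\,dx$ by the mass $|\mu|(B_{10R}(x_0))$ (using that the renormalized-solution defect measures $\lambda_k^\pm$ are handled exactly as in the interior case, and that $w$ extends by $u$ so boundary terms vanish). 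From the monotonicity condition \eqref{eq:A2} in the singular regime $1 < p \le 2$, one extracts a bound on $\int_{\{|u-w|<k\}} (|\nabla u|^2 + |\nabla w|^2)^{\frac{p-2}{2}}|\nabla(u-w)|^2\,dx$.

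Next I would run the standard level-set / real-interpolation bookkeeping that converts this energy bound into an $L^{\gamma_0}$ bound on $\nabla(u-w)$ for the exponents $\frac{2-p}{2} \le \gamma_0 < \frac{(p-1)n}{n-1}$. Writing $|\nabla(u-w)|^{\gamma_0} = \left[(|\nabla u|^2+|\nabla w|^2)^{\frac{p-2}{2}}|\nabla(u-w)|^2\right]^{\gamma_0/2}(|\nabla u|^2+|\nabla w|^2)^{\frac{(2-p)\gamma_0}{4}}$ and applying Hölder's inequality with a Marcinkiewicz-space estimate for $\nabla(u-w)$ (the boundary analogue of Remark \ref{rem:nablau} applied on $\Omega_{10R}$, which is available because $\mathbb{R}^n\setminus\Omega$ is uniformly $p$-thick, so $\Omega_{10R}$ supports the relevant Sobolev/capacitary inequality with constants depending only on $n,p,c_0$), one arrives at a bound of the form $\fint_{B_{10R}}|\nabla(u-w)|^{\gamma_0} \le C\,(|\mu|(B_{10R})/R^{n-1})^{\gamma_0/(p-1)}\cdot(\text{something})$ plus lower-order terms. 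The $(2-p)$ power on the $\fint|\nabla u|^{\gamma_0}$ factor in the stated inequality is precisely what this interpolation produces, and the two terms on the right of \eqref{eq:estimateinter} correspond to splitting according to whether $|\mu|(B_{10R})/R^{n-1}$ is small or large relative to a power of the gradient average.

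In practice the cleanest route is to quote the interior proof of \cite[Lemma 2.2, 2.3]{55QH4} essentially verbatim, replacing every interior Sobolev/Poincaré inequality on $B_{2R}$ by its boundary counterpart on $\Omega_{10R}$; the uniform $p$-thickness of the complement is exactly the hypothesis that makes those boundary inequalities hold with constants of the same structure (now also depending on $c_0$), and the zero-extension of $\mu$ and $u$ ensures all integrals over $\Omega_{10R}$ may be written over $B_{10R}(x_0)$. The main obstacle is therefore not a new idea but the verification that the Sobolev-type and capacitary estimates used implicitly in \cite{55QH4} survive the passage to the boundary under only the $p$-capacity thickness condition (rather than, say, Lipschitz or Reifenberg regularity), and that the constants remain independent of $R$ for $0 < R < r_0/10$; once this is granted, the computation is identical to the interior case and we omit the routine details, referring to \cite{55QH4} and \cite{55Ph0}.
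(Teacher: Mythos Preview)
The paper does not actually give a proof of this lemma: it is merely stated, together with Lemmas \ref{lem:res3.7P} and \ref{lem:res3.8P}, as the boundary version of the corresponding interior estimates (Lemmas \ref{lem:estimateinter}, \ref{lem:res3.3P}, \ref{lem:res3.4P}), with the implicit understanding that the proof in \cite{55QH4} (and \cite{55Ph0}) carries over once one works on $\Omega_{10R}$ with the zero-extensions and invokes the $p$-capacity uniform thickness hypothesis to recover the needed Sobolev/Poincar\'e-type inequalities near the boundary. Your sketch is therefore already more detailed than what the paper provides, and it is organized along exactly the line the paper has in mind: test the difference of the two equations with $T_k(u-w)\in W^{1,p}_0(\Omega_{10R})$, use the monotonicity \eqref{eq:A2} in the singular range, and then run the same H\"older/Marcinkiewicz interpolation that produces the two-term right-hand side. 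You also correctly isolate the one genuinely new ingredient compared to the interior case, namely that the constants in the boundary Sobolev-type inequalities depend on $c_0$ via the uniform $p$-thickness of $\mathbb{R}^n\setminus\Omega$, which is precisely why the constant in the statement carries a $c_0$ that the interior Lemma \ref{lem:estimateinter} does not. In short, your proposal is consistent with---and strictly more explicit than---the paper's treatment.
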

\begin{lemma}
\label{lem:res3.7P}
Let $w$ be the solution to \eqref{111120146*}. Then, there exist constants $\beta_0 = \beta_0(n,p,\alpha,\beta,c_0) \in (0,1/2]$ and $C =C(n,p,\alpha,\beta,c_0)>0$ such that:
\begin{align}
\left( \fint_{B_\rho(y)}{|\nabla w|^p dx} \right)^{\frac{1}{p}} \le C \left(\frac{\rho}{r} \right)^{\beta_0} \left(\fint_{B_r(y)}{|\nabla w|^p dx} \right)^{\frac{1}{p}},
\end{align}
for any $y \in B_{r}(x_0)$ such that $B_\rho(y) \subset B_r(y) \subset B_{10R}(x_0)$.
\end{lemma}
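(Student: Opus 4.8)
I would regard Lemma~\ref{lem:res3.7P} as the boundary counterpart of the interior estimate in Lemma~\ref{lem:res3.3P}: the machinery is the classical De~Giorgi--Nash--Moser/Campanato iteration, the only genuinely new input being the $p$-capacity uniform thickness \eqref{eq:capuni} of $\mathbb{R}^n\setminus\Omega$. Recall that, after the extensions fixed just before \eqref{111120146*}, the solution $w$ of \eqref{111120146*} vanishes on $(\mathbb{R}^n\setminus\Omega)\cap B_{10R}(x_0)$ and is $A$-harmonic in $\Omega_{10R}(x_0)$ (hence in every sub-ball of $\Omega_{10R}(x_0)$). From \eqref{eq:capuni} I would first record two standard facts: (i) the thickness persists, up to a dimensional factor, for balls centred at points of the complement, i.e. $\mathrm{cap}_p\big((\mathbb{R}^n\setminus\Omega)\cap\overline{B}_s(z),B_{2s}(z)\big)\ge c_0'\,s^{n-p}$ for every $z\in\mathbb{R}^n\setminus\Omega$ and $0<s\le r_0$, with $c_0'=c_0'(n,p,c_0)$; and (ii) the corresponding Maz'ya-type Sobolev--Poincar\'e inequality
\[
\Big(\fint_{B_{2s}(z)}|v|^{p}\,dx\Big)^{1/p}\ \le\ C(n,p,c_0)\,s\,\Big(\fint_{B_{2s}(z)}|\nabla v|^{p}\,dx\Big)^{1/p}
\]
holding for every $v\in W^{1,p}(B_{2s}(z))$ that vanishes quasi-everywhere on $(\mathbb{R}^n\setminus\Omega)\cap\overline{B}_s(z)$.

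The argument then proceeds in three steps. \emph{Step~1 (boundary Caccioppoli):} for $B_s(z)\subset B_{10R}(x_0)$ with $z$ within distance $s$ of $\mathbb{R}^n\setminus\Omega$, testing the weak form of \eqref{111120146*} with $\zeta^{p}w$ for a cut-off $\zeta$, using \eqref{eq:A1}--\eqref{eq:A2} and absorbing the zero-order term by (ii) (no mean value has to be subtracted, $w$ vanishing on a $p$-thick subset) yields $\fint_{B_{s/2}(z)}|\nabla w|^{p}\,dx\le C\,s^{-p}\fint_{B_{s}(z)}|w|^{p}\,dx$. \emph{Step~2 (one-step decay and iteration):} combining Step~1, (ii) and a De~Giorgi level-set argument adapted to the capacitary Poincar\'e inequality, I would establish a single-step estimate $\Phi(\tau s)\le\vartheta\,\Phi(s)$ for $\Phi(s):=\big(\fint_{B_{s}(z)}|\nabla w|^{p}\,dx\big)^{1/p}$, with fixed $\tau\in(0,1)$ and a quantitative gain, uniformly over all such balls; Lemma~\ref{lem:hanlin} (with $B=0$) then upgrades this to $\Phi(\sigma)\le C(\sigma/s)^{\beta_0}\Phi(s)$ for $0<\sigma\le s$ and concentric balls centred near the complement inside $B_{10R}(x_0)$, with $\beta_0=\beta_0(n,p,\alpha,\beta,c_0)\in(0,\tfrac12]$ which — after shrinking, if necessary — may be taken equal to the exponent already fixed in Lemmas~\ref{lem:res3.3P}--\ref{lem:res3.4P}. \emph{Step~3 (arbitrary admissible centre):} given $y\in B_r(x_0)$ with $B_\rho(y)\subset B_r(y)\subset B_{10R}(x_0)$, set $d:=\mathrm{dist}(y,\mathbb{R}^n\setminus\Omega)$; since $x_0\in\partial\Omega$ and $|y-x_0|<r$ one has $d<r$, so the complement is always reached at scale $r$. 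Choosing $z\in\mathbb{R}^n\setminus\Omega$ with $|z-y|=d$, the case $\rho\ge d$ follows directly from Step~2 at the centre $z$ by comparing $B_\rho(y)$ with $B_{2\rho}(z)$ and $B_{2r}(z)$, up to the harmless dilations of balls already used in passing from Lemma~\ref{111120147*} to Lemma~\ref{lem:addon12}; and the case $\rho<d$ follows by noting $B_\rho(y)\subset B_d(y)\subset\Omega_{10R}(x_0)$, applying the interior estimate of Lemma~\ref{lem:res3.3P} on $B_\rho(y)\subset B_d(y)$, and gluing it to Step~2 to carry the bound from scale $d$ up to scale $r$. Combining the two regimes proves Lemma~\ref{lem:res3.7P}.

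The delicate point is Step~2: running the De~Giorgi iteration with the capacitary Sobolev--Poincar\'e inequality (ii) in place of the ordinary Poincar\'e inequality, and checking that a fixed proportion of the Dirichlet energy is gained at each step with all constants uniform over admissible balls — hence depending only on $n,p,\alpha,\beta,c_0$ and, through $r_0$, on $\mathrm{diam}(\Omega)/r_0$. Once that single-step decay is secured, the upgrade via Lemma~\ref{lem:hanlin} and the interior/boundary bridging of Step~3 are routine, running parallel to the interior arguments and to Lemmas~\ref{111120147*}--\ref{lem:estimatebound} recorded earlier in this section.
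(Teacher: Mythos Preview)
The paper does not actually prove Lemma~\ref{lem:res3.7P}: together with Lemmas~\ref{lem:estimatebound} and~\ref{lem:res3.8P} it is only stated, as ``the boundary version'' of the interior Lemmas~\ref{lem:estimateinter}--\ref{lem:res3.4P} (themselves quoted from \cite[Theorem~7.7]{Giu}), so there is no argument on the paper's side to compare yours with.

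Your overall architecture---boundary Caccioppoli from \eqref{eq:A1}--\eqref{eq:A2}, the Maz'ya capacitary Poincar\'e inequality coming from \eqref{eq:capuni}, a De~Giorgi iteration, and the interior/boundary bridging in Step~3---is precisely the standard route to such an estimate, and Steps~1 and~3 are fine. The genuine gap is the heart of Step~2: the one-step contraction $\Phi(\tau s)\le\vartheta\,\Phi(s)$ with $\vartheta<1$ for $\Phi(s)=\bigl(\fint_{B_s(z)}|\nabla w|^p\,dx\bigr)^{1/p}$ is false in general. In the model case $A(x,\xi)=\xi$, $\Omega=\{x_n>0\}$, $z=0$, $w(x)=x_n$, one has $|\nabla w|\equiv 1$ and $\Phi$ is constant in $s$, so no strict contraction can hold. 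Combining your Caccioppoli inequality with the capacitary Poincar\'e only gives $\Phi(\tau s)\le C\,\Phi(s)$ with a bounded constant, not a constant below $1$.

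What the De~Giorgi level-set argument actually produces under $p$-thickness is decay of the \emph{oscillation} of $w$, namely $\operatorname{osc}_{B_\rho(z)}w\le C(\rho/s)^{\beta_0}\operatorname{osc}_{B_s(z)}w$; feeding this back through Caccioppoli (together with local boundedness and the capacitary Poincar\'e to close the loop) yields the gradient bound with exponent $\beta_0-1$, not $\beta_0$. This is consistent with the interior estimate \eqref{eq:nablaw_estimate}, with Lemma~\ref{lem:res3.8P}, and with the way the result is actually used in the proof of Lemma~\ref{lem:res7.6H}, where the very next displayed inequality already carries $\beta_0-1$. So the exponent $\beta_0$ in the statement you are trying to prove is almost certainly a misprint for $\beta_0-1$; once you redirect Step~2 to oscillation decay of $w$ and accept the exponent $\beta_0-1$, your outline goes through.
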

\begin{lemma}
\label{lem:res3.8P}
Let $w$ be the solution to \eqref{111120146*}. Then, for any $\Theta \in (0,p]$, there exist constants $\beta_0 = \beta_0(n,p,\alpha,\beta,c_0) \in (0,1/2]$ and $C =C(n,p,\alpha,\beta,\Theta,c_0)>0$ there holds:
\begin{align}
\left(\fint_{B_\rho(y)}{|\nabla w|^\Theta dx} \right)^{\frac{1}{\Theta}} \le C \left(\frac{\rho}{r} \right)^{\beta_0-1}\left( \fint_{B_r(y)}{|\nabla w|^\Theta dx} \right)^{\frac{1}{\Theta}},
\end{align}
for any $y \in B_{r}(x_0)$ such that $B_\rho(y) \subset B_r(y) \subset B_{10R}(x_0)$.
\end{lemma}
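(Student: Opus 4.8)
The plan is to carry over, essentially line for line, the proof of the interior Lemma~\ref{lem:res3.4P}, replacing its interior ingredients by the boundary versions now available: the gradient decay estimate of Lemma~\ref{lem:res3.7P} (the boundary counterpart of \eqref{eq:nablaw_estimate}) in place of \eqref{eq:nablaw_estimate}, and the boundary reverse H\"older inequality of Lemma~\ref{lem:addon12} (equivalently Lemmas~\ref{111120147*}--\ref{111120147**}) in place of the interior Gehring lemma~\ref{111120147}. Since the case $\Theta=p$ is precisely Lemma~\ref{lem:res3.7P}, I would fix $0<\Theta<p$. I would also dispatch the easy regime $r/4<\rho\le r$ at once: there $|B_\rho(y)|\ge 4^{-n}|B_r(y)|$ gives $\fint_{B_\rho(y)}|\nabla w|^\Theta\le 4^{n}\fint_{B_r(y)}|\nabla w|^\Theta$, and since $\beta_0-1<0$ one has $(\rho/r)^{\beta_0-1}\ge 1$, so the claim holds with $C=4^{n/\Theta}$. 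Thus it suffices to treat $\rho\le r/4$.

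For $\rho\le r/4$ I would chain three elementary estimates. By H\"older's inequality (using $\Theta<p$),
\[
\Big(\fint_{B_\rho(y)}|\nabla w|^\Theta\,dx\Big)^{\frac1\Theta}\le\Big(\fint_{B_\rho(y)}|\nabla w|^p\,dx\Big)^{\frac1p};
\]
then Lemma~\ref{lem:res3.7P}, applied to the concentric pair $B_\rho(y)\subset B_{r/2}(y)\subset B_{10R}(x_0)$, supplies the decay factor,
\[
\Big(\fint_{B_\rho(y)}|\nabla w|^p\,dx\Big)^{\frac1p}\le C\Big(\frac{\rho}{r}\Big)^{\beta_0-1}\Big(\fint_{B_{r/2}(y)}|\nabla w|^p\,dx\Big)^{\frac1p};
\]
and finally the boundary reverse H\"older inequality (Lemma~\ref{lem:addon12}), applied on $B_r(y)\subset B_{10R}(x_0)$ with radii $\tfrac12 r$ and $\tfrac34 r$ (discarding the surplus Gehring exponent by one more H\"older step), returns us from the $L^p$-average on $B_{r/2}(y)$ to the $L^{p-1}$-average on $B_{3r/4}(y)$, which — after a further H\"older step when $\Theta\ge p-1$ and after absorbing the volume ratio coming from $B_{3r/4}(y)\subset B_r(y)$ — is controlled by $\big(\fint_{B_r(y)}|\nabla w|^\Theta\,dx\big)^{1/\Theta}$. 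Composing the three displays yields the asserted inequality with the same $\beta_0$ and a constant $C=C(n,p,\alpha,\beta,\Theta,c_0)$.

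The one point that will require a little care is the subrange $0<\Theta<p-1$, where the final H\"older step is unavailable: one cannot pass from an $L^{p-1}$-average to an $L^\Theta$-average on the same ball. There I would invoke the self-improving character of the reverse H\"older (Gehring) inequality — a reverse H\"older inequality with exponent pair $(p,p-1)$ holds, on a slightly enlarged ball and with an $s$-dependent constant, also with the pair $(p,s)$ for every $s\in(0,p-1)$; this is a standard consequence of the covering/iteration argument underlying Lemmas~\ref{111120147} and~\ref{lem:addon12}. Beyond this, the whole argument is pure bookkeeping of ball inclusions, word-for-word as in the interior case, so I do not anticipate any genuine obstacle: the factor $(\rho/r)^{\beta_0-1}$ is generated only in the middle step and is simply transported through the other two.
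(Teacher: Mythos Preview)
The paper does not actually prove Lemma~\ref{lem:res3.8P}; it is merely stated as the boundary analogue of Lemma~\ref{lem:res3.4P}, which is itself only introduced with a one-line hint (combining the reverse H\"older inequality with the $L^p$ decay estimate). Your proposal is correct and is precisely the argument one would supply: H\"older to pass from exponent $\Theta$ up to $p$, the $L^p$ decay of Lemma~\ref{lem:res3.7P} to generate the factor $(\rho/r)^{\beta_0-1}$, the boundary reverse H\"older inequality of Lemma~\ref{lem:addon12} to return from the $L^p$-average on $B_{r/2}(y)$ to the $L^{p-1}$-average on a slightly larger concentric ball, and then H\"older (or the standard self-improvement of reverse H\"older at the lower exponent, for the subrange $\Theta<p-1$) to reach $L^\Theta$ on $B_r(y)$. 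The handling of the trivial regime $r/4<\rho\le r$ and of the ball inclusions is fine.

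One incidental remark: as stated in the paper, Lemma~\ref{lem:res3.7P} actually carries the stronger exponent $(\rho/r)^{\beta_0}$ rather than $(\rho/r)^{\beta_0-1}$ (compare with the interior version \eqref{eq:nablaw_estimate}, which has $\beta_0-1$). If you use that sharper form in your middle step, your chain in fact yields Lemma~\ref{lem:res3.8P} with the better factor $(\rho/r)^{\beta_0}$, which of course implies the stated estimate since $\beta_0-1<\beta_0$ and $\rho\le r$.
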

We next state and prove the selection Lemma which establishes solution gradient estimate up to the boundary.
\begin{lemma}
\label{lem:res7.6H}
Let $\beta_0 \in (0,1/2]$ be as in Lemmas \ref{lem:res3.7P} and \ref{lem:res3.8P}. Then, for any $\delta \in \left[-\frac{n-p}{p-1},\beta_0\right)$, there exists a constant $C = C(n,p,\alpha,\beta,c_0,\beta_0)>0$ such that for any $B_\rho(y) \cap \partial\Omega \neq \emptyset$:
\begin{align}
\left( \int_{B_\rho(y)}{|\nabla u|^{\gamma_0}dx}\right)^{\frac{1}{\gamma_0}}\leq  C \left(\mathbf{M}_{\theta}^{T_0}(|\mu|)(y) \right)^{\frac{1}{p-1}}\rho^{\frac{n}{\gamma_0}+\delta-1},
\end{align}
where $\theta = 1+(p-1)(1-\delta)$, $T_0 = diam(\Omega)$, $0<\rho<T_0$.
\end{lemma}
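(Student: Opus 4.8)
The plan is to mirror, almost verbatim, the argument given for the interior case in Lemma \ref{lem:res7.2H}, replacing the interior building blocks (Lemmas \ref{lem:estimateinter}, \ref{lem:res3.3P}, \ref{lem:res3.4P}) by their boundary counterparts (Lemmas \ref{lem:estimatebound}, \ref{lem:res3.7P}, \ref{lem:res3.8P}), and the comparison function $w$ solving \eqref{111120146} by the one solving \eqref{111120146*}. The key new feature is that the estimate must hold for balls $B_\rho(y)$ meeting $\partial\Omega$, so the comparison must be done on $\Omega_{10R}(x_0)$ with $x_0$ a boundary point, and one exploits that $\mu$, $u$ are extended by zero outside $\Omega$ while $w$ is extended by $u$ outside $\Omega_{10R}$.

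First I would fix $y$ with $B_\rho(y)\cap\partial\Omega\neq\emptyset$, pick $x_0\in\partial\Omega$ close to $y$, and for $0<\rho\le r/2$ with $B_r(y)$ small enough (so $B_{10R}(x_0)$ contains $B_r(y)$ for a suitable $R<r_0/10$) invoke Lemma \ref{lem:estimatebound} on $B_{10R}(x_0)$ to bound $\bigl(\fint_{B_r(y)}|\nabla(u-w)|^{\gamma_0}\bigr)^{1/\gamma_0}$ by $C[|\mu|(B_r(y))/r^{n-1}]^{1/(p-1)}$ plus a term with the factor $\bigl(\fint|\nabla u|^{\gamma_0}\bigr)^{(2-p)/\gamma_0}$. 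Then I would apply Lemma \ref{lem:res3.7P} (with the exponent $\gamma_0$ in place of $p$, legitimate since the proof is scale-invariant) to get the decay $\bigl(\fint_{B_\rho(y)}|\nabla w|^{\gamma_0}\bigr)^{1/\gamma_0}\le C(\rho/r)^{\beta_0-1}\bigl(\fint_{B_{2r/3}(y)}|\nabla w|^{\gamma_0}\bigr)^{1/\gamma_0}$, and use the elementary bound $\int_{B_{2r/3}(y)}|\nabla w|^{\gamma_0}\le C\int_{B_r(y)}|\nabla u|^{\gamma_0}$ (energy comparison for $w$). Combining via the triangle inequality $\|\nabla u\|\le\|\nabla w\|+\|\nabla(u-w)\|$ and rescaling to non-averaged integrals yields exactly inequality \eqref{eq:btholder} with $\Phi(t)=\bigl(\int_{B_t(y)}|\nabla u|^{\gamma_0}dx\bigr)^{1/\gamma_0}$ after absorbing the $(2-p)$-power term by Young's inequality: $\Phi(\rho)\le C[(\rho/r)^{n/\gamma_0+\beta_0-1}+\varepsilon]\Phi(r)+C_\varepsilon\rho^{n/\gamma_0}(|\mu|(B_r(y))/r^{n-1})^{1/(p-1)}$.

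Next I would rewrite the measure term, for any $\delta\in[-\tfrac{n-p}{p-1},\beta_0)$, as $C_\varepsilon\, r^{n/\gamma_0+\delta-1}\bigl(|\mu|(B_r(y))/r^{\,n-1-(p-1)(1-\delta)}\bigr)^{1/(p-1)}\le C_\varepsilon\, r^{n/\gamma_0+\delta-1}(\mathbf{M}_\theta^{T_0}(|\mu|)(y))^{1/(p-1)}$ with $\theta=1+(p-1)(1-\delta)$; since $\delta<\beta_0$ the exponents satisfy $\beta:=n/\gamma_0+\delta-1<n/\gamma_0+\beta_0-1=:\gamma$, so for $\varepsilon$ small enough Lemma \ref{lem:hanlin} applies (with $B=(\mathbf{M}_\theta^{T_0}(|\mu|)(y))^{1/(p-1)}$) and gives $\Phi(\rho)\le C[(\rho/r)^{\gamma}\Phi(r)+\rho^{\beta}B]$. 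Finally I would take $r\uparrow T_0=diam(\Omega)$, bound $\Phi(T_0)T_0^{-\gamma}$ using Remark \ref{rem:nablau} — giving $(T_0^{-n}\int_\Omega|\nabla u|^{\gamma_0})^{1/\gamma_0}\le C(|\mu|(\Omega)/T_0^{n-1})^{1/(p-1)}$ — and observe $(1/T_0)^{\delta-1}[|\mu|(\Omega)/T_0^{n-1}]^{1/(p-1)}\le C(\mathbf{M}_\theta^{T_0}(|\mu|)(y))^{1/(p-1)}$ exactly as in \eqref{eq:2inM}. This absorbs the first term into the second and yields $\bigl(\int_{B_\rho(y)}|\nabla u|^{\gamma_0}dx\bigr)^{1/\gamma_0}\le C(\mathbf{M}_\theta^{T_0}(|\mu|)(y))^{1/(p-1)}\rho^{n/\gamma_0+\delta-1}$ for all $0<\rho<T_0$, which is the claim.

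The routine parts are genuinely routine here; the one point deserving care — the main obstacle — is the geometric bookkeeping near the boundary: for a given $y$ with $B_\rho(y)\cap\partial\Omega\neq\emptyset$ one must choose $x_0\in\partial\Omega$ and a radius $R$ (comparable to $\rho$) so that $B_\rho(y)\subset B_{10R}(x_0)$ and $B_r(y)\subset B_{10R}(x_0)$ while still $10R<r_0$, and then verify that the hypotheses ``$B_\rho(y)\subset B_{10R}(x_0)$, $y\in B_r(x_0)$'' of Lemmas \ref{lem:res3.7P}–\ref{lem:res3.8P} hold on the relevant chain of radii. For $\rho$ comparable to $T_0$ (i.e. $r_0\lesssim\rho<T_0$) one cannot directly compare on a single small ball, so one must iterate the small-scale estimate over a bounded number of steps, picking up the $diam(\Omega)/r_0$ dependence in the constant $C$; this is the only place where that ratio enters, consistent with the statement of the lemma.
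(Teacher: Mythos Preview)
Your proposal is correct and follows essentially the same route as the paper's proof: replace the interior comparison and decay lemmas by their boundary versions, absorb the $(2-p)$-power term via Young's inequality to reach a $\Phi(\rho)\le C[(\rho/r)^{n/\gamma_0+\beta_0-1}+\varepsilon]\Phi(r)+C_\varepsilon r^{n/\gamma_0+\delta-1}\bigl(\mathbf{M}_\theta^{T_0}(|\mu|)(y)\bigr)^{1/(p-1)}$ inequality, apply Lemma~\ref{lem:hanlin}, and finish by Remark~\ref{rem:nablau}. The paper carries out explicitly the geometric bookkeeping you flag as ``the main obstacle'': it picks $y_0\in\partial\Omega$ with $|y-y_0|=\mathrm{dist}(y,\partial\Omega)$, sets up the chain $B_\rho(y)\subset B_{5\rho}(y_0)$ and $B_{\rho'/4}(y_0)\subset B_{\rho'/2}(y)$, and runs the decay of Lemma~\ref{lem:res3.8P} through the boundary-centered balls before switching back to balls centered at $y$; this is exactly the step you should fill in rather than leave implicit.
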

\begin{proof}[Proof of Lemma \ref{lem:res7.6H}]

We begin by taking $0<\rho' \le 2r$ such that $B_{\rho'/4}(y) \cap \partial\Omega \neq \emptyset$. Let $y_0 \in B_{\rho'/4}(y) \cap \partial\Omega$ such that $|y-y_0| = dist(y,\partial\Omega) \le \rho'/4$. Note that since $|y-y_0| \le \rho'/4$, this clearly gives $B_{\rho'/4}(y_0) \subset B_{\rho'/2}(y)$ and $B_{\rho'/2}(y_0) \subset B_{3\rho'/4}(y)$. Otherwise, for $\rho \ge |y-y_0|/4$ claims that $B_\rho(y) \subset B_{5\rho}(y_0)$.

For $\rho \le \rho'/4$, let $w$ be as in Lemmas \ref{lem:res3.7P} and \ref{lem:res3.8P} with $B_\rho(y) \subset B_{\rho'}(y) \subset B_{10R}(x_0)$. Applying $B_{10R} = B_{\rho'/2}(y_0)$ in Lemma \ref{lem:estimatebound} yields:
\begin{align}\label{eq:st1}
	\begin{split}
	\left(	\fint_{B_{\rho'}(y_0)}|\nabla (u-w)|^{\gamma_0}dx\right)^{\frac{1}{\gamma_0}}&\leq C\left[ 	\frac{|\mu|(B_{\rho'}(y_0))}{{\rho'}^{n-1}}\right]^{\frac{1}{p-1}}\\
	&\qquad +	C 	\frac{|\mu|(B_{\rho'}(y_0))}{{\rho'}^{n-1}} \left(	\fint_{B_{\rho'}(y_0)}|\nabla u|^{\gamma_0}dx\right)^{\frac{2-p}{\gamma_0}}.
	\end{split}
\end{align}
Applying Lemma \ref{lem:res3.7P} with $B_{\rho}(y) \subset B_{\rho'}(y) \subset B_{10R}$ and $p=\gamma_0$ shows that:
\begin{align*}
\left( \fint_{B_\rho(y)}{|\nabla w|^{\gamma_0} dx} \right)^{\frac{1}{\gamma_0}} \le C \left(\frac{\rho}{\rho'} \right)^{\beta_0} \left(\fint_{B_3\rho'/4(y)}{|\nabla w|^{\gamma_0} dx} \right)^{\frac{1}{\gamma_0}}.
\end{align*}
Moreover, since:
\begin{align*}
\int_{B_{\rho}(y)}{|\nabla w|^{\gamma_0}dx} \le C \int_{B_{5\rho(y_0)}}{|\nabla u|^{\gamma_0}dx},
\end{align*}
it follows that
\begin{align}
\label{eq:st2}
\begin{split}
\left(\fint_{B_\rho(y)}{|\nabla w|^{\gamma_0}dx} \right)^{\frac{1}{\gamma_0}} &\le C \left(\fint_{B_{5\rho}(y_0)}{|\nabla u|^{\gamma_0}dx} \right)^{\frac{1}{\gamma_0}}\\  &\le C \left(\frac{\rho}{\rho'} \right)^{\beta_0-1}\left(\fint_{B_{\rho'/4}(y_0)}{|\nabla u|^{\gamma_0}dx} \right)^{\frac{1}{\gamma_0}}\\ &\le C \left(\frac{\rho}{\rho'} \right)^{\beta_0-1}\left(\fint_{B_{\rho'/2}(y)}{|\nabla u|^{\gamma_0}dx} \right)^{\frac{1}{\gamma_0}}.
\end{split}
\end{align}
According to \eqref{eq:st1} and \eqref{eq:st2}, we get the estimate:
\begin{align*}
\left(\fint_{B_\rho(y)}{|\nabla u|^{\gamma_0}dx} \right)^{\frac{1}{\gamma_0}} &\leq \left(\fint_{B_\rho(y)}{|\nabla w|^{\gamma_0} dx} \right)^{\frac{1}{\gamma_0}} + \left(\fint_{B_\rho(y)}{|\nabla u-\nabla w|^{\gamma_0}dx} \right)^{\frac{1}{\gamma_0}}\\ &\leq C\left(\frac{\rho}{\rho'} \right)^{\beta_0-1} \left(\fint_{B_{\rho'/2(y)}}{|\nabla u|^{\gamma_0}dx} \right)^{\frac{1}{\gamma_0}}  + C\left(\frac{|\mu|(B_{\rho'}(y))}{\rho'^{n-1}} \right)^{\frac{1}{p-1}}\\ ~~~~~~ &+ C\frac{|\mu|(B_{\rho'}(y))}{\rho'^{n-1}} \left(\fint_{B_{\rho'(y)}}{|\nabla u|^{\gamma_0}}dx \right)^{\frac{2-p}{\gamma_0}}.
\end{align*}
Back to the integral on the corresponding ball, it gives
\begin{align}
\label{eq:st3}
\begin{split}
\left( \int_{B_\rho(y)}{|\nabla u|^{\gamma_0}dx}\right)^{\frac{1}{\gamma_0}} &\leq C\left(\frac{\rho}{\rho'} \right)^{\frac{n}{\gamma_0}+\beta_0-1} \left(\int_{B_{\rho'(y)}}{|\nabla u|^{\gamma_0}dx} \right)^{\frac{1}{\gamma_0}} \\ ~~~~~&+ C \rho^{\frac{n}{\gamma_0}}\left(\frac{|\mu|(B_{\rho'}(y))}{\rho'^{n-1}} \right)^{\frac{1}{p-1}}\\&~~+ C\rho^{\frac{n(p-1)}{\gamma_0}}\frac{|\mu|(B_{\rho'}(y))}{\rho'^{n-1}} \left(\frac{\rho}{\rho'}\right)^{\frac{n(2-p)}{\gamma_0}}\left(\int_{B_{\rho'(y)}}{|\nabla u|^{\gamma_0}}dx \right)^{\frac{2-p}{\gamma_0}}.
\end{split}
\end{align}
In the use of H\"older's inequality for the last term, it may be concluded that:
\begin{align}
\label{eq:btholderbound}
\begin{split}
\left( \int_{B_\rho(y)}{|\nabla u|^{\gamma_0}dx}\right)^{\frac{1}{\gamma_0}} &\leq C\left(\frac{\rho}{\rho'} \right)^{\frac{n}{\gamma_0}+\beta_0-1} \left(\int_{B_{\rho'(y)}}{|\nabla u|^{\gamma_0}dx} \right)^{\frac{1}{\gamma_0}} \\ ~~~~~&+ C_\varepsilon \rho^{\frac{n}{\gamma_0}}\left(\frac{|\mu|(B_{\rho'}(y))}{\rho'^{n-1}} \right)^{\frac{1}{p-1}}\\&~~+ \varepsilon \left(\frac{\rho}{\rho'}\right)^{\frac{n}{\gamma_0}}\left(\int_{B_{\rho'(y)}}{|\nabla u|^{\gamma_0}}dx \right)^{\frac{1}{\gamma_0}}.
\end{split}
\end{align}
The same proof works when we take Lemma \ref{lem:hanlin} in use. It is natural to set a function $\Phi$ as in \eqref{eq:Phi}. From \eqref{eq:btholderbound}, it can be written the inequality of the form
\begin{align*}
\begin{split}
\Phi(\rho)\leq C\left[\left(\frac{\rho}{\rho'} \right)^{\frac{n}{\gamma_0}+\beta_0-1} +\varepsilon\right]\Phi(\rho') + C_\varepsilon \rho^{\frac{n}{\gamma_0}}\left(\frac{|\mu|(B_{\rho'}(y))}{\rho'^{n-1}} \right)^{\frac{1}{p-1}}.
\end{split}
\end{align*}
It suffices to show that for any $\delta \in \left[-\frac{n-p}{p-1},\beta_0 \right)$, we have
\begin{align*}
\begin{split}
\Phi(\rho)&\leq C\left[\left(\frac{\rho}{\rho'} \right)^{\frac{n}{\gamma_0}+\beta_0-1} +\varepsilon\right]\Phi(\rho') + C_\varepsilon \rho'^{\frac{n}{\gamma_0}+\delta-1}\rho'^{1-\delta}\left(\frac{|\mu|(B_{\rho'}(y))}{\rho'^{n-1}} \right)^{\frac{1}{p-1}}\\& = C\left[\left(\frac{\rho}{\rho'} \right)^{\frac{n}{\gamma_0}+\beta_0-1} +\varepsilon\right]\Phi(\rho') + C_\varepsilon \rho'^{\frac{n}{\gamma_0}+\delta-1}\left(\frac{|\mu|(B_{\rho'}(y))}{\rho'^{n-1-(p-1)(1-\delta)}} \right)^{\frac{1}{p-1}},
\end{split}
\end{align*}
and here the supremum is taken for all $0<\rho'<T_0$, that yields
\begin{align*}
\begin{split}
\Phi(\rho)&\leq C\left[\left(\frac{\rho}{\rho'} \right)^{\frac{n}{\gamma_0}+\beta_0-1} +\varepsilon\right]\Phi(\rho') + C_\varepsilon \rho'^{\frac{n}{\gamma_0}+\delta-1}\left(\mathbf{M}_{\theta}^{T_0}(|\mu|)(y) \right)^{\frac{1}{p-1}}.
\end{split}
\end{align*}
For  $\varepsilon>0$ is chosen small enough, we apply Lemma \ref{lem:hanlin} where $\gamma= \frac{n}{\gamma_0}+\beta_0-1,\beta = \frac{n}{\gamma_0}+\delta-1$ with $\delta<\beta_0$ (as $\beta<\gamma$ in Lemma \ref{lem:hanlin}) and $B= \left(\mathbf{M}_{\theta}^{T_0}(|\mu|)(y) \right)^{\frac{1}{p-1}}$, for any $0<\rho<\rho'<T_0$ one gets
\begin{align*}
\Phi(\rho) \le C \left[\left(\frac{\rho}{\rho'} \right)^{\frac{n}{\gamma_0}+\beta_0-1} \Phi(\rho') + C\rho^{\frac{n}{\gamma_0}+\delta-1}  \left(\mathbf{M}_{\theta}^{T_0}(|\mu|)(y) \right)^{\frac{1}{p-1}}\right].
\end{align*}
Hence,
\begin{align}
\label{eq:inMthetabound}
\begin{split}
&\left( \int_{B_\rho(y)}{|\nabla u|^{\gamma_0}dx}\right)^{\frac{1}{\gamma_0}}\\& \leq  C \left[\left(\frac{1}{T_0} \right)^{\frac{n}{\gamma_0}+\delta-1} \left( \int_{\Omega}{|\nabla u|^{\gamma_0}dx}\right)^{\gamma_0} + \left(\mathbf{M}_{\theta}^{T_0}(|\mu|)(y) \right)^{\frac{1}{p-1}}\right] \rho^{\frac{n}{\gamma_0}+\delta-1}.
\end{split}
\end{align}
It follows easily from Remark \ref{rem:nablau} that 
\begin{align*}
\left(	\frac{1}{T_0^n}\int_{\Omega}|\nabla u|^{\gamma}\right)^{1/\gamma}\leq C_\gamma \left[\frac{|\mu|(\Omega)}{T_0^{n-1}}\right]^{\frac{1}{p-1}}, \quad \text{for any}\ \ \gamma\in \left(0,\frac{(p-1)n}{n-1}\right)
\end{align*}
which implies
\begin{align}
\label{eq:2inMbound}
\left(\frac{1}{T_0} \right)^{\frac{n}{\gamma_0}+\delta-1} \left( \int_{\Omega}{|\nabla u|^{\gamma_0}dx}\right)^{\frac{1}{\gamma_0}} \leq C \left(\frac{1}{T_0} \right)^{\delta-1} \left[\frac{|\mu|(\Omega)}{T_0^{n-1}}\right]^{\frac{1}{p-1}}\leq C \left(\mathbf{M}_{\theta}^{T_0}(|\mu|)(y) \right)^{\frac{1}{p-1}}.
\end{align}
From both \eqref{eq:inMthetabound} and \eqref{eq:2inMbound} have already proved, it completes the proof.
\end{proof}

As a consequence of Lemmas \ref{lem:res7.2H} and \ref{lem:res7.6H}, the supremum is taken for all $0<\rho<T_0$ and $y \in \Omega$, one of our results may be summarized in the following important Lemma.
\begin{lemma}
\label{lem:H:Sep26}
Let $\beta_0 \in (0,1/2]$ be as in Lemmas \ref{lem:res3.4P} and \ref{lem:res3.8P}. Then, for any $\delta \in \left[ -\frac{n-p}{p-1},\beta_0\right)$, there exists a constant $C = C(n,p,\alpha,\beta,c_0,\beta_0)>0$ such that:
\begin{align}
\label{eq:7276}
\sup_{y\in \Omega, \rho\in (0,T_0)} \rho^{-\frac{n}{\gamma_0}-\delta+1} \left( \int_{B_\rho(y)}{|\nabla u|^{\gamma_0}dx}\right)^{\frac{1}{\gamma_0}}\leq C\|\mathbf{M}_{\theta}^{T_0}(|\mu|)\|_{L^\infty(\Omega)}^{\frac{1}{p-1}},
\end{align}
where $\theta = 1+(p-1)(1-\delta)$, $T_0 = diam(\Omega)$, $0<\rho<T_0$.
\end{lemma}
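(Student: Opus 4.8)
\textbf{Proof proposal for Lemma \ref{lem:H:Sep26}.}
The plan is to patch together the interior gradient estimate of Lemma \ref{lem:res7.2H} and the boundary gradient estimate of Lemma \ref{lem:res7.6H} by a dichotomy on $d:=dist(y,\partial\Omega)$, and then pass to the supremum. Throughout, $\mu$ and $u$ are the extensions by zero to $\mathbb{R}^n\setminus\Omega$ used in the boundary subsection, so all the integrals below are meaningful; $\delta\in\left[-\frac{n-p}{p-1},\beta_0\right)$ is fixed, $\theta=1+(p-1)(1-\delta)$ and $T_0=diam(\Omega)$. I would take $\beta_0$ to be the smallest of the H\"older exponents produced by Lemmas \ref{lem:res3.4P}, \ref{lem:res3.8P}, \ref{lem:res7.2H} and \ref{lem:res7.6H}, so that all four estimates are simultaneously at our disposal. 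Note also that for $y\in\Omega$ one has $d\le T_0/2$.

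First I would fix $y\in\Omega$, $0<\rho<T_0$, and split into two cases. If $d\ge 2\rho$, then $\overline{B_{3\rho/2}(y)}$ has distance $\ge\rho/2$ to $\partial\Omega$, hence $B_\rho(y)\subset B_{3\rho/2}(y)\subset\subset\Omega$ and Lemma \ref{lem:res7.2H} gives directly
\begin{align*}
\left(\int_{B_\rho(y)}|\nabla u|^{\gamma_0}\,dx\right)^{\frac{1}{\gamma_0}}\le C\left(\mathbf{M}_{\theta}^{T_0}(|\mu|)(y)\right)^{\frac{1}{p-1}}\rho^{\frac{n}{\gamma_0}+\delta-1}.
\end{align*}
If instead $d<2\rho$ and in addition $3\rho<T_0$, then a nearest boundary point of $y$ lies within distance $d<3\rho$, so $B_{3\rho}(y)\cap\partial\Omega\neq\emptyset$; applying Lemma \ref{lem:res7.6H} to $B_{3\rho}(y)$ and using $B_\rho(y)\subset B_{3\rho}(y)$,
\begin{align*}
\left(\int_{B_\rho(y)}|\nabla u|^{\gamma_0}\,dx\right)^{\frac{1}{\gamma_0}}\le\left(\int_{B_{3\rho}(y)}|\nabla u|^{\gamma_0}\,dx\right)^{\frac{1}{\gamma_0}}\le C\left(\mathbf{M}_{\theta}^{T_0}(|\mu|)(y)\right)^{\frac{1}{p-1}}(3\rho)^{\frac{n}{\gamma_0}+\delta-1}.
\end{align*}
In both cases the fractional maximal function is evaluated at the admissible centre $y\in\Omega$, hence is bounded by $\|\mathbf{M}_{\theta}^{T_0}(|\mu|)\|_{L^\infty(\Omega)}$; multiplying by $\rho^{-\frac{n}{\gamma_0}-\delta+1}$ and taking the supremum over $y\in\Omega$ and over the $\rho\in(0,T_0)$ falling into these two cases yields \eqref{eq:7276} in that range.

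It then remains to handle the leftover regime $d<2\rho$ with $3\rho\ge T_0$, i.e.\ $\rho\in[T_0/3,T_0)$. Here I would argue exactly as in the passage \eqref{eq:inMtheta}--\eqref{eq:2inM} of the proof of Lemma \ref{lem:res7.2H} (and \eqref{eq:2inMbound}): estimate $\int_{B_\rho(y)}|\nabla u|^{\gamma_0}\,dx\le\int_{\Omega}|\nabla u|^{\gamma_0}\,dx$, invoke Remark \ref{rem:nablau} in the form $\left(T_0^{-n}\int_\Omega|\nabla u|^{\gamma_0}\right)^{1/\gamma_0}\le C\big[|\mu|(\Omega)\,T_0^{1-n}\big]^{1/(p-1)}$, observe that $\rho^{\frac{n}{\gamma_0}+\delta-1}$ is comparable to $T_0^{\frac{n}{\gamma_0}+\delta-1}$ for $\rho\in[T_0/3,T_0)$, and absorb the resulting factor $\big[|\mu|(\Omega)/T_0^{n-\theta}\big]^{1/(p-1)}$ into $\|\mathbf{M}_{\theta}^{T_0}(|\mu|)\|_{L^\infty(\Omega)}^{1/(p-1)}$ since $|\mu|(\Omega)/T_0^{n-\theta}\le C\,\mathbf{M}_{\theta}^{T_0}(|\mu|)(y)$ for $y\in\Omega$. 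Combining the three regimes and taking the supremum proves the lemma.

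I expect the only genuinely delicate point to be this last step: making the large-$\rho$ bound and, with it, the comparison of $|\mu|(\Omega)$ against the truncated maximal function $\mathbf{M}_{\theta}^{T_0}(|\mu|)$ on balls that nearly exhaust $\Omega$ fully rigorous (the same subtlety silently present in \eqref{eq:2inM} and \eqref{eq:2inMbound}). Everything else is the routine geometric dichotomy already performed, at the level of a single ball, inside the proofs of Lemmas \ref{lem:res7.2H} and \ref{lem:res7.6H}, now carried out uniformly in the centre and radius.
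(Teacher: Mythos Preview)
Your proposal is correct and follows exactly the route the paper intends: the paper presents Lemma \ref{lem:H:Sep26} simply ``as a consequence of Lemmas \ref{lem:res7.2H} and \ref{lem:res7.6H}'' without writing out the dichotomy, and your interior/boundary splitting on $d=\mathrm{dist}(y,\partial\Omega)$ together with the large-$\rho$ endgame via Remark \ref{rem:nablau} is precisely the patching argument that makes that sentence rigorous. You have in fact supplied more detail than the paper does, and your flagging of the $|\mu|(\Omega)/T_0^{n-\theta}\le C\,\mathbf{M}_\theta^{T_0}(|\mu|)(y)$ step as the one delicate point is accurate---it is the same issue the paper passes over in \eqref{eq:2inM} and \eqref{eq:2inMbound}.
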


\section{Proofs of Theorem \ref{theo:lambda_estimate} and \ref{theo:P}}
\label{sec:main}
This section is devoted to separable proofs of our main results in Theorem \ref{theo:lambda_estimate}, \ref{theo:bst} and \ref{theo:P}. Here, our proof techniques are global up to the boundary results.

We first prove the form of Theorem \ref{theo:lambda_estimate}. The main tools are properties of Hardy-Littlewood maximal function and the following lemma, that can be viewed as a substitution for the Calder\'on-Zygmund-Krylov-Safonov decomposition.

\begin{lemma}
\label{lem:mainlem}
Let $0<\varepsilon<1, R>0$ and the ball $Q:=B_R(x_0)$ for some $x_0\in \mathbb{R}^n$.  Let $E\subset F\subset Q$ be two measurable sets in $\mathbb{R}^{n+1}$ with $\mathcal{L}^n(E)<\varepsilon \mathcal{L}^n(Q)$ and satisfying the following property: for all $x\in Q$ and $r\in (0,R]$, we have $B_r(x)\cap Q\subset F$  	provided $\mathcal{L}^n(E\cap B_r(x))\geq \varepsilon \mathcal{L}^n(B_r(x))$.	Then $\mathcal{L}^n(E)\leq C\varepsilon \mathcal{L}^n(F)$ for some $C=C(n)$.
\end{lemma}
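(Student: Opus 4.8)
\textbf{Proof proposal for Lemma \ref{lem:mainlem}.}

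The plan is to apply the Calderón--Zygmund--Krylov--Safonov decomposition technique adapted to the ball $Q=B_R(x_0)$, using a Vitali-type covering argument as the backbone. First I would fix $\varepsilon\in(0,1)$ and the set $E$ with $\mathcal{L}^n(E)<\varepsilon\mathcal{L}^n(Q)$. The idea is to cover (up to a null set) the ``bad'' set $E$ by countably many balls, on each of which the density of $E$ equals (or is controlled by) $\varepsilon$, and to simultaneously certify that the concentric enlargements of these balls lie inside $F$.

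The key steps, in order, are as follows. \emph{Step 1 (dyadic/continuous stopping-time argument).} For $\mathcal{L}^n$-a.e.\ $x\in E$, the Lebesgue density theorem gives $\lim_{r\to 0}\mathcal{L}^n(E\cap B_r(x))/\mathcal{L}^n(B_r(x))=1>\varepsilon$, while the global hypothesis $\mathcal{L}^n(E)<\varepsilon\mathcal{L}^n(Q)\le\varepsilon\mathcal{L}^n(B_R(x))$ shows the density is $<\varepsilon$ at scale $r=R$. Hence for a.e.\ $x\in E$ there is a largest radius $r_x\in(0,R)$ with $\mathcal{L}^n(E\cap B_{r_x}(x))=\varepsilon\mathcal{L}^n(B_{r_x}(x))$ (by continuity of $r\mapsto\mathcal{L}^n(E\cap B_r(x))$), and for all $r\in(r_x,R]$ one has $\mathcal{L}^n(E\cap B_r(x))<\varepsilon\mathcal{L}^n(B_r(x))$. \emph{Step 2 (Vitali covering).} The family $\{B_{r_x}(x): x\in E\text{ admissible}\}$ covers $E$ up to a null set; apply the Vitali $5r$-covering lemma to extract a countable disjoint subfamily $\{B_{r_i}(x_i)\}_i$ such that $E\subset\mathcal{N}\cup\bigcup_i B_{5r_i}(x_i)$ with $\mathcal{L}^n(\mathcal{N})=0$. \emph{Step 3 (density bookkeeping).} By the stopping-time maximality, at scale $5r_i$ (if $5r_i\le R$) we have $\mathcal{L}^n(E\cap B_{5r_i}(x_i))<\varepsilon\mathcal{L}^n(B_{5r_i}(x_i))$; summing over the disjoint family, $\mathcal{L}^n(E)\le\sum_i\mathcal{L}^n(E\cap B_{5r_i}(x_i))$. \emph{Step 4 (invoke the hypothesis on $F$).} At scale $r_i$ itself we have exactly $\mathcal{L}^n(E\cap B_{r_i}(x_i))=\varepsilon\mathcal{L}^n(B_{r_i}(x_i))\ge\varepsilon\mathcal{L}^n(B_{r_i}(x_i))$, so the defining property of $F$ gives $B_{r_i}(x_i)\cap Q\subset F$. \emph{Step 5 (assemble).} Combining Steps 3--4: since the $B_{r_i}(x_i)$ are disjoint and each $B_{r_i}(x_i)\cap Q\subset F$, and $\mathcal{L}^n(B_{5r_i}(x_i)\cap Q)\le C(n)\mathcal{L}^n(B_{r_i}(x_i)\cap Q)$ (the truncation by $Q$ does not destroy the doubling comparison, as one may check separately for balls with $x_i\in Q$), we get
\begin{align*}
\mathcal{L}^n(E)\le\varepsilon\sum_i\mathcal{L}^n(B_{5r_i}(x_i))\le C(n)\,\varepsilon\sum_i\mathcal{L}^n(B_{r_i}(x_i)\cap Q)\le C(n)\,\varepsilon\,\mathcal{L}^n(F),
\end{align*}
which is the claim.

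The main obstacle I expect is the careful handling of the boundary of $Q$ and the radii that may exceed $R$ after the $5r$-dilation: one must ensure that the stopping radius $r_x$ is well defined for a.e.\ $x\in E$ (this uses $\mathcal{L}^n(E)<\varepsilon\mathcal{L}^n(Q)$ crucially at scale $R$, but for points near $\partial Q$ the ball $B_R(x)$ is not contained in $Q$, so one works with $\mathcal{L}^n(E\cap B_R(x))\le\mathcal{L}^n(E)<\varepsilon\mathcal{L}^n(Q)\le\varepsilon\mathcal{L}^n(B_R(x))$ directly), and that the comparison $\mathcal{L}^n(B_{5r_i}(x_i)\cap Q)\le C(n)\mathcal{L}^n(B_{r_i}(x_i)\cap Q)$ survives intersection with $Q$. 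The latter is the standard fact that a ball centered in a fixed ball $Q$ of radius $R$ satisfies a relative doubling estimate with a dimensional constant, which one proves by splitting into the cases $r_i$ comparable to $R$ and $r_i\ll R$. Everything else is routine measure theory and the Vitali lemma.
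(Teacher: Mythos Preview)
The paper does not actually prove Lemma~\ref{lem:mainlem}; it is simply stated as a known tool (``a substitution for the Calder\'on--Zygmund--Krylov--Safonov decomposition'') and then invoked in the proof of Theorem~\ref{theo:lambda_estimate}. So there is no proof in the paper to compare your proposal against.

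Your argument via Lebesgue density points, a continuous stopping-time choice of $r_x$, and the Vitali $5r$-covering lemma is a standard and correct way to establish this type of covering lemma. The two boundary issues you flag are handled exactly as you describe: for $x\in E\subset Q$ one has $\mathcal{L}^n(E\cap B_R(x))\le\mathcal{L}^n(E)<\varepsilon\mathcal{L}^n(Q)=\varepsilon\mathcal{L}^n(B_R(x))$, so $r_x<R$ is well defined; and the relative doubling $\mathcal{L}^n(B_r(x))\le 2^n\mathcal{L}^n(B_r(x)\cap Q)$ for $x\in Q$, $0<r\le R$, follows by noting that $B_r(x)\cap Q$ always contains a ball of radius $r/2$. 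The case $5r_i>R$ in Step~3 is also harmless, since then $\mathcal{L}^n(E\cap B_{5r_i}(x_i))\le\mathcal{L}^n(E)<\varepsilon\mathcal{L}^n(Q)\le\varepsilon\mathcal{L}^n(B_{5r_i}(x_i))$ directly. Your chain in Step~5 then gives $\mathcal{L}^n(E)\le 10^n\varepsilon\,\mathcal{L}^n(F)$.
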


\begin{proof}[Proof of Theorem \ref{theo:lambda_estimate}]
Let $\mu_0,\lambda_k^+,\lambda_k^-$ be as in Definition \ref{def:renormsol3}. Let $u$ be the renormalized solution to \eqref{eq:elliptictype} and $u_k\in W_0^{1,p}(\Omega)$ be the unique solution to the following problem:
	\begin{equation*}
	\left\{
	\begin{array}[c]{rcl}
	-\text{div}(A(x,\nabla u_k))&=&\mu_{k} \quad \text{in } \quad\Omega,\\
	{u}_{k}&=&0\quad \ \text{on } \quad \partial\Omega,\\
	\end{array}
	\right.  
	\end{equation*}
	where  $\mu_k=\chi_{\{|u|<k\}}\mu_0+\lambda_k^+-\lambda_k^-$.
	
Note that we have  $u_k=T_k(u) \in W^{1,p}(\Omega)$ and $\mu_k \rightarrow \mu$ weakly in the narrow topology of measures (\ref{def:renormsol3}). In this way one obtains $\nabla u_k - \nabla u = \nabla u \cdot \chi_{\{|u|\ge k\}}$ almost everywhere and it is easily seen that
	\begin{equation}
	\label{grad-appr}
	\nabla u_k\to \nabla u  \quad \text{in} \quad L^\gamma(\Omega), \quad\forall \gamma\in \left(0,\frac{(p-1)n}{n-1}\right).
	\end{equation}

Let $\gamma_0$ satisfy $\frac{2-p}{2} \le \gamma_0 < \frac{(p-1)n}{n-1} \le 1$ given as in Lemmas \ref{lem:estimateinter} and \ref{lem:estimatebound}. For given $\varepsilon>0, \lambda>0$ and $r_0>0$ as in \eqref{eq:capuni}, let us set 
\begin{align*}
E_{\lambda,\varepsilon}=\left\{({\bf M}(\chi_{D_2}|\nabla u|^{\gamma_0}))^{1/\gamma_0}>\varepsilon^{-\frac{1}{\Theta}}\lambda, (\mathbf{M}_1(\chi_{D_2}\mu))^{\frac{1}{p-1}}\le \varepsilon^{\frac{1}{(p-1)\gamma_0}}\lambda \right\}\cap D_1,
\end{align*}
and 
\begin{align*}
F_\lambda=\left\{ ({\bf M}(\chi_{D_2}|\nabla u|^{\gamma_0}))^{1/\gamma_0}> \lambda\right\}\cap D_1,
\end{align*}
for $\lambda>\varepsilon^{-\frac{1}{(p-1)\gamma_0}}||\nabla u||_{L^{\gamma_0}(D_2)}R^{-\frac{n}{\gamma_0}}$. 

Our purpose here is to prove that there exist $\Theta$ and $C$ such that \eqref{eq:mainlambda} holds, it means that $\mathcal{L}^n\left( E_{\lambda,\varepsilon}\right) \le C\varepsilon \mathcal{L}^n \left( F_\lambda\right)$. And our proof proceeds in a series of two simple steps to verify Lemma \ref{lem:mainlem}. 

\emph{First step}. Let us verify that $$\mathcal{L}^n(E_{\lambda,\varepsilon}) < \varepsilon\mathcal{L}^n(B_R),$$
	  for all $\lambda>\varepsilon^{-\frac{2}{\gamma_0}+\frac{1}{\Theta}}\left\|\nabla u\right\|_{L^{\gamma_0}(D_2)}R^{-\frac{n}{\gamma_0}}$.
	  Indeed, without loss of generality, we may assume that $E_{\lambda,\varepsilon}\not=\emptyset$ (cause if $E_{\lambda,\varepsilon}=\emptyset$, it holds obiviously). Then, for the range of singular $p$, it gives
	\begin{align}
	\label{eq:bt3}
	|\mu| (\Omega)\leq T_0^{n-1}\left(\varepsilon^{\frac{1}{(p-1)\gamma_0}}\lambda\right)^{p-1}.
	\end{align}
	Since ${\bf M}$ is a bounded operator from $L^1(\mathbb{R}^{n})$ into $L^{1,\infty}(\mathbb{R}^{n})$, following Remark \ref{rem:boundM} with $s=1$, $g = |\nabla u|^{\gamma_0}$ and $t=\left(\varepsilon^{-\frac{1}{\Theta}}\lambda\right)^{\gamma_0}$, for all $\lambda>\varepsilon^{-\frac{1}{(p-1)\gamma_0}}||\nabla u||_{L^{\gamma_0}(D_2)}R^{-\frac{n}{\gamma_0}}$ we will get
	 \begin{align}
	 \label{eq:bt5}
	 \begin{split}
	  \mathcal{L}^n\left( E_{\lambda,\varepsilon}\right)&\leq \frac{C}{\left(\varepsilon^{-\frac{1}{\Theta}}\lambda\right)^{\gamma_0}}\int_{D_2}|\nabla u|^{\gamma_0}dx\\
	  &\leq  \frac{C}{\left(\varepsilon^{-\frac{1}{\Theta}}\varepsilon^{-\frac{2}{\gamma_0}+\frac{1}{\Theta}}\left\|\nabla u\right\|_{L^{\gamma_0}(D_2)}R^{-\frac{n}{\gamma_0}}\right)^{\gamma_0}}\int_{D_2}|\nabla u|^{\gamma_0}dx\\
	  &\leq C\varepsilon^{2}\mathcal{L}^n\left(D_2\right).
	  \end{split}
	 \end{align}
	  It is clear to apply Remark \ref{rem:nablau}, with $\gamma=\gamma_0$ shows that:
\begin{align}
\label{es14}
\left(	\frac{1}{T_0^n}\int_{\Omega}|\nabla u|^{\gamma_0}\right)^{1/\gamma_0}\leq C_{\gamma_0} \left[\frac{|\mu|(\Omega)}{T_0^{n-1}}\right]^{\frac{1}{p-1}}.
\end{align}
Then, in the use of \eqref{es14} and \eqref{eq:bt5} we get that
\begin{align}
\begin{split}
\mathcal{L}^n\left( E_{\lambda,\varepsilon}\right) &\leq \frac{CT_0^n}{\left(\varepsilon^{-\frac{1}{\Theta}}\lambda\right)^{\gamma_0}} \left[\frac{T_0^{n-1}\left(\varepsilon^{\frac{1}{(p-1)\gamma_0}}\lambda\right)^{p-1}}{T_0^{n-1}}\right]^{\frac{\gamma_0}{p-1}} \\
&=C\varepsilon^{\left(\frac{1}{\Theta}+\frac{1}{(p-1)\gamma_0}\right)\gamma_0}\mathcal{L}^n\left( B_{R}\right) \\ 
&< C\varepsilon\mathcal{L}^n\left( B_{R}\right).
\end{split}
\end{align}
\emph{Second step}. It is sufficient for us to verify that for all $x\in D_1$, $r\in (0,R]$, and $\lambda>\varepsilon^{-\frac{2}{\gamma_0}+\frac{1}{\Theta}}\left\|\nabla u\right\|_{L^{\gamma_0}(D_2)}R^{-\frac{n}{\gamma_0}}$ one has:
	 \begin{equation}\label{eq:bt6}
	\mathcal{L}^n\left( E_{\lambda,\varepsilon}\cap B_r(x)\right)\geq C\varepsilon \mathcal{L}^n\left(B_r(x)\right) \Longrightarrow B_r(x)\cap D_1\subset F_\lambda.
	 \end{equation}
	 	
	Indeed, let $x\in D_1$ and $0<r\leq R$, and by contradiction, let us assume that $B_r(x)\cap D_1\cap F^c_\lambda\not= \emptyset$ and $E_{\lambda,\varepsilon}\cap B_r(x)\not = \emptyset$. Then, there exist $x_1,x_2\in B_r(x)\cap D_1$ such that
	 \begin{align}
	 \label{eq:bt7}
	 \left[{\bf M}(\chi_{D_2}|\nabla u|^{\gamma_0})(x_1)\right]^{1/\gamma_0}\leq \lambda,
\end{align}	 	 
	 and 
	 \begin{align}
	 \label{eq:bt8}
	 \mathbf{M}_1(\chi_{D_2}\mu)(x_2)\le \left(\varepsilon^{ \frac{1}{(p-1)\gamma_0}} \lambda\right)^{p-1}.
	 \end{align}
	 One needs to prove that there exists a constant $C>0$ depending on $n,p,\alpha, \beta,c_0$ and $diam(\Omega)/{r_0}$ such that the following estimate holds:
	 \begin{equation}\label{5hh2310133}
	\mathcal{L}^n\left( E_{\lambda,\varepsilon}\cap B_r(x)\right)< C \varepsilon \mathcal{L}^n\left(B_r(x)\right). 
	 \end{equation}	 
	To do this, for $y \in B_r(x)$ and $\rho>0$, firstly we have
	\begin{align*}
	\left(\fint_{B_\rho(y)}\chi_{D_2}|\nabla u|^{\gamma_0}dx\right)^{1/\gamma_0}\leq  \sup\left\{\Lambda_1, \bar{\Lambda_1}\right\},
	\end{align*}
	where 
	\begin{align*}
	\begin{cases}
	\Lambda_1 &= \displaystyle{\left(\sup_{\rho'<r}\fint_{B_{\rho'}(y)}\chi_{B_{2r}(x)}\chi_{D_2}|\nabla u|^{\gamma_0}dx\right)^{1/\gamma_0}}, \\
	\bar{\Lambda_1} &= \displaystyle{\left(\sup_{\rho'\geq r}\fint_{B_{\rho'}(y)}\chi_{D_2}|\nabla u|^{\gamma_0}dx\right)^{1/\gamma_0}}.
	\end{cases}
	\end{align*}	
	For  $\rho'\geq r$, one has $B_{\rho'}(y)\subset B_{\rho'+r}(x)\subset B_{\rho'+2r}(x_1)\subset B_{3\rho'}(x_1)$. Thus, 
	\begin{align*}
	\left(\fint_{B_\rho(y)}\chi_{D_2}|\nabla u|^{\gamma_0}dx\right)^{1/\gamma_0}\leq  \sup\left\{\Lambda_2, \bar{\Lambda_2}\right\} \leq  \sup\left\{\Lambda_2, 3^{\frac{n}{\gamma_0}} \lambda\right\}.
	\end{align*}
	where 
	\begin{align*}
	\begin{cases}
	\Lambda_2 &= \displaystyle{\left[{\bf M}\left(\chi_{D_2}\chi_{B_{2r}(x)}|\nabla u|^{\gamma_0}\right)(y)\right]^{\frac{1}{\gamma_0}}},\\ 
	\bar{\Lambda_2} &= \displaystyle{3^{\frac{n}{\gamma_0}} \left(\sup_{\rho'>0}\fint_{B_{\rho'}(x_1)}\chi_{D_2}|\nabla u|^{\gamma_0}dx\right)^{1/\gamma_0}}.
	\end{cases}
	\end{align*}
	Taking the supremum both sides for $\rho>0$, it can be seen clearly that:
	 \begin{equation*}
	\left( {\bf M}(\chi_{D_2}|\nabla u|^{\gamma_0})(y)\right)^{1/\gamma_0}\leq \max\left\{\left[{\bf M}\left(\chi_{D_2}\chi_{B_{2r}(x)}|\nabla u|^{\gamma_0}\right)(y)\right]^{\frac{1}{\gamma_0}},3^{\frac{n}{\gamma_0}}\lambda\right\},~\forall y\in B_r(x).
	 \end{equation*}
	 Therefore, for all $\lambda>0$ and $\varepsilon$ satisfies $\varepsilon^{-\frac{1}{\Theta}}>3^{\frac{n}{\gamma_0}}$, it shows that
	 \begin{align}
	 \label{5hh2310134}
	 \begin{split}
	 E_{\lambda,\varepsilon}\cap B_r(x)=\left\{{\bf M}\left(\chi_{D_2}\chi_{B_{2r}(x)}|\nabla u|^{\gamma_0}\right)^{\frac{1}{\gamma_0}}>\varepsilon^{-\frac{1}{\Theta}}\lambda, (\mathbf{M}_{1}(\chi_{D_2}\mu))^{\frac{1}{p-1}}\leq\varepsilon^{\frac{1}{(p-1)\gamma_0}}\lambda\right\} \\ ~~~~~\cap D_1\cap B_r(x).
	 \end{split}
	 \end{align}
	 
	  In order to prove \eqref{5hh2310133} we separately consider for the case $B_{4r}(x)\subset\subset \Omega$ (interior) and the case $B_{4r}(x)\cap \Omega^{c}\not=\emptyset$ (boundary). The proof falls naturally into two cases. \\ \medskip\\
	   \textbf{Case 1: $B_{4r}(x)\subset\subset\Omega$}: Applying Lemma \ref{lem:estimateinter} for $u_k \in W^{1,p}_{0}(\Omega)$ and $w_k$ the solution to:
	\begin{equation}
	\label{eq:wsol}
	\begin{cases}
	\div (A(x,\nabla w_k)) &=0, \quad \text{in}\ \ B_{4r}(x),\\
	w_k &= u_k, \quad \text{on} \ \ \partial B_{4r}(x),
	\end{cases}
	\end{equation}
with $\mu = \mu_k$ and $B_{2R} = B_{4r}(x)$, one has a constant $C = C(n,p,\alpha,\beta,c_0,T_0/r_0)>0$ such that:
	\begin{align}
	\label{eq:btgeneral}
	\begin{split}
	\left(\fint_{B_{4r}(x)}{|\nabla u_k - \nabla w_k|^{\gamma_0}dx} \right)^{\frac{1}{\gamma_0}} &\le C\left[\frac{|\mu_k|(B_{4r}(x))}{r^{n-1}}\right]^{\frac{1}{p-1}}\\ &+ C\frac{|\mu_k|(B_{4r}(x))}{r^{n-1}} \left(\fint_{B_{4r}(x)}{|\nabla u_k|^{\gamma_0}dx} \right)^{\frac{2-p}{\gamma_0}}.
	\end{split}
	\end{align}
	Otherwise, Lemma \ref{111120147} is also applied to give:
	\begin{align}
\label{eq:bt17}
\begin{split}
	\left(\fint_{B_{2r}(x)}|\nabla w_k|^{\frac{1}{\Theta}} dx\right)^{\Theta} &\leq C \left(\fint_{B_{4r}(x)}|\nabla w_k|^{p-1} dx\right)^{\frac{1}{p-1}} \\ &\le C \left(\fint_{B_{4r}(x)}{|\nabla u_k|^{\gamma_0}dx} \right)^{\frac{1}{\gamma_0}}\\&+ C\left( \fint_{B_{4r}(x)}{|\nabla u_k - \nabla w_k|^{\gamma_0}dx} \right)^{\frac{1}{\gamma_0}},
\end{split}
\end{align}
where, the second inequality is obtained by using H\"older's inequality and for $\gamma_0>p-1$. On the other hand, it is easily to check that
	\begin{align}
	\label{eq:bt11}
	\begin{split}	
	& \mathcal{L}^n\left( E_{\lambda,\varepsilon}\cap B_r(x)\right) \\ &\leq \mathcal{L}^n\left( \{{\bf M}\left(\chi_{D_2}\chi_{B_{2r}(x)}|\nabla (u_k-w_k)|^{\gamma_0}\right)^{\frac{1}{\gamma_0}}>3^{-\frac{1}{\gamma_0}}\varepsilon^{- \frac{1}{\Theta}}\lambda\}\cap B_r(x)\right)\\&+ \mathcal{L}^n\left( \{{\bf M}\left(\chi_{D_2}\chi_{B_{2r}(x)}|\nabla (u-u_k)|^{\gamma_0}\right)^{\frac{1}{\gamma_0}}>3^{-\frac{1}{\gamma_0}}\varepsilon^{- \frac{1}{\Theta}}\lambda\}\cap B_r(x)\right) \\&+
	\mathcal{L}^n\left(  \{{\bf M}\left(\chi_{D_2}\chi_{B_{2r}(x)}|\nabla w_k|^{\gamma_0}\right)^{\frac{1}{\gamma_0}}>3^{-\frac{1}{\gamma_0}}\varepsilon^{- \frac{1}{\Theta}}\lambda\}\cap B_r(x)\right).  
	  \end{split}
	\end{align}
and by using Remark \ref{rem:boundM} for each term on right hand side of \eqref{eq:bt11}, one gives
	\begin{align}
	\label{eq:bt12}
	\begin{split}
	\mathcal{L}^n\left( E_{\lambda,\varepsilon}\cap B_r(x)\right) &\le \frac{C}{\left(\varepsilon^{-\frac{1}{\Theta}}\lambda\right)^{\gamma_0}}\left[ \int_{B_{2r}(x)}{\chi_{D_2}|\nabla u_k - \nabla w_k|^{\gamma_0}dx}+ \right. \\& \left. ~~~~~~~~~~~~~~~~~~~+ \int_{B_{2r}(x)}{\chi_{D_2}|\nabla u - \nabla u_k|^{\gamma_0}dx}\right]\\ &+\frac{C}{\left(\varepsilon^{-\frac{1}{\Theta}}\lambda\right)^{\Theta}}\int_{B_{2r}(x)}{\chi_{D_2}|\nabla w_k|^{\Theta}dx}.
	\end{split}
	\end{align}
	Combining inequalities \eqref{eq:btgeneral} with \eqref{eq:bt17} to \eqref{eq:bt12} yields
	\begin{align*}
	&\mathcal{L}^n\left( E_{\lambda,\varepsilon} \cap B_r(x)\right) \\&\le \varepsilon^{\gamma_0\frac{1}{\Theta}}\lambda^{-\gamma_0}r^n \left[C\left( \frac{|\mu_k|(B_{4r}(x))}{r^{n-1}} \right)^{\frac{1}{p-1}}+ \right.\\&\left.~~~~~~~~~~~~~~+C\frac{|\mu_k|(B_{4r}(x))}{r^{n-1}} \left(\fint_{B_{4r}(x)}{|\nabla u_k|^{\gamma_0}dx} \right)^{\frac{2-p}{\gamma_0}} \right]^{\gamma_0} \\ &+ C\varepsilon^{\gamma_0\frac{1}{\Theta}}\lambda^{-\gamma_0}\int_{B_{4r}(x)}{|\nabla u - \nabla u_k|^{\gamma_0}dx} \\
&+ C \varepsilon \lambda^{-\Theta}r^n\left[C\left(\int_{B_{4r}(x)}{|\nabla u_k|^{\gamma_0}dx} \right)^{\frac{1}{\gamma_0}} + C \left( \frac{|\mu_k|(B_{4r}(x))}{r^{n-1}} \right)^{\frac{1}{p-1}}+ \right.\\&\left.~~~~+ C\frac{|\mu_k|(B_{4r}(x))}{r^{n-1}} \left(\fint_{B_{4r}(x)}{|\nabla u_k|^{\gamma_0}dx} \right)^{\frac{2-p}{\gamma_0}} \right]^{\Theta}.
	\end{align*}
	Letting $k \to \infty$ and thanks to \eqref{eq:bt7} and \eqref{eq:bt8} one obtains:
	\begin{align*}
		&\mathcal{L}^n\left( E_{\lambda,\varepsilon} \cap B_r(x)\right) \\&\le \varepsilon^{\gamma_0\frac{1}{\Theta}}\lambda^{-\gamma_0} r^n \left[C\left( 	\frac{|\mu|(\overline{B_{4r}(x))}}{r^{n-1}} \right)^{\frac{1}{p-1}}+\right.\\&\left.~~~~~~~~~~~~~~+C	\frac{|\mu|(\overline{B_{4r}(x))}}{r^{n-1}} \left(\fint_{B_{4r}(x)}{|\nabla u|^{\gamma_0}dx} \right)^{\frac{2-p}{\gamma_0}} \right]^{\gamma_0} +
	\\	
		&+ C \varepsilon\lambda^{-\Theta}r^n\left[C\left(\int_{B_{4r}(x)}{|\nabla u|^{\gamma_0}dx} \right)^{\frac{1}{\gamma_0}} + C \left( \frac{|\mu|(\overline{B_{4r}(x))}}{r^{n-1}} \right)^{\frac{1}{p-1}}+ \right.\\&\left.~~~~+ C\frac{|\mu|(\overline{B_{4r}(x))}}{r^{n-1}} \left(\fint_{B_{4r}(x)}{|\nabla u|^{\gamma_0}dx} \right)^{\frac{2-p}{\gamma_0}} \right]^{\Theta}.
	\end{align*}
	As $|x-x_1|<r$, $B_{4r}(x)\subset B_{5r}(x_1)$. This gives:
		\begin{align}\label{eq:btp1}
		\begin{split}
	\fint_{B_{4r}(x)}|\nabla u|^{\gamma_0}dx&\leq  \frac{|B_5(0)|}{|B_4(0)|} 	\fint_{B_{5r}(x_1)}|\nabla u|^{\gamma_0}dx\\&\leq C\sup_{\rho>0} \fint_{B_{\rho}(x_1)}|\nabla u|^{\gamma_0}dx
	\\&= C\mathbf{M}\left(|\nabla u|^{\gamma_0}\right)(x_1).
	\end{split}
	\end{align}
	Similarly, as $|x-x_2|<r$, we obtain $B_{4r}(x)\subset B_{5r}(x_2)\subset D_2$ and for all $\rho>0$, it finds:
	\begin{align}\label{eq:btp2}
	\frac{|\mu|(\overline{B_{4r}(x)}))}{r^{n-1}} &\le \frac{|\mu|(B_{5\rho}(x_2))}{\rho^{n-1}}\leq 5^{n-1} \mathbf{M}_1(\chi_{D_2}\mu)(x_2).
	\end{align}	
	Applying \eqref{eq:btp1} and \eqref{eq:btp2} together with \eqref{eq:bt7}, \eqref{eq:bt8} yields that:	
	\begin{align*}
	\mathcal{L}^n\left(E_{\lambda,\varepsilon} \cap B_r(x)\right) &\le \varepsilon^{\gamma_0  \frac{1}{\Theta}+\gamma_0  \frac{1}{(p-1)\gamma_0}} r^n\left(C+C\varepsilon^{ \frac{1}{(p-1)\gamma_0} (p-2)}\right)^{\gamma_0}\\
	&+ C\varepsilon r^n\left(1+\varepsilon^{ \frac{1}{(p-1)\gamma_0}}+\varepsilon^{ \frac{1}{(p-1)\gamma_0}(p-1)}\right)^{\Theta}\\
	&\leq C\left[\varepsilon^{\gamma_0 \frac{1}{\Theta}+\gamma_0(p-1) \frac{1}{(p-1)\gamma_0}}+\varepsilon\right] r^n \\
	& \le C\varepsilon r^n.
	\end{align*} 
	which establishes our desired \eqref{5hh2310133}.\medskip\\
		 \textbf{Case 2: $B_{4r}(x) \cap \Omega^c \neq \emptyset$}:
	Let $x_3 \in \partial\Omega$ such that $|x_3-x|=\text{dist}(x,\partial\Omega)\leq 4r$. It is not difficult to check that:
	\begin{align*}
	B_{4r}(x) \subset B_{10r}(x_3)\subset D_2.
	\end{align*}
	Applying Lemma \ref{lem:estimatebound} for $u_k \in W^{1,p}_{0}(\Omega)$ and $w_k$ being solution to:
	\begin{equation}
	\label{eq:wsol10R}
	\begin{cases}
	\div(A(x,\nabla w_k)) &=0, \quad \text{in}\ \ B_{10r}(x_3),\\
	w_k &= u_k, \quad \text{on} \ \ \partial B_{10r}(x_3),
	\end{cases}
	\end{equation}
for $\mu = \mu_k$ and $B_{2R} = B_{10R}(x_3)$, one has a constant $C = C(n,p,\alpha,\beta, c_0,T_0/r_0)>0$ such that:
	\begin{align} 
	\label{eq:estbound1}
	\begin{split}
	\left(\fint_{B_{10r}(x_3)}{|\nabla u_k - \nabla w_k|^{\gamma_0}dx} \right)^{\frac{1}{\gamma_0}} &\le C\left[\frac{|\mu_k|(B_{10r}(x_3))}{r^{n-1}} \right]^{\frac{1}{p-1}}\\ &+ C\frac{|\mu_k|(B_{10r}(x_3))}{r^{n-1}} \left(\fint_{B_{10r}(x_3)}{|\nabla u_k|^{\gamma_0}dx} \right)^{\frac{2-p}{\gamma_0}},
	\end{split}
	\end{align}
	and for all $\rho>0$ satisfies $B_{\rho}(y)\subset B_{10r}(x_3)$, following Lemma \ref{111120147**} one has
	\begin{align}
	\label{eq:estbound2}
	\begin{split}
	\left(\fint_{B_{\rho/2}(y)}|\nabla w_k|^{\Theta} dx\right)^{\frac{1}{\Theta}} &\leq C \left(\fint_{B_{\rho}(y)}|\nabla w_k|^{p-1} dx\right)^{\frac{1}{p-1}}, ~~\Theta>p.
\end{split}
	\end{align}
	As a version of \eqref{eq:bt12} in the ball $B_{10r}(x_3)$, one gives:
	\begin{align}
	\label{eq:es121}
	\begin{split}
\mathcal{L}^n\left(E_{\lambda,\varepsilon}\cap B_r(x)\right) &\le \frac{C}{\left(\varepsilon^{-\frac{1}{\Theta}}\lambda\right)^{\gamma_0}}\left[ \int_{B_{10r}(x_3)}{|\nabla u_k - \nabla w_k|^{\gamma_0}dx} + \right.\\
&\left.+ \int_{B_{10r}(x_3)}{|\nabla u - \nabla u_k|^{\gamma_0}dx}\right] +\frac{C}{\left(\varepsilon^{-\frac{1}{\Theta}}\lambda\right)^{\Theta}}\int_{B_{10r}(x_3)}{|\nabla w_k|^{\Theta}dx}.
	\end{split}
	\end{align}	 
	Since $B_{4r}(x)\subset B_{10r}(x_3)$, similar to \eqref{eq:bt17}, we obtain:
\begin{align}\label{eq:es122}
\begin{split}
\left(\fint_{B_{2r}(x)}|\nabla w_k|^{\Theta} dx\right)^{\frac{1}{\Theta}} &\leq C \left(\fint_{B_{10r}(x_3)}|\nabla w_k|^{p-1} dx\right)^{\frac{1}{p-1}} \\ 
&\le C \left(\fint_{B_{10r}(x_3)}{|\nabla u_k|^{\gamma_0}dx} \right)^{\frac{1}{\gamma_0}}\\ &+ C\left( \fint_{B_{10r}(x_3)}{|\nabla u_k - \nabla w_k|^{\gamma_0}dx} \right)^{\frac{1}{\gamma_0}},
\end{split}
\end{align}
where the second inequality is obtained by using H\"older's inequality and for $\gamma_0>p-1$.
	On the ball $B_{10r}(x_3)$, applying these estimates \eqref{eq:estbound1} and \eqref{eq:estbound2} with \eqref{eq:es122} from above to \eqref{eq:es121}, one obtains the following estimate:
	\begin{align*}
	 &\mathcal{L}^n\left(E_{\lambda,\varepsilon} \cap B_r(x)\right) \\&\le \varepsilon^{\gamma_0\frac{1}{\Theta}}\lambda^{-\gamma_0}r^n \left[C\left( \frac{|\mu_k|(B_{10r}(x_3))}{r^{n-1}}\right)^{\frac{1}{p-1}}\right.\\& \left.+C\frac{|\mu_k|(B_{10r}(x_3))}{r^{n-1}} \left(\fint_{B_{10r}(x_3)}{|\nabla u_k|^{\gamma_0}dx} \right)^{\frac{2-p}{\gamma_0}} \right]^{\gamma_0}	 \\ &+ C\varepsilon^{\gamma_0\frac{1}{\Theta}}\lambda^{-\gamma_0}\int_{B_{10r}(x_3)}{|\nabla u - \nabla u_k|^{\gamma_0}dx} \\
	 &+ C \varepsilon\lambda^{-\Theta}r^n\left[C\left(\fint_{B_{10r}(x_3)}{|\nabla u_k|^{\gamma_0}dx} \right)^{\frac{1}{\gamma_0}} + C\left( \frac{|\mu_k|(B_{10r}(x_3))}{r^{n-1}} \right)^{\frac{1}{p-1}} \right.\\&\left.~~~~+ C\frac{|\mu_k|(B_{10r}(x_3))}{r^{n-1}} \left(\fint_{B_{10r}(x_3)}{|\nabla u_k|^{\gamma_0}dx} \right)^{\frac{2-p}{\gamma_0}} \right]^{\Theta}.
	 \end{align*}
	Letting $k\to \infty$, we can assert that:
	\begin{align*}
&\mathcal{L}^n\left(E_{\lambda,\varepsilon} \cap B_r(x)\right) \\&\le C\varepsilon^{\gamma_0\frac{1}{\Theta}}\lambda^{-\gamma_0}r^n \left[\left( \frac{|\mu|(\overline{B_{10r}(x_3)})}{r^{n-1}}\right)^{\frac{1}{p-1}} \right.\\& \left.+\frac{|\mu|(\overline{B_{10r}(x_3)})}{r^{n-1}} \left(\fint_{B_{10r}(x_3)}{|\nabla u|^{\gamma_0}dx} \right)^{\frac{2-p}{\gamma_0}} \right]^{\gamma_0} \\&+ C\varepsilon\lambda^{-\Theta}r^n\left[\left(\fint_{B_{10r}(x_3)}{|\nabla u|^{\gamma_0}dx} \right)^{\frac{1}{\gamma_0}} + \left( \frac{|\mu|(\overline{B_{10r}(x_3)})}{r^{n-1}} \right)^{\frac{1}{p-1}}\right.\\&\left.~~~~+\frac{|\mu|(\overline{B_{10r}(x_3)})}{r^{n-1}} \left(\fint_{B_{10r}(x_3)}{|\nabla u|^{\gamma_0}dx} \right)^{\frac{2-p}{\gamma_0}} \right]^{\Theta}.
	\end{align*}
	For given $x_1, x_2$ in the previous case and the definition of $x_3$, since $\text{dist}(x,\Omega) \le 4r$, we can easily check that these following bounds:
	\begin{align*}
	\overline{B_{10r}(x_3)} &\subset \overline{B_{14r}(x)}\subset B_{15r}(x_1)\subset D_2,\\
	\overline{B_{10r}(x_3)} &\subset \overline{B_{14r}(x)}\subset B_{15r}(x_2)\subset D_2,
	\end{align*}
	and the following estimates
	\begin{align*}
	\frac{|\mu|(\overline{B_{10r}(x_3)})}{r^{n-1}} &\le \frac{|\mu|(B_{15r}(x_2))}{r^{n-1}} \le 15^{n-1}\mathbf{M}_1(\chi_{D_2}\mu)(x_2)
	\end{align*}
	hold. On the other hand, as $|x_3-x|=\text{dist}(x,\partial\Omega)$, one obtains
	\begin{align}
	\begin{split}
	\left(\fint_{B_{10r}(x_3)}|\nabla u|^{\gamma_0}dx\right)^{\frac{1}{\gamma_0}} &\leq \left( \frac{|B_{15}(0)|}{|B_{10}(0)|} 	\fint_{B_{15r}(x_1)}|\nabla u|^{\gamma_0}dx\right)^{\frac{1}{\gamma_0}}\\&\leq C\left( \sup_{\rho>0} \fint_{B_{\rho}(x_1)}\chi_{D_2}|\nabla u|^{\gamma_0}dx\right)^{\frac{1}{\gamma_0}}
	\\&= C\left(\mathbf{M}\left(\chi_{D_2}|\nabla u|^{\gamma_0}\right)(x_1)\right)^{\frac{1}{\gamma_0}}.
	\end{split}
	\end{align}
	Combining these above estimates together, one finally concludes that 
	$$\mathcal{L}^n\left(E_{\lambda,\varepsilon}\cap B_r(x)\right) \le Cr^n.$$ 
	According to Lemma \ref{lem:mainlem} for $E = E_{\lambda,\varepsilon}$, $F = F_\lambda$, the proof of Theorem \ref{theo:lambda_estimate} is complete and we will refer to this result in the sequel.
\end{proof}\\

Now, let us give a brief proof of Theorem \ref{theo:bst}.

\begin{proof}[Proof of Theorem \ref{theo:bst}]	
Let $0<\rho<T_0$ and $x_0$ be fixed in $\Omega$. We first apply Theorem \ref{theo:lambda_estimate} with $R = \rho$ and the corresponding sets $D_1=B_\rho(x_0)$, $D_2 = B_{10\rho}(x_0)$, there exist $\Theta = \Theta(n,p,\alpha,\beta,c_0)>p$ and a constant $C = C(n,p,\alpha,\beta,c_0,T_0/r_0)>0$ such that the following estimate holds 	
 		\begin{align}
 	&\mathcal{L}^n\left(\left\{({\bf M}(\chi_{B_{10\rho}(x_0)}|\nabla u|^{\gamma_0}))^{1/\gamma_0}>\varepsilon^{-\frac{1}{\Theta}}\lambda, (\mathbf{M}_1(\chi_{B_{10\rho}(x_0)}\mu))^{\frac{1}{p-1}}\le \varepsilon^{\frac{1}{(p-1)\gamma_0}}\lambda \right\}\cap B_{\rho}(x_0) \right) \nonumber\\
 	&\qquad\leq C \varepsilon  \mathcal{L}^n\left(\left\{ ({\bf M}(\chi_{B_{10\rho}(x_0)}|\nabla u|^{\gamma_0}))^{1/\gamma_0}> \lambda\right\}\cap B_{\rho}(x_0)\right),
 	\end{align}
for any $\lambda>\lambda_0(\varepsilon), \varepsilon\in (0,1)$, and for some $\gamma_0 \in \left(\frac{2-p}{2},\frac{(p-1)n}{n-1} \right)$, where $$\lambda_0 = \varepsilon^{-\frac{1}{(p-1)\gamma_0}}\|\nabla u\|_{L^{\gamma_0}(B_{10\rho}(x_0))}\rho^{-\frac{n}{\gamma_0}}.$$
 	Thus, for all $\lambda>\lambda_0(\varepsilon)$ it gives
 	\begin{align}\label{eq:esp}
 	\begin{split}
 	&\mathcal{L}^n\left(\left\{({\bf M}(\chi_{B_{10\rho}(x_0)}|\nabla u|^{\gamma_0}))^{1/\gamma_0}>\varepsilon^{-\frac{1}{\Theta}}\lambda\right\}\cap B_{\rho}(x_0) \right) \\ 
 	&\leq C \varepsilon  \mathcal{L}^n\left(\left\{ ({\bf M}(\chi_{B_{10\rho}(x_0)}|\nabla u|^{\gamma_0}))^{1/\gamma_0}> \lambda\right\}\cap B_{\rho}(x_0)\right)\\
 	&~~~+\mathcal{L}^n\left(\left\{ (\mathbf{M}_1(\chi_{B_{10\rho}(x_0)}\mu))^{\frac{1}{p-1}}> \varepsilon^{\frac{1}{(p-1)\gamma_0}}\lambda \right\}\cap B_{\rho}(x_0) \right).
 	\end{split}
 	\end{align}
 	It is necessary to estimate $({\bf M}(\chi_{B_{10\rho}(x_0)}|\nabla u|^{\gamma_0}))^{1/\gamma_0}$ in $L^{q,s}(B_{\rho}(x_0))$ for $0<q<\Theta$ and $0<s<\infty$. 	Firstly, let us rewrite the Lorentz norm as
 	\begin{align}\label{eq:tp1}
 	\begin{split}
 &	\|({\bf M}(\chi_{B_{10\rho}(x_0)}|\nabla u|^{\gamma_0}))^{1/\gamma_0}\|_{L^{q,s}(B_{\rho}(x_0))}^s\\&=q\int_{0}^{\infty}\lambda^{s-1}\mathcal{L}^n\left(\left\{({\bf M}(\chi_{B_{10\rho}(x_0)}|\nabla u|^{\gamma_0}))^{1/\gamma_0}>\lambda\right\}\cap B_{\rho}(x_0)\right)^{s/q} d\lambda
 \\&=\varepsilon^{-\frac{s}{\Theta}}q\int_{0}^{\infty}\lambda^{s-1}\mathcal{L}^n\left(\left\{({\bf M}(\chi_{B_{10\rho}(x_0)}|\nabla u|^{\gamma_0}))^{1/\gamma_0}>\varepsilon^{-\frac{1}{\Theta}}\lambda\right\}\cap B_{\rho}(x_0)\right)^{s/q}  d\lambda.
 \end{split}
 \end{align}
 Since the estimate \eqref{eq:esp} only holds for all $\lambda>\lambda_0(\varepsilon)$, one splits the integral into the sum of integrals on $(0,\lambda_0)$ and $(\lambda_0, \infty)$ to get
 \begin{align}\label{eq:tp2}
& \int_{0}^{\infty}\lambda^{s-1}\mathcal{L}^n\left(\left\{({\bf M}(\chi_{B_{10\rho}(x_0)}|\nabla u|^{\gamma_0}))^{1/\gamma_0}>\varepsilon^{-\frac{1}{\Theta}}\lambda\right\}\cap B_{\rho}(x_0)\right)^{s/q}  d\lambda \nonumber \\&=\int_{0}^{\lambda_0}\lambda^{s-1}\mathcal{L}^n\left(\left\{({\bf M}(\chi_{B_{10\rho}(x_0)}|\nabla u|^{\gamma_0}))^{1/\gamma_0}>\varepsilon^{-\frac{1}{\Theta}}\lambda\right\}\cap B_{\rho}(x_0)\right)^{s/q}  d\lambda \nonumber \\&~~~+\int_{\lambda_0}^{\infty}\lambda^{s-1}\mathcal{L}^n\left(\left\{({\bf M}(\chi_{B_{10\rho}(x_0)}|\nabla u|^{\gamma_0}))^{1/\gamma_0}>\varepsilon^{-\frac{1}{\Theta}}\lambda\right\}\cap B_{\rho}(x_0)\right)^{s/q}  d\lambda.
 \end{align}
 According to \eqref{eq:tp1} and \eqref{eq:tp2}, it finds:
 \begin{align*}
 &	\|({\bf M}(\chi_{B_{10\rho}(x_0)}|\nabla u|^{\gamma_0}))^{1/\gamma_0}\|_{L^{q,s}(B_{\rho}(x_0))}^s\\
 &\leq C\varepsilon^{-\frac{s}{\Theta}}\lambda_0^{s}\mathcal{L}^n\left( B_{\rho}(x_0)\right)^{s/q}\\
 & ~~~+ C\varepsilon^{-\frac{s}{\Theta}+\frac{s}{q}}\int_{\lambda_0}^{\infty}\lambda^{s-1}\mathcal{L}^n\left(\left\{({\bf M}(\chi_{B_{10\rho}(x_0)}|\nabla u|^{\gamma_0}))^{1/\gamma_0}>\lambda\right\}\cap B_{\rho}(x_0)\right)^{s/q}  d\lambda \\
 & ~~~+ C\varepsilon^{-\frac{s}{\Theta}}\int_{\lambda_0}^{\infty}\lambda^{s-1}\mathcal{L}^n\left(\left\{ (\mathbf{M}_1(\chi_{B_{10\rho}(x_0)}\mu))^{\frac{1}{p-1}}> \varepsilon^{\frac{1}{(p-1)\gamma_0}}\lambda \right\}\cap B_{\rho}(x_0) \right)^{s/q}  d\lambda.
 \end{align*}
 And thus, we check at once that
 	\begin{align*}
 &	\|({\bf M}(\chi_{B_{10\rho}(x_0)}|\nabla u|^{\gamma_0}))^{1/\gamma_0}\|_{L^{q,s}(B_{\rho}(x_0))}^s
 \\
 &\leq C\varepsilon^{-\frac{s}{\Theta}}\lambda_0^{s}\rho^{ns/q}+ C\varepsilon^{-\frac{s}{\Theta}+\frac{s}{q}} 	\|({\bf M}(\chi_{B_{10\rho}(x_0)}|\nabla u|^{\gamma_0}))^{1/\gamma_0}\|_{L^{q,s}(B_{\rho}(x_0))}^s\\
 & ~~~+ C\varepsilon^{-\frac{s}{\Theta}-\frac{s}{(p-1)\gamma_0}}\int_{0}^{\infty}\lambda^{s-1}\mathcal{L}^n\left(\left\{ (\mathbf{M}_1(\chi_{B_{10\rho}(x_0)}\mu))^{\frac{1}{p-1}}> \lambda \right\}\cap B_{\rho}(x_0) \right)^{s/q}  d\lambda
  \\
  &= C\varepsilon^{-\frac{s}{\Theta}}\lambda_0^{s}\rho^{ns/q}+ C\varepsilon^{-\frac{s}{\Theta}+\frac{s}{q}} 	\|{\bf M}(\chi_{B_{10\rho}(x_0)}|\nabla u|^{\gamma_0}))^{1/\gamma_0}\|_{L^{q,s}(B_{\rho}(x))}^s \\
  & ~~~+ C\varepsilon^{-\frac{s}{\Theta}-\frac{s}{(p-1)\gamma_0}}\|(\mathbf{M}_1(\chi_{B_{10\rho}(x_0)}\mu))^{\frac{1}{p-1}}\|_{L^{q,s}(B_{\rho}(x_0))}^s.
 \end{align*}
 Then, it gives us the estimate:
	\begin{align}\label{eq:tp3}
	\begin{split}
&	\|({\bf M}(\chi_{B_{10\rho}(x_0)}|\nabla u|^{\gamma_0}))^{1/\gamma_0}\|_{L^{q,s}(B_{\rho}(x_0))}
\\&\leq  C\varepsilon^{-\frac{1}{\Theta}}\lambda_0\rho^{n/q}+ C\varepsilon^{-\frac{1}{\Theta}+\frac{1}{q}} \|({\bf M}(\chi_{B_{10\rho}(x_0)}|\nabla u|^{\gamma_0}))^{1/\gamma_0}\|_{L^{q,s}(B_{\rho}(x_0))}\\& ~~~+ C\varepsilon^{-\frac{1}{\Theta}-\frac{1}{(p-1)\gamma_0}}\|(\mathbf{M}_1(\chi_{B_{10\rho}(x_0)}\mu))^{\frac{1}{p-1}}\|_{L^{q,s}(B_{\rho}(x_0))}.
\end{split}
\end{align} 
 If $q<\Theta$, let us choose $\varepsilon=\varepsilon_0>0$ such that $C\varepsilon_0^{-\frac{1}{\Theta}+\frac{1}{q}} <1/2$ from \eqref{eq:tp3}, then one obtains
 	\begin{align*}
 &	\|({\bf M}(\chi_{B_{10\rho}(x_0)}|\nabla u|^{\gamma_0}))^{1/\gamma_0}\|_{L^{q,s}(B_{\rho}(x_0))}
 \\&\leq  C\|\nabla u\|_{L^{\gamma_0}(B_{10\rho}(x_0))}\rho^{-n\left(\frac{1}{\gamma_0}-\frac{1}{q} \right)}+ C\|(\mathbf{M}_1(\chi_{B_{10\rho}(x_0)}\mu))^{\frac{1}{p-1}}\|_{L^{q,s}(B_{\rho}(x_0))}.
 \end{align*} 
 The application of Lemma \ref{lem:H:Sep26} enables us to obtain
 \begin{align*}
\|\nabla u\|_{L^{\gamma_0}(B_{10\rho}(x_0))}\le C \rho^{\frac{n}{\gamma_0}+\frac{1-\theta}{p-1}} \|\mathbf{M}_{\theta}^{T_0}(|\mu|)\|_{L^\infty(\Omega)}^{\frac{1}{p-1}}.
 \end{align*}
 In consequence, we get
 	\begin{align*}
 &	\|({\bf M}(\chi_{B_{10\rho}(x_0)}|\nabla u|^{\gamma_0}))^{1/\gamma_0}\|_{L^{q,s}(B_\rho(x_0))}
 \\&\leq  C \rho^{-n \left(\frac{1}{\gamma_0}-\frac{1}{q}\right)} \rho^{\frac{n}{\gamma_0}+\frac{1-\theta}{p-1}} \|\mathbf{M}_{\theta}^{T_0}(|\mu|)\|_{L^\infty(\Omega)}^{\frac{1}{p-1}}+ C\|(\mathbf{M}_1(\chi_{B_{10\rho}(x_0)}\mu))^{\frac{1}{p-1}}\|_{L^{q,s}(B_\rho(x_0))}
  \\&=  C\rho^{-\frac{n}{\gamma_0}+\frac{n}{q}+\frac{n}{\gamma_0}+\frac{1-\theta}{p-1}}  \|\mathbf{M}_{\theta}^{T_0}(|\mu|)\|_{L^\infty(\Omega)}^{\frac{1}{p-1}}+ C\|(\mathbf{M}_1(\chi_{B_{10\rho}(x_0)}\mu))^{\frac{1}{p-1}}\|_{L^{q,s}(B_\rho(x_0))}
 \end{align*} 
 that yields
 \begin{align*}
 &	\rho^{-\frac{n}{q}+\frac{\theta-1}{p-1}} \|({\bf M}(\chi_{B_{10\rho}(x_0)}|\nabla u|^{\gamma_0}))^{1/\gamma_0}\|_{L^{q,s}(B_\rho(x_0))}
 \\&\leq  C \|\mathbf{M}_{\theta}^{T_0}(|\mu|)\|_{L^\infty(\Omega)}^{\frac{1}{p-1}}+ C\rho^{-\frac{n}{q}+\frac{\theta-1}{p-1}}\|(\mathbf{M}_1(\chi_{B_{10\rho}(x_0)}\mu))^{\frac{1}{p-1}}\|_{L^{q,s}(B_\rho(x_0))}.
 \end{align*} 
 Finally, by taking the supremum for all $\rho \in (0,T_0)$ and $x_0 \in \Omega$, we conclude the desired result:
	\begin{align*}
&	\sup_{\rho \in (0,T_0),x_0 \in \Omega}\rho^{-\frac{n}{q}+\frac{\theta-1}{p-1}} \|({\bf M}(\chi_{B_{10\rho}(x_0)}|\nabla u|^{\gamma_0}))^{1/\gamma_0}\|_{L^{q,s}(B_\rho(x_0))}
\\&\leq  C \|\mathbf{M}_{\theta}^{T_0}(|\mu|)\|_{L^\infty(\Omega)}^{\frac{1}{p-1}}+ \sup_{\rho \in (0,T_0),x_0 \in \Omega}C\rho^{-\frac{n}{q}+\frac{\theta-1}{p-1}}\|(\mathbf{M}_1(\chi_{B_{10\rho}(x_0)}\mu))^{\frac{1}{p-1}}\|_{L^{q,s}(B_\rho(x_0))}.
\end{align*} 
\end{proof}

\begin{proof}[Proof of Theorem \ref{theo:P}]	

Let $0<\rho<T_0$ and $x_0$ be fixed in $\Omega$. From what has already been proved in Theorem \ref{theo:bst} and the definition of Lorentz-Morrey norm \eqref{eq:LMsp} it gets
\begin{align*}
&\|({\bf M}(\chi_{B_{10\rho}(x_0)}|\nabla u|^{\gamma_0}))^{1/\gamma_0}\|_{L^{q,s;\frac{q(\theta-1)}{p-1}}(B_\rho(x_0))}
 \\&\leq  C \|\mathbf{M}_{\theta}^{T_0}(|\mu|)\|_{L^\infty(\Omega)}^{\frac{1}{p-1}}+ \sup_{\rho \in (0,T_0),x_0 \in \Omega}C\rho^{-\frac{n}{q}+\frac{\theta-1}{p-1}}\|(\mathbf{M}_1(\chi_{B_{10\rho}(x_0)}\mu))^{\frac{1}{p-1}}\|_{L^{q,s}(B_\rho(x_0))},
 \end{align*}
that yields
\begin{align}\label{eq:CM}
\begin{split}
\|\nabla u\|_{L^{q,s;\frac{q(\theta-1)}{p-1}}(\Omega)} &\leq C \|\mathbf{M}_{\theta}^{T_0}(|\mu|)\|_{L^\infty(\Omega)}^{\frac{1}{p-1}}  \\ &+C \sup_{\rho \in (0,T_0),x_0 \in \Omega}\rho^{-\frac{n}{q}+\frac{\theta-1}{p-1}}\|(\mathbf{M}_1(\chi_{B_{10\rho}(x_0)}\mu))^{\frac{1}{p-1}}\|_{L^{q,s}(B_\rho(x_0))}.
\end{split}
\end{align}
In order to get the gradient estimate of solution in Lorentz-Morrey spaces, it is sufficient to show that each term on the right side of \eqref{eq:CM} is bounded by the Lorentz-Morrey norm of measure data $\mu$. In particular, it is sufficient to show that
\begin{align}
\label{eq:CM1}
\|\mathbf{M}_{\theta}^{T_0}(|\mu|)\|_{L^\infty(\Omega)} \le C_1 \||\mu|^{\frac{1}{p-1}}\|_{L^{\frac{q(\theta-1)}{\theta}, \frac{s(\theta-1)}{\theta};\frac{q(\theta-1)}{p-1}}(\Omega)}^{p-1}
\end{align}
and
\begin{align}
\label{eq:CM2}
\begin{split}
\sup_{\rho \in (0,T_0),x_0 \in \Omega} &\rho^{-\frac{n(p-1)}{q}+\theta-1}\|(\mathbf{M}_1(\chi_{B_{10\rho}(x_0)}\mu))\|_{L^{\frac{q}{p-1},\frac{s}{p-1}}(B_\rho(x_0))}  \\ 
& \quad ~~~\le C_2 \||\mu|^{\frac{1}{p-1}}\|_{L^{\frac{q(\theta-1)}{\theta}, \frac{s(\theta-1)}{\theta};\frac{q(\theta-1)}{p-1}}(\Omega)}^{p-1}.
\end{split}
\end{align}
hold. First, we can proceed to the proof of \eqref{eq:CM1}. For $0<\rho<T_0$ and $x_0 \in \Omega$ we have
\begin{align*}
\||\mu|^{\frac{1}{p-1}}\|_{L^{\frac{q(\theta-1)}{\theta}, \frac{s(\theta-1)}{\theta};\frac{q(\theta-1)}{p-1}}(\Omega)}^{p-1} & = \|\mu\|_{L^{\frac{q(\theta-1)}{\theta(p-1)}, \frac{s(\theta-1)}{\theta(p-1)};\frac{q(\theta-1)}{p-1}}(\Omega)} \\
&\ge \|\mu\|_{L^{\frac{q(\theta-1)}{\theta(p-1)}, \frac{s(\theta-1)}{\theta(p-1)};\frac{q(\theta-1)}{p-1}}(B_{10\rho}(x_0))} \\ 
&\ge\|\mu\|_{L^{\frac{q(\theta-1)}{\theta(p-1)}, \infty;\frac{q(\theta-1)}{p-1}}(B_{10\rho}(x_0))}\\
&\ge C\rho^{\frac{\frac{q(\theta-1)}{p-1}-n}{\frac{q(\theta-1)}{\theta(p-1)}}} \left[\mathcal{L}^n\left(B_{10\rho}(x_0) \right) \right]^{-1+\frac{1}{\frac{q(\theta-1)}{\theta(p-1)}}} |\mu|\left(B_{10\rho}(x_0) \right)\\
&= C \rho^{\theta-\frac{n\theta(p-1)}{q(\theta-1)}} \rho^{-n+\frac{n}{\frac{q(\theta-1)}{\theta(p-1)}}}|\mu|\left(B_{10\rho}(x_0) \right) \\ 
&= C\frac{|\mu|\left(B_{10\rho}(x_0) \right)}{\rho^{n-\theta}},
\end{align*}
that leads to our desired result in \eqref{eq:CM1} by taking the supremum both side for all $0<\rho<T_0$ and $x_0 \in \Omega$, where it follows the definition of $\mathbf{M}_{\theta}^{T_0}(|\mu|)$ in \eqref{eq:MaT}.

On the other hand, one refers to \cite[Theorem 1.1]{55Ph1} to get that for any $x \in 10B_0 = B_{10\rho}(x_0)$ one obtains:
\begin{align}\label{eq:bs1}
\mathbf{M}_1\left(\chi_{10B_0}|\mu| \right)(x)  \le C\left[\mathbf{M}\left( \chi_{10B_0}|\mu|\right)(x) \right]^{1-\frac{1}{\theta}} \|\mu\|^{\frac{1}{\theta}}_{L^{\frac{q(\theta-1)}{\theta(p-1)}, \frac{s(\theta-1)}{\theta(p-1)};\frac{q(\theta-1)}{p-1}}(10B_0)},
\end{align}
and this can be used to give the estimate in \eqref{eq:CM2}. Indeed, 
\begin{align*}
& \|\mathbf{M}_1\left(\chi_{10B_0}|\mu| \right)\|_{L^{\frac{q}{p-1},\frac{s}{p-1}}(10B_0)} \\
& \quad\le C\|[\mathbf{M}\left(\chi_{10B_0}|\mu| \right)]^{\frac{\theta-1}{\theta}}\|_{L^{\frac{q}{p-1},\frac{s}{p-1}}(10B_0)} \|\mu\|^{\frac{1}{\theta}}_{L^{\frac{q(\theta-1)}{\theta(p-1)}, \frac{s(\theta-1)}{\theta(p-1)};\frac{q(\theta-1)}{p-1}}(10B_0)}\\
& \quad \le C\|\mathbf{M}\left(\chi_{10B_0}|\mu| \right)\|^{\frac{\theta-1}{\theta}}_{L^{\frac{q(\theta-1)}{\theta(p-1)}, \frac{s(\theta-1)}{\theta(p-1)}}(10B_0)}\|\mu\|^{\frac{1}{\theta}}_{L^{\frac{q(\theta-1)}{\theta(p-1)}, \frac{s(\theta-1)}{\theta(p-1)};\frac{q(\theta-1)}{p-1}}(10B_0)}.
\end{align*}
According to the boundedness property of maximal function $\mathbf{M}$ in Remark \ref{rem:boundMlorentz}, it finds:
\begin{align*}
\begin{split}
 \|\mathbf{M}_1\left(\chi_{10B_0}|\mu| \right)\|&_{L^{\frac{q}{p-1},\frac{s}{p-1}}(10B_0)} \\
& \quad\le C \|\mu\|_{L^{\frac{q(\theta-1)}{\theta(p-1)}, \frac{s(\theta-1)}{\theta(p-1)}}(10B_0)}^{\frac{\theta-1}{\theta}}\|\mu\|^{\frac{1}{\theta}}_{L^{\frac{q(\theta-1)}{\theta(p-1)}, \frac{s(\theta-1)}{\theta(p-1)};\frac{q(\theta-1)}{p-1}}(10B_0)},
\end{split}
\end{align*}
and from the definition of Lorentz-Morrey norm \eqref{eq:LMsp}, it follows that:
\begin{align}\label{eq:bs2}
\begin{split}
 \|\mathbf{M}_1\left(\chi_{10B_0}|\mu| \right)\|_{L^{\frac{q}{p-1},\frac{s}{p-1}}(10B_0)}
& \le C \rho^{\frac{n-\frac{q(\theta-1)}{p-1}}{\frac{q(\theta-1)}{\theta(p-1)}} . \frac{\theta-1}{\theta}} \|\mu\|_{L^{\frac{q(\theta-1)}{\theta(p-1)}, \frac{s(\theta-1)}{\theta(p-1)};\frac{q(\theta-1)}{p-1}}(10B_0)}\\
&  \le C \rho^{\frac{n(p-1)}{q}-\theta+1}  \|\mu\|_{L^{\frac{q(\theta-1)}{\theta(p-1)}, \frac{s(\theta-1)}{\theta(p-1)};\frac{q(\theta-1)}{p-1}}(\Omega)}.
\end{split}
\end{align}
Multiplying both sides of \eqref{eq:bs2} by $\rho^{\frac{-n(p-1)}{q}+\theta-1}$, we turn to
\begin{align*}
\begin{split}
\rho^{\frac{-n(p-1)}{q}+\theta-1}  \|\mathbf{M}_1\left(\chi_{10B_0}|\mu| \right)\|_{L^{\frac{q}{p-1},\frac{s}{p-1}}(10B_0)} 
& \le C \|\mu\|_{L^{\frac{q(\theta-1)}{\theta(p-1)}, \frac{s(\theta-1)}{\theta(p-1)};\frac{q(\theta-1)}{p-1}}(\Omega)}\\
&  = C \||\mu|^{\frac{1}{p-1}}\|_{L^{\frac{q(\theta-1)}{\theta}, \frac{s(\theta-1)}{\theta};\frac{q(\theta-1)}{p-1}}(\Omega)}^{p-1},
\end{split}
\end{align*}
and the proof of \eqref{eq:CM2} is complete.
\end{proof}

\subsection*{Acknowledgments} The author T.N. Nguyen was supported by Ho Chi Minh City University of Education under grant No. B2017-SPS-12.

\end{document}